\newcommand{\cal}[1]{\mathcal{#1}}
\theoremstyle{plain}
\newtheorem{theorem}{Theorem}
\newtheorem{coro}[theorem]{Corollary}
\newtheorem{prop}[theorem]{Proposition}
\newtheorem{lemma}{Lemma}[section]
\newtheorem{theo}[lemma]{Theorem}
\newtheorem{proposition}[lemma]{Proposition}
\newtheorem{corollary}[lemma]{Corollary}
\theoremstyle{definition}
\newtheorem{defi}[theorem] {Definition}
\newtheorem{definition}[lemma]{Definition}
\newtheorem{remark}[lemma]{Remark}
\newtheorem{example}[lemma]{Example}
\newtheorem*{conjecture}{Conjecture}
\newtheorem*{question}{Question}
\let\egthree=\phi
\let\phi=\varphi
\let\varphi=\egthree
\begin{document}
\title[Boundary]
{An ${\cal E\cal Z}$-structure for the mapping class group}
\author{Ursula Hamenst\"adt}
\thanks
{AMS subject classification: 20F65}
\date{July 4, 2026}

\begin{abstract}
We construct a boundary for 
the mapping class group ${\rm Mod}(S)$
of a surface $S$ of finite type. The action of ${\rm Mod}(S)$ on this 
boundary is
minimal, strongly proximal and topologically free. 
The boundary is the boundary of an ${\cal E\cal Z}$-structure for 
${\rm Mod}(S)$. 
\end{abstract}

\maketitle


\section{Introduction}

The \emph{mapping class group} ${\rm Mod}(S)$ of
a closed oriented surface $S$ 
of genus $g\geq 0$ from which $m\geq 0$ points have been removed 
and so that $3g-3+m\geq 1$ 
is the group of isotopy classes of diffeomorphisms 
of $S$. 
The mapping class group is well known to be finitely presented, and
it admits explicit torsion free finite index subgroups.

A torsion free finite index subgroup $\Gamma$ of ${\rm Mod}(S)$ 
admits a \emph{finite} classifying space. Such a classifying space
can be constructed as follows.

Since the Euler characteristic of $S$ is negative, 
the \emph{Teichm\"uller space} ${\cal T}(S)$ of $S$ of all
\emph{marked} finite area complete hyperbolic structures on $S$ is defined. 
By elementary hyperbolic geometry, there exists a
number $\epsilon_0>0$ such that any two closed geodesics
on a hyperbolic surface of length at most $\epsilon_0$ are disjoint.
The \emph{systole} ${\rm systole}(X)$ of a hyperbolic metric $X$ is the length of 
a shortest closed geodesic. For $\epsilon <\epsilon_0$ define the \emph{$\epsilon$-thick part}
${\cal T}_\epsilon(S)$ of Teichm\"uller space as 
\[{\cal T}_\epsilon(S)=\{X\in {\cal T}(S)\mid
 {\rm systole}(X)\geq \epsilon\}.\]
The following is due to Ji and Wolpert \cite{JW10}, extending an earlier result of
Ivanov \cite{Iv02}, see Proposition 3.1 and Theorem 3.9
of \cite{J14} for an explicit statement.

\begin{theorem}[Ji-Wolpert]
For sufficiently small $\epsilon <\epsilon_0$, the set  
${\cal T}_\epsilon(S)$ is a manifold with corners which is
  a deformation retract of ${\cal T}(S)$. The mapping class
  group ${\rm Mod}(S)$ acts on ${\cal T}_\epsilon(S)$
properly and cocompactly.  
\end{theorem}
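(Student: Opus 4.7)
The plan is to prove the three assertions (corner structure, deformation retract, proper cocompact action) separately, leaning on standard tools from Teichm\"uller theory: the Collar Lemma, Fenchel-Nielsen coordinates, and Mumford's compactness criterion.

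\textbf{Step 1 (Manifold with corners).} For $\epsilon<\epsilon_0$ and $X\in {\cal T}(S)$, let $\sigma(X)$ denote the set of closed geodesics on $X$ of length at most $\epsilon_0$. The Collar Lemma guarantees that the components of $\sigma(X)$ are pairwise disjoint, hence form a multicurve. I would extend $\sigma(X)$ to a pants decomposition $P$ and work in the associated Fenchel-Nielsen chart; the length functions $\ell_c$ for $c\in P$ are smooth coordinate functions on an open neighborhood of $X$, so in particular the differentials $d\ell_c$ ($c\in\sigma(X)$) are linearly independent. Since ${\cal T}_\epsilon(S)$ is locally cut out by the inequalities $\ell_c\geq\epsilon$ for those finitely many $c\in\sigma(X)$ whose length is near $\epsilon$, this gives a manifold-with-corners structure, with codimension-$k$ corners parametrized by multicurves of $k$ components.

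\textbf{Step 2 (Deformation retraction).} The idea is to construct an isotopy that pushes the thin part into the thick part by increasing the Fenchel-Nielsen length coordinates of short curves while leaving the twists and the complementary coordinates unchanged. Concretely, given a point $X$ with thin multicurve $\sigma(X)=\{c_1,\dots,c_k\}$, in Fenchel-Nielsen coordinates based on any pants decomposition refining $\sigma(X)$ I would flow along the vector field $\sum_i \chi(\ell_{c_i})\,\partial/\partial\ell_{c_i}$, where $\chi$ is a smooth non-negative cutoff supported on $[0,\epsilon]$ and positive on $[0,\epsilon)$. The key technical point is that the resulting local flows are independent of the pants decomposition used to refine $\sigma(X)$ (because the directions $\partial/\partial \ell_{c_i}$ are canonical once $\sigma(X)$ is fixed) and they depend only on geometrically defined data, so they automatically glue across the strata on which the multicurve $\sigma(X)$ jumps, and they are ${\rm Mod}(S)$-equivariant. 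Running this flow until every $\ell_{c_i}$ reaches $\epsilon$ produces the desired retraction.

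\textbf{Step 3 (Proper cocompact action).} Properness is inherited from the proper action of ${\rm Mod}(S)$ on all of ${\cal T}(S)$, because ${\cal T}_\epsilon(S)$ is closed and ${\rm Mod}(S)$-invariant. For cocompactness I would invoke Mumford's compactness theorem: the systole is a proper function on the moduli space, so ${\cal T}_\epsilon(S)/{\rm Mod}(S)=\{{\rm systole}\geq\epsilon\}\subset {\cal M}(S)$ is compact.

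\textbf{Main obstacle.} Steps 1 and 3 are essentially standard applications of well-known tools. The real work is in Step 2: verifying that the locally defined flow really is canonically determined across the transitions where $\sigma(X)$ changes (in particular, that no previously thick curve becomes thin while flowing, which follows from the fact that only $\ell_{c_i}$ changes and everything else is frozen, combined with a Collar Lemma argument) and that the resulting retraction is continuous and terminates in finite time on compact subsets. Making the flow smooth rather than merely continuous at the corner strata, and showing it respects the manifold-with-corners structure, would be the delicate technical point.
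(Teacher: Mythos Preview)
The paper does not prove this theorem; it is quoted from Ji and Wolpert \cite{JW10,J14} and used as a black box. The only hint the paper gives about the argument is in the proof of Proposition~\ref{homeomorphism}, where it records that Ji--Wolpert construct a smooth ${\rm Mod}(S)$-equivariant vector field on ${\cal T}(S)\setminus {\cal T}_\epsilon(S)$, vanishing on ${\cal T}_\epsilon(S)$ and nowhere else, whose flow retracts ${\cal T}(S)$ onto ${\cal T}_\epsilon(S)$. Your overall strategy---corner structure from independence of $d\ell_c$ in Fenchel--Nielsen coordinates, retraction via a flow increasing short lengths, cocompactness via Mumford---is exactly the shape of that argument, and Steps~1 and~3 are fine as written.

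There is, however, a genuine gap in Step~2. The vector field $\partial/\partial\ell_{c_i}$ in Fenchel--Nielsen coordinates is \emph{not} canonical once $\sigma(X)$ is fixed: it depends on the full pants decomposition $P$ extending $\sigma(X)$, because ``hold all other Fenchel--Nielsen coordinates constant'' means different things for different $P$. (Contrast with the twist field $\partial/\partial\tau_{c_i}$, which \emph{is} intrinsic---it is the earthquake along $c_i$.) So your locally defined flows do not automatically agree on overlaps, and the gluing you assert does not follow. The Ji--Wolpert construction gets around this by using a genuinely intrinsic vector field---built from Weil--Petersson gradients of the length functions, with a careful analysis (due to Wolpert) of how $\nabla^{WP}\ell_c$ interacts with the lengths of other curves, ensuring equivariance and that no thick curve becomes thin along the flow. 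Your identification of this as ``the delicate technical point'' is right, but the specific fix you propose does not work; you would need to replace $\partial/\partial\ell_{c_i}$ by a coordinate-free direction.
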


Since ${\cal T}(S)$ is homeomorphic to $\mathbb{R}^{6g-6+2m}$, we obtain that
${\cal T}_\epsilon(S)$ is contractible, locally
contractible and finite dimensional. 
As torsion free finite index subgroups $\Gamma$ of ${\rm Mod}(S)$
act freely on ${\cal T}_\epsilon(S)$, this implies that
$\Gamma\backslash {\cal T}_\epsilon(S)$ is a finite 
classifying space for $\Gamma$. In particular, $\Gamma$ is of type $F$. 

Motivated by the construction of the Borel-Serre bordification of an arithmetic
group which can be used to compute its \emph{virtual cohomological dimension},
that is, the cohomological dimension of a torsion free finite index subgroup,  
Harer \cite{Har86} initiated the construction of a bordification of ${\cal T}_\epsilon(S)$ 
which computes the virtual cohomological dimension 
${\rm vcd}({\rm Mod}(S))$ of the mapping class group. 
This program was completed by Ivanov \cite{Iv02} and consists in 
attaching to ${\cal T}_\epsilon(S)$ the \emph{curve complex} as an analog of a spherical
building.  
The bordification, which indeed computes the virtual cohomological dimension of 
${\rm Mod}(S)$, has the homotopy type of an infinite wedge of spheres
\cite{IJ08} and does not compactify the space ${\cal T}_\epsilon(S)$. 

In the setting of hyperbolic groups $\Gamma$, 
it turns out that the \emph{Gromov boundary} of $\Gamma$  
can be used to compute the virtual cohomological 
dimension \cite{BM91}. This Gromov boundary is the 
boundary of a compactification
of $\Gamma$ in the following sense.

\begin{defi}[Small boundary]\label{boundary}
  A \emph{boundary} of a finitely generated group $\Gamma$
  is a compact $\Gamma$-space 
  $Z$ with the following properties. 
\begin{itemize}
\item
There exists a topology on $\Gamma\cup Z$ which restricts to the
discrete topology on $\Gamma$, to the given topology on $Z$ and is such that
$\Gamma\cup Z$ is compact.
\item 
 The left action of $\Gamma$ on itself extends to 
the $\Gamma$-action on $Z$. 
\end{itemize}
The boundary is called \emph{small}
if the right action of $\Gamma$ extends to the trivial action of $\Gamma$ on $Z$.
\end{defi}

The Gromov boundary of a hyperbolic group has additional desirable properties. 
One of these is captured in the following definition, which 
is Lemma 1.3 of  \cite{B96} and Definition 1.1 of \cite{FL05}. Here the notion ${\cal E\cal Z}$-structure
stems from the idea of compactifying a classifying space for proper actions of a not necessarily
torsion free group $\Gamma$. That the Gromov boundary of a torsion free hyperbolic group defines
an ${\cal E\cal Z}$-structure for the group was established by Bestvina and Mess \cite{BM91}.

\begin{defi}[${\cal E\cal Z}$-structure]\label{ez}
  An \emph{${\cal E\cal Z}$-structure} for a finitely generated torsion free group $\Gamma$ 
    consists of a pair $(\overline{X},Z)$
    of finite dimensional compact metrizable spaces, with $Z$ nowhere
    dense in $\overline{X}$, and the 
 following additional properties.
    \begin{enumerate}
    \item $X=\overline{X}-Z$ is contractible and locally contractible.
    \item For every $z\in Z$ and every neighborhood
      $U$ of $z$ in $\overline{X}$ there exists a neighborhood
      $V\subset U$ of $z$ such that the inclusion $V-Z\to U-Z$
      is null-homotopic. 
    \item $X$ admits a covering space action of $\Gamma$ with compact
      quotient.
    \item The collection of all translates of a compact set in $X$
      form a null sequence in $\overline{X}$: that is,
      for every open cover ${\cal U}$ of $\overline{X}$,
      all but finitely many translates are \emph{${\cal U}$-small}, which means that
      they are entirely contained in one of the sets from the covering ${\cal U}$.  
    \item The action of $\Gamma$ on $X$ extends to an action on $\overline{X}$. 
\end{enumerate}
    \end{defi}  

If $\Gamma$ is a group with torsion, we define an ${\cal E\cal Z}$-structure for $\Gamma$ 
as a pair $(\overline{X},Z)$ as in Definition \ref{ez}, but where property (3) is replaced by 
property 
\begin{enumerate}
\item[($3^\prime$)]  $X$ admits a properly discontinuous action of $\Gamma$ with compact
      quotient.
      \end{enumerate}
Note that if $\Gamma$ is torsion free, then ($3^\prime$) is equivalent to (3).

The significance of an ${\cal E\cal Z}$-structure $(\overline{X},Z)$ for a torsion free 
group $\Gamma$ lies in the fact that the \v{C}ech cohomology of the space $Z$ 
computes the cohomological dimension ${\rm cd}(\Gamma)$ 
of the group, with a dimension shift of one
(Theorem 1.7 of \cite{B96}). Thus an ${\cal E\cal Z}$-structure for a 
finitely generated group which admits a torsion free finite index subgroup 
computes its virtual cohomological dimension. 
Furthermore, torsion free groups with an ${\cal E\cal Z}$-structure
admit an ${\cal E\cal Z}$-structure of the form $(\mathbb{D}^n,\Delta)$ where 
$\Delta$ is a closed subset of $\partial \mathbb{D}^n=S^{n-1}$, and 
the Novikov conjecture 
holds for these groups (Theorem 1.1 and Theorem 1.2 of \cite{FL05}). 
Theorem 1.3 of \cite{FL05} shows that it also implies a partial result
towards the Farell Jones conjecture.

An action of a group $G$ on a compact topological space $Z$ is called 
\emph{minimal} if every $G$-orbit is dense. It is called 
\emph{topologically free} if for every $\phi\in G-\{1\}$ the fixed point set of $\phi$
has empty interior. Furthermore, it is called \emph{strongly proximal} if
the action of $G$ on the space of Borel probability measures on $Z$ is such that the closure
of any orbit contains some Dirac measure. Compact $G$-spaces on which the 
$G$-action is minimal and strongly proximal were introduced by Furstenberg and 
are commonly called \emph{$G$-boundaries}, using a slightly misleading terminology. They have
many applications, in particular towards rigidity questions. For example, the existence
of a $G$-boundary on which the $G$-action is in addition topological free implies
$C^*$-simplicity of $G$ (which is known for the mapping class group), see  \cite{BKKO17} for this result and
a more comprehensive discussion of further applications and references. The Gromov boundary of a 
non-elementary hyperbolic group
has all these properties, but there are many other examples of groups to which these constructions
apply, including
lattices in semisimple Lie groups of non-compact type. 
The following is our main result.

\begin{theorem}\label{main}
There exists a compactification $\overline{\cal T}(S)$ of 
${\cal T}_\epsilon(S)$ with the following properties.
\begin{enumerate}
    \item ${\cal X}(S)=\overline{\cal T}(S)\setminus {\cal T}_\epsilon(S)$ is
  a small boundary for ${\rm Mod}(S)$.
  \item The action of
  ${\rm Mod}(S)$ on ${\cal X}(S)$ is minimal, strongly proximal
  and topologically free. 
  \item The pair $(\overline{\cal T}(S),{\cal X}(S))$ is an 
  ${\cal E\cal Z}$-structure for ${\rm Mod}(S)$.
\end{enumerate}
\end{theorem}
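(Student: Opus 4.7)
The plan is to first specify the compactification $\overline{\cal T}(S)$, then verify the three conclusions in order. A natural candidate for ${\cal X}(S)$ combines Thurston-type projective measured foliation data with combinatorial information from curve graphs of subsurfaces, tailored so that the enlargement of ${\cal T}_\epsilon(S)$ remains locally well-behaved. Concretely, I would describe a point in ${\cal X}(S)$ as a compatible system consisting of a simplex $\sigma$ of short curves together with projective measured foliations or ending laminations on the complementary subsurfaces of $\sigma$, glued by Masur--Minsky subsurface projection data. A sequence $X_n \in {\cal T}_\epsilon(S)$ is declared to converge to such a boundary point if the collection of short curves stabilizes on $\sigma$ and the conformal data on each complementary subsurface converges in the respective boundary. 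Continuity of the ${\rm Mod}(S)$-action on $\overline{\cal T}(S)$ is then built into the definition.

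For part (1), fix a basepoint $X_0 \in {\cal T}_\epsilon(S)$ and identify $g \in {\rm Mod}(S)$ with $gX_0$. Declare $g_n \to z \in {\cal X}(S)$ iff $g_n X_0 \to z$ in $\overline{\cal T}(S)$. Compactness of ${\rm Mod}(S) \cup {\cal X}(S)$ reduces to compactness of $\overline{\cal T}(S)$ combined with the observation that every divergent orbit sequence has a subsequence converging to a boundary point. Smallness amounts to checking that bounded perturbations $g_n \mapsto g_nh$ yield the same limit in ${\cal X}(S)$; this holds because $\dist(g_n X_0, g_n h X_0) = \dist(X_0, hX_0)$ is uniformly bounded and the compactification is constructed so that the boundary is \emph{visual}, in the sense that bounded perturbations in Teichm\"uller distance do not affect boundary limits.

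For part (2), minimality and strong proximality exploit the north--south dynamics of pseudo-Anosov elements on the enriched boundary: the attracting fixed point of a pseudo-Anosov $\phi$ lies in ${\cal X}(S)$ as a full ending lamination, the orbit under pseudo-Anosovs with distinct attractors is dense in ${\cal X}(S)$, and Dirac measures at attractors arise as weak limits of $\phi^n_*\mu$ for any probability measure $\mu$ not concentrated on the repelling point. Topological freeness is proved by showing that if $\phi \in {\rm Mod}(S)$ fixes an open subset of ${\cal X}(S)$, then by density $\phi$ fixes sufficient combinatorial data (for instance, uncountably many ending laminations or projective measured foliations on subsurfaces) to force $\phi = 1$ via an Ivanov-style rigidity argument.

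For part (3), axioms (1) and (3) of the $Z$-structure follow from Ji--Wolpert. Axiom (4), the null-sequence condition, reduces to properness and cocompactness of the ${\rm Mod}(S)$-action on ${\cal T}_\epsilon(S)$ together with the property, built into the compactification, that translates of a compact set shrink toward the boundary as the group element escapes. The main obstacle is axiom (2), the local null-homotopy property of $\overline{\cal T}(S)$ at boundary points: one must produce, for every $z \in {\cal X}(S)$ and every neighborhood $U$ of $z$, a smaller neighborhood $V \subset U$ such that $V \setminus {\cal X}(S)$ is null-homotopic in $U \setminus {\cal X}(S)$. The Thurston compactification famously fails this axiom because the boundary links non-trivially with the thin parts along simplices of measured foliations; the whole point of stratifying ${\cal X}(S)$ by short-curve simplices and enriching it with subsurface data is to produce a basis of neighborhoods with contractible cross-sections. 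I expect verifying this step to require the most technical work, presumably using a product-like local structure coming from a Margulis-type decomposition and an induction on the complexity of $S$.
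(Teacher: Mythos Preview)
Your outline captures the overall architecture of the argument, and several of your instincts are right: smallness does come down to basepoint independence, strong proximality does come from pseudo-Anosov north--south dynamics, and the local contractibility axiom for the $Z$-structure is indeed the technical crux, handled by an induction on complexity via a stratification of Teichm\"uller space. However, your description of the boundary ${\cal X}(S)$ itself is both too vague and, where it is specific, organized along the wrong axis.

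You propose that a boundary point records ``a simplex $\sigma$ of short curves together with projective measured foliations or ending laminations on the complementary subsurfaces of $\sigma$''. The paper's boundary is not stratified by multicurves of short curves; it is stratified by collections of pairwise disjoint essential subsurfaces $S_1,\dots,S_k$ (annuli allowed), and a point is a formal convex combination $\sum_i a_i\xi_i$ with $\xi_i\in\partial{\cal C\cal G}(S_i)$ and $a_i>0$. In particular, a point of $\partial{\cal C\cal G}(S)$ itself is a boundary point with no associated multicurve at all, so ``the collection of short curves stabilizes on $\sigma$'' cannot be the convergence criterion. Projective measured foliations play no role in the definition of ${\cal X}(S)$; only the topological supports of laminations (points of Gromov boundaries of curve graphs) and the weights $a_i$ appear.

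The weights are not decoration: they encode the \emph{relative speeds} of divergence in the curve graphs of the $S_i$, and this is what makes the topology work. Convergence $X_j\to\sum_i a_i\xi_i$ requires that ${\rm pr}_{S_i}(X_j)\to\xi_i$ in ${\cal C\cal G}(S_i)\cup\partial{\cal C\cal G}(S_i)$, that the ratios $d_{{\cal C\cal G}(S_i)}({\rm pr}_{S_i}(X_j),c_i)/d_{{\cal C\cal G}(S_1)}({\rm pr}_{S_1}(X_j),c_1)$ converge to $a_i/a_1$, and that subsurface projections into every surface disjoint from the $S_i$ grow sublinearly by comparison. Without this ratio condition you will not get a Hausdorff compactification (different relative speeds must land at different boundary points), and without it the null-sequence axiom and the contractible-neighborhood construction have nothing to grab onto. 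Your phrase ``glued by Masur--Minsky subsurface projection data'' gestures toward this but does not pin it down.

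For the contractibility of neighborhood cross-sections, your proposed ``Margulis-type decomposition and induction on complexity'' is in the right spirit, but the paper's mechanism is more specific: for $\xi\in\partial{\cal C\cal G}(S)$ one takes cones over contractible neighborhoods of the simplex $P(\xi)\subset{\cal P\cal M\cal L}$ via the Teichm\"uller exponential map; for general $\xi$ one first handles the product case ${\cal T}(S_1)\times\cdots\times{\cal T}(S_k)$ using a reparameterization of Teichm\"uller rays by curve-graph progress, and then passes from strata of augmented Teichm\"uller space up to ${\cal T}(S)$ one pinched curve at a time using transversal Teichm\"uller geodesics. The contractibility at each stage is genuinely delicate and uses that the Teichm\"uller exponential map is a homeomorphism, not just an abstract product structure.
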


We call the space ${\cal X}(S)$ the \emph{geometric boundary} of 
${\rm Mod}(S)$. 

By the main results of \cite{FL05}, Theorem \ref{main} 
implies the following.

 \begin{coro}\label{farelljones}
Any torsion free finite index subgroup of ${\rm Mod}(S)$ satisfies the Novikov conjecture.
\end{coro}

The statement in the corollary was earlier 
established with different methods, see 
\cite{H09,K10,BBF15,BaB19}. 
The Farell Jones conjecture for mapping class groups, which implies the Novikov conjecture but
can not be deduced from Theorem \ref{main},  is due to 
Bartels and Bestvina \cite{BaB19}.

An alternative approach to the construction of an ${\cal E\cal Z}$-structure 
for the mapping class group, based on \emph{hierarchical hyperbolicity}, 
is due to Durham, Minsky and Sisto
\cite{DMS25}. They construct an ${\cal E\cal Z}$-structure for such a group 
as an abstract boundary from a property 
called asymptotically ${\rm CAT}(0)$ which is extracted from the 
combinatorial properties of hierarchically hyperbolic spaces. 
Earlier Durham, Hagen and Sisto \cite{DHS17} constructed
a boundary for hierarchially hyperbolic groups in the sense of Definition \ref{boundary}. 
In the case of the mapping class group, this boundary can be identified with 
the boundary constructed in Theorem \ref{main} as a set, however the topology 
is different. There are open sets in the boundary of \cite{DHS17} which do not contain
any open subset of the boundary we construct. 
We do not have information on the relation to the construction in 
\cite{DMS25}. Note to this
end that an  ${\cal E\cal Z}$-structure for a discrete group is by no means unique. 
Hierarchical hyperbolicity for ${\rm Mod}(S)$ only appears indirectly in this
article, but our approach shares with \cite{DMS25} the strategy to view 
the mapping class group as a  ${\rm CAT}(0)$-space on the large scale.

 As the virtual cohomological dimension 
${\rm vcd}({\rm Mod}(S))$ equals 
$4g-5$ if $g\geq 2$ and 
$m=0$,  $4g-4+m$ if $g\geq 1$ and $m-3$ if $g=0$ 
\cite{Har86}, the covering dimension of the space ${\cal X}(S)$ equals 
$4g-6$ if $g\geq 2$ and
$m=0$, $4g-5+m$ if $g\geq 1$ and $m>0$, and $m-4$ if $g=0$ \cite{B96}.
Note that for any torsion free finite index subgroup $\Gamma$ of ${\rm Mod}(S)$, the cohomology group
$H^{{\rm vcd}({\rm Mod}(S))}(\Gamma, \mathbb{Z}\Gamma)$ identifies with the 
$2g-2+m$-th homology group of the curve complex.
Since the curve complex has the homotopy type of an infinite wedge of spheres of dimension
$2g-2+m$ (Theorem 1.4 of \cite{IJ08}), this implies that 
the top dimensional \v{C}ech  cohomology group of ${\cal X}(S)$ is also 
infinite dimensional by Proposition 1.5 of \cite{B96}. 

Theorem \ref{main} can be viewed as giving some evidence that the 
\emph{asymptotic dimension} of ${\rm Mod}(S)$, which is known to be finite and 
at most quadratic in the virtual cohomological dimension, in fact equals the 
virtual cohomological dimension
of ${\rm Mod}(S)$. We refer to \cite{BB19} for a more detailed discussion on this and related
questions and results.

The following is a consequence of Theorem \ref{main} and 
Theorem 1.1 and Theorem 1.2 of \cite{FL05}. In its formulation
we denote by $\mathbb{D}^n$ the standard ball in $\mathbb{R}^{n}$.

\begin{coro}\label{concrete}
If $3g-3+m\geq 3$ then 
there exists a closed subset $\Delta$ of $S^{6g-5+2m}$ such that 
${\rm Mod}(S)$ admits an ${\cal E\cal Z}$-structure of the form 
$(\mathbb{D}^{6g-4+2m},\Delta)$. 
\end{coro}

\subsection*{Description of  
the boundary ${\cal X}(S)$ of ${\rm Mod}(S)$.}

The curve complex ${\cal C\cal G}(S_0)$ 
of a (not necessarily proper) essential subsurface 
$S_0$ of $S$ different from a pair of pants or an annulus
is the simplicial complex whose vertices are
isotopy classes of simple closed curves and where $k$ such curves span 
a $k-1$-simplex if they can be realized disjointly. 
If $S_0$ is a four-holed sphere or a one holed torus, then
this definition has to be modified by connecting two vertices by an edge
if they intersect in the minimal number of points. 

The curve complex, equipped with the natural simplicial metric,
is a hyperbolic geodesic metric space of infinite diameter \cite{MM99}. 
Its \emph{Gromov boundary} $\partial {\cal C\cal G}(S_0)$ is 
the space of
\emph{minimal geodesic laminations} on $S_0$ which \emph{fill} $S_0$,
that is, which intersect every essential simple closed curve on $S_0$ 
transversely. The topology on $\partial {\cal C\cal G}(S_0)$ is the 
\emph{coarse Hausdorff topology}. With respect to this topology, a sequence
$\lambda_i$ of minimal filling laminations converges to the lamination $\lambda$
if and only if the limit of any subsequence which converges
in the Hausdorff topology on compact subsets of $S_0$ 
contains $\lambda$ as a sublamination \cite{H06,K99}. The 
space $\partial {\cal C\cal G}(S_0)$ is separable and metrizable. 
Define the boundary of the curve complex of an essential annulus $A\subset S$ 
with core curve $c$ to consist of two points $c^+,c^-$.

If $S_1,\dots,S_k$ is a collection of isotopy classes of pairwise disjoint subsurfaces of $S$,
then we can form the join 
\[{\cal J}(\cup_{i=1}^kS_i)=\partial {\cal C\cal G}(S_1)* \cdots * \partial {\cal C\cal G}(S_k).\]
It  can be viewed as the set of formal sums $\sum_ia_i\lambda_i$ where
$a_i>0$, $\sum_ia_i=1$ and where $\lambda_i\in \partial {\cal C\cal G}(S_i)$ for all $i$.
This join is  a separable metrizable topological space.  
Note that if $S_{i_1},\dots,S_{i_s}$ is a subset of the set of surfaces 
$S_1,\dots,S_k$, then ${\cal J}(\cup_{j=1}^sS_{i_j})$ is naturally a 
non-empty closed subset of 
${\cal J}(\cup_{i=1}^kS_i)$ corresponding to formal sums $\sum_ia_i\lambda_i$ with $a_i=0$ for 
$i\not\in \{i_1,\dots,i_s\}$.
Define
\[{\cal X}(S)=
\cup {\cal J}(\cup_{i=1}^kS_i)\]
where the union is over all collections of pairwise disjoint essential subsurfaces
of $S$ and we use the obvious identification of points which arise in more than one 
way in this union. 
Here we view an essential annulus $A$ as an essential subsurface which is
disjoint from any subsurface which can be moved off $A$ by an isotopy.  
Thus ${\cal X}(S)$ is just the set of 
formal sums $\sum_ia_i\lambda_i$ where $a_i>0,\sum_ia_i=1$, where 
$\lambda_1,\dots,\lambda_k$ are pairwise disjoint
minimal geodesic laminations on $S$ and where each simple closed curve
component $\lambda_i$ is equipped with an additional label $+,-$. 
The mapping class group 
acts naturally on ${\cal X}(S)$ as a \emph{set}.

The topology on the space ${\cal X}(S)$ can be described as follows. 
Each of the subspaces ${\cal J}(\cup_{i=1}^kS_i)$, equipped with its topology as
a join of metric spaces, 
can be thought of 
as the visual boundary of the product of the hyperbolic metric spaces 
${\cal C\cal G}(S_i)$, where the coefficients in a sum $\sum_i a_i\lambda_i$
capture the relative speed in the factors with which a geodesic in the product
converges to a boundary point. It is embedded in ${\cal X}(S)$ as a topological space, which means
that the inclusion ${\cal J}(\cup_iS_i)\to {\cal X}(S)$ 
is a homeomorphism onto its image,
equipped with the subspace topology.

These countably many subspaces are glued together by first 
performing the obvious identifications and then using the Hausdorff topology
on the space of geodesic laminations on $S$ to determine which points in distinct
of the spaces ${\cal J}(\cup_iS_i)$ are close to each other to construct a neighborhood 
basis of every point.  
The idea is that as convergence in the 
coarse Hausdorff topology to a point $\xi\in \partial {\cal C\cal G}(S)$ describes convergence
of a sequence in the metric space ${\cal C\cal G}(S)$ to $\xi$, closeness in 
${\cal X}(S)$ of two points $\sum_i a_i\xi_i$ and $\sum_j b_j\zeta_j$ in two distinct of the subspaces 
${\cal J}(\cup_i S_i)$ 
is captured by closeness in the coarse Hausdorff topology of the laminations $\cup_i \xi_i$ and 
$\cup_j \zeta_j$, with a careful bookkeeping of the role of the coefficients in the joins 
that mimics what 
one would observe in a ${\rm CAT}(0)$-space containing two distinct convex subspaces of infinite diameter. 
Note that we use here specific properties of surfaces which have no analog in the context
of hierarchically hyperbolic spaces, although it seems possible that the construction of 
the topology of ${\cal X}(S)$ can be carried out in this more general context as well, using 
combinatorial tools.

The following result summarizes 
some technical properties of the geometric boundary. For its formulation,
let us invoke the Nielsen Thurston classification which states that
any nontrivial mapping class has
a finite power $\phi$ with the following property. There exists a decomposition 
$S=S_1\cup \cdots \cup S_k$ of $S$ into subsurfaces that are preserved by 
$\phi$ and such that for all $i<k$, the surface $S_i$ is connected and 
the restriction of $\phi$ to $S_i$ is pseudo-Anosov 
if $S_i$ is not an annulus, and it is a Dehn twist if $S_i$ is an annulus.
The restriction of $\phi$ to $S_k$ is trivial. We call a mapping class 
with this property a \emph{Nielsen Thurston mapping class}. 

Let $\phi$ be a Nielsen Thurston mapping class. For each $i<k$ such that
$S_i$ is not an annulus, the
restriction $\phi_i$ of $\phi$ to $S_i$ preserves precisely two geodesic laminations 
$\xi_i^{\pm}$ which are 
the attracting and repelling laminations of $\phi_i$. Similarly, for any component $S_i$ which 
is an annulus, the two labeled copies $\xi_i^{\pm}$ 
of the core curve of the annulus are preserved as well. 
Thus $\phi$ fixes any point of the 
form $\sum_ia_i\zeta_i$ where $\zeta_i$ is one of the laminations $\xi_i^{\pm}$ if $i<k$ and 
where $\zeta_k$ is an arbitrary point of the geometric boundary of the
(possibly disconnected)
surface $S_k$. We call points of this form the \emph{obvious fixed point set}.

\begin{prop}\label{details}
Let ${\cal X}(S)$ be the geometric boundary of ${\rm Mod}(S)$.
\begin{enumerate}
\item 
For any collection $S_1,\dots,S_k$ of pairwise disjoint subsurfaces
of $S$, the inclusion 
${\cal J}(\cup_{i=1}^kS_i)\to {\cal X}(S)$ is an embedding. 
In particular, 
the covering dimension of $\partial {\cal C\cal G}(S)$ is at most 
${\rm vcd}({\rm Mod}(S))-1$. 
\item The fixed point set for the action of a Nielsen Thurston mapping class 
$\phi$ on 
${\cal X}(S)$ is precisely the obvious fixed point set of $\phi$. 
\end{enumerate}
\end{prop}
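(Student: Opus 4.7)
The plan is to treat the two parts independently. Part (1) is a topological comparison: the natural map ${\cal J}(\cup_{i=1}^k S_i) \to {\cal X}(S)$ is injective by the construction of ${\cal X}(S)$ as a union of joins with the obvious identifications, so what remains is to compare the join topology (coefficients continuous, laminations continuous in the coarse Hausdorff topology on each $\partial{\cal C\cal G}(S_i)$ with positive coefficient) with the subspace topology from $\overline{\cal T}(S)$. I would verify both directions of continuity from the explicit construction of $\overline{\cal T}(S)$ developed earlier in the paper: the easier direction builds, for each sequence convergent in the join topology, a lift to ${\cal T}_\epsilon(S)$ accumulating at the expected limit in $\overline{\cal T}(S)$; the harder direction, that convergence in $\overline{\cal T}(S)$ of a sequence already sitting in ${\cal J}(\cup S_i)$ to a point of ${\cal J}(\cup S_i)$ forces subsurface-by-subsurface convergence of both coefficients and laminations, will presumably require subsurface projections or extremal-length-type control on how each $S_i$ detaches from ${\cal T}_\epsilon(S)$. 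The covering dimension bound then follows by specializing to $k=1$ and $S_1=S$: $\partial{\cal C\cal G}(S)={\cal J}(S)$ embeds as a closed subset of ${\cal X}(S)$, and combining Theorem \ref{main} with Bestvina's dimension formula (Theorem 1.7 of \cite{B96}) gives $\dim{\cal X}(S)={\rm vcd}({\rm Mod}(S))-1$, from which monotonicity of covering dimension under closed embeddings yields the claim.

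For part (2), let $y=\sum_j b_j \mu_j$ be fixed by $\phi$, with the $\mu_j$ pairwise disjoint minimal geodesic laminations, $b_j>0$ and $\sum b_j=1$. The canonical form of the sum forces $\phi$ to induce a weight-preserving permutation of $\{\mu_j\}$. For each $j$ there is a power $\phi^n$ fixing $\mu_j$, and $\phi^n$ is Nielsen-Thurston with the same decomposition and the same attracting and repelling laminations $\xi_i^\pm$ as $\phi$. Let $T_j$ be the smallest essential subsurface carrying $\mu_j$; it is preserved by $\phi^n$, hence by $\phi$ itself, since the reducing system is canonical. Now I run a case analysis on which Nielsen-Thurston piece contains $T_j$. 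If $T_j\subset S_i$ for some $i<k$ with $S_i$ non-annular, then the pseudo-Anosov $\phi|_{S_i}$ preserves no proper essential subsurface, forcing $T_j=S_i$, and the only $\phi|_{S_i}$-invariant minimal filling laminations of $S_i$ are $\xi_i^\pm$; both are individually fixed by $\phi$. If $S_i$ is annular, $T_j$ equals that annulus and $\mu_j$ is the core curve, the two sign labels corresponding to $\xi_i^\pm$, both fixed by the Dehn twist $\phi|_{S_i}$. If $T_j\subset S_k$, then $\phi|_{S_k}=\mathrm{id}$, so $\mu_j$ is automatically fixed and may be arbitrary. Thus each $\mu_j$ is individually $\phi$-fixed, the permutation was trivial, and grouping terms according to the piece containing them writes $y$ in the form $\sum_i a_i\zeta_i$ of the obvious fixed point set. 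The reverse inclusion is immediate from the definition of $\xi_i^\pm$.

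The main obstacle is part (1): the topological identification of ${\cal J}(\cup S_i)$ inside $\overline{\cal T}(S)$ with its intrinsic join topology. This is not a soft topological claim and must exploit the quantitative features of the construction of $\overline{\cal T}(S)$ developed elsewhere in the paper, in particular how the compactification separately encodes each invariant subsurface $S_i$ and its boundary lamination data. Part (2), by contrast, is essentially a direct consequence of Nielsen-Thurston theory and the uniqueness of invariant filling laminations for a pseudo-Anosov, once the set-level description of ${\cal X}(S)$ given before the proposition is in hand.
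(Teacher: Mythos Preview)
Your approach to part (1) differs from the paper's in an important structural way. The paper does \emph{not} define the topology on ${\cal X}(S)$ as the subspace topology inherited from $\overline{\cal T}(S)$; rather, it first constructs a topology ${\cal O}$ on ${\cal X}(S)$ intrinsically, via explicit Requirements 1--3 for when a sequence in ${\cal X}(S)$ converges to a point of ${\cal X}(S)$ (these use only coarse Hausdorff convergence of supports and subsurface projections of laminations, with no reference to Teichm\"uller space), and only afterwards extends this to $\overline{\cal T}(S)$. The embedding ${\cal J}(\cup_i S_i)\hookrightarrow ({\cal X}(S),{\cal O})$ is established directly in Proposition~\ref{top2}(2) by checking that a sequence in ${\cal J}(\cup_i S_i)$ satisfies Requirements 1--3 for a limit $\zeta\in{\cal J}(\cup_i S_i)$ if and only if it converges to $\zeta$ in the join topology; both directions are short once the requirements are unpacked, and no lift to ${\cal T}_\epsilon(S)$ or extremal-length control is needed. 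Your route through $\overline{\cal T}(S)$ could in principle be made to work, but it is circuitous, and what you anticipate needing ``subsurface projections or extremal-length-type control'' for in the harder direction is precisely what Requirement~3 already encodes by fiat in the intrinsic definition. Your derivation of the covering-dimension bound from the ${\cal Z}$-structure and \cite{B96} matches the paper (Corollary~\ref{covering}).

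Your argument for part (2) is essentially sound (the paper does not spell out a detailed proof beyond the dynamics sketched in Example~\ref{limit}), but there is a gap in the case analysis. You assume without justification that $T_j$, the subsurface filled by $\mu_j$, lies inside a single Nielsen--Thurston piece $S_i$; a priori $\mu_j$ could cross a reducing curve. The fix is short: if $\mu_j$ has essential intersection with a pseudo-Anosov or Dehn-twist piece $S_i$ without being contained in it, then the subsurface projection of $\mu_j$ into $S_i$ is a curve or arc fixed by $\phi^n\vert_{S_i}$, which is impossible. Also, the clause ``hence by $\phi$ itself, since the reducing system is canonical'' is not the right reason $\phi$ (rather than only $\phi^n$) preserves $T_j$; the correct justification emerges from the case analysis once you know $T_j$ equals some $S_i$ with $i<k$ or lies in $S_k$.
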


That the covering dimension of $\partial {\cal C\cal G}(S)$ 
is bounded from above by ${\rm vcd}({\rm Mod}(S))$ 
 is due to 
Gabai (Proposition 16.3 of \cite{Ga14}). In view of the expectation that the asymptotic 
dimension of the mapping class group equals its virtual cohomological dimension, and 
that this dimension is captured by the covering dimension of the Gromov boundary of the 
curve graph paralleling the result of Harer \cite{Har86}, 
we expect that the bound we find in part (1) of 
Proposition \ref{details} is sharp. We are not aware of any lower bound 
available in the literature.

In \cite{BB19}, Bestvina and Bromberg converted Gabai's upper bound on the covering dimension of 
$\partial {\cal C\cal G}(S)$ into the same upper bound for its capacity dimension. We do not 
know whether this can also be achieved for the bound we find.

The Gromov boundary of the curve complex of $S$
can be obtained from 
the space  ${\cal F\cal M\cal L}\subset 
{\cal P\cal M\cal L}=S^{6g-7+2m}$ of projective measured geodesic laminations with 
minimal filling support, equipped with the weak$^*$-topology, by an equivariant continuous surjective map
${\cal F\cal M\cal L}\to \partial {\cal C\cal G}(S)$. This map is however not injective and the
following statement, which can be deduced from Theorem \ref{main}, Proposition \ref{details} and the 
results in \cite{FL05}, requires a proof. 

\begin{coro}\label{bdcurvegr}
The boundary $\partial {\cal C\cal G}(S)$ of the curve complex of $S$ 
admits an embedding into a manifold of dimension $6g-6+2m$ and into
$S^{6g-5+2m}$.
\end{coro}

Our construction is valid for the mapping class group of a once punctured torus or a 
four punctured sphere. In this case the mapping class group is virtually free and,
in particular, it is a hyperbolic group whose Gromov boundary is a Cantor set. 
The boundary we find is the Gromov boundary
of the group.

The construction of the boundary ${\cal X}(S)$ is motivated by the construction of the 
visual boundary of a ${\rm CAT}(0)$-space. 
Along the way we identify in Section \ref{titsbd} an analog of the familiar
Tits boundary of a symmetric space of higher rank. Although this has no obvious application, 
it draws a parallel between the mapping class group and a 
lattice in a higher rank symmetric space and connects to Harer's partial bordification of 
${\rm Mod}(S)$.

The advantage of our construction of a geometric 
boundary ${\cal X}(S)$ is that as a topological ${\rm Mod}(S)$-space,  
it is completely explicit
and can be used among others to study subgroups of ${\rm Mod}(S)$.

\subsection*{Overview of the article} In the first  part of the article, 
contained in Sections 2-3, we
define a topology on the \emph{set} ${\cal X}(S)$ which is invariant
under the action of ${\rm Mod}(S)$.
We also observe that
the action of ${\rm Mod}(S)$ on ${\cal X}(S)$ is strongly proximal. 
The more preliminary Section \ref{titsbd} introduces the 
\emph{oriented curve complex} and shows that it can be viewed
as a Tits type boundary for the mapping class group. It seems likely that 
the constructions and results in this first part carry over to colorable hierarchically
hyperbolic groups, however carrying this program out would add an extra
layer of notations and combinatorial discussions.

The second part of this article is devoted to 
showing that this topology
extends to ${\cal T}_\epsilon(S)\cup {\cal X}(S)$ and defines a compactification
of ${\cal T}_\epsilon(S)$ in such a way that every point in ${\cal X}(S)$ has a neighborbood
basis consisting of sets whose intersections with ${\cal T}_\epsilon(S)$ are contractible.
This is carried out in Section \ref{small} and Section \ref{sec:neighborhoodbasis}
and is the most involved part of this article.  We use the augmented Teichm\"uller
space as a witness of ${\rm CAT}(0)$ properties, thus mainly relying on geometry rather
than combinatorics. 
The proof of Theorem \ref{main} and the corollaries is completed in
Section \ref{Sec:metrizability}.

\noindent
{\bf Acknowledgement:} I am grateful to Alessandro Sisto for informing me about the article
\cite{DMS25}. Special thanks go to the referee for their careful reading and 
the many excellent suggestions for improving the 
exposition. 
This work was partially supported by 
the Hausdorff Center Bonn. The article was put into the current form while the author visited the Newton Institute in 
Cambridge during the program Operators, Graphs, Groups in summer 2025.

\section{The Tits boundary of ${\rm Mod}(S)$} \label{titsbd}

The \emph{join} $X_1*X_2$ of two topological spaces 
$X_1,X_2$ is defined to be the quotient $X_1\times X_2\times [0,1]/\sim$ where
the equivalence relation $\sim$ collapses $X_1\times X_2\times \{0\}$ to
$X_1$ and collapses $X_1\times X_2\times \{1\}$ to $X_2$. 
For example, the join $S_1^0*S_2^0$ of two $0$-spheres is the circle $S^1$, 
thought of as a union of four intervals glued at the endpoints, where each interval
has one endpoint in $S_1^0$ and the second endpoint in $S_2^0$. 
The join of two spaces 
$X_1,X_2$ contains an embedded copy of $X_1,X_2$. 
 
\begin{example}\label{boundaryjoin}
The product of two hyperbolic planes $\mathbb{H}^2\times \mathbb{H}^2$ is a 
complete simply connected Riemannian manifold of non-positive curvature. 
Its \emph{visual boundary} is the join $S^1*S^1$ of two circles that are
the Gromov boundaries of the embedded copies $\mathbb{H}^2\times \{y\}$ and 
$\{x\}\times \mathbb{H}^2$ 
of $\mathbb{H}^2$. This corresponds
to the fact that the projection of any geodesic in $\mathbb{H}^2\times \mathbb{H}^2$
to each of the two factors is a geodesic. Note that the join of two circles
is homeomorphic to $S^3$. \hfill $\blacksquare$
\end{example}

Define the \emph{oriented curve complex}
${\cal O\cal G}(S)$ of 
an oriented connected surface $S$ of genus $g$ with $m$ punctures and
$3g-3+m\geq 2$  to be 
the complex whose vertices are isotopy classes of essential 
oriented simple closed curves in $S$
and whose one-skeleton consists of 
edges (of length $1$) connecting two vertices if
they can be realized disjointly and are not homotopic up to orientation.
Thus any simple closed curve in $S$ defines two distinct 
vertices in ${\cal O\cal G}(S)$, and
these vertices are not connected by an edge.
Furthermore, we require that any collection of $k\geq 2$ oriented 
disjoint simple closed curves which are distinct as unoriented  curves 
span a simplex. The union of these simplices
defined by a fixed collection of $k$ 
curves equipped with all combinations of orientations 
is a sphere of dimension $k-1$. If $3g-3+m=1$ then we define the oriented curve
complex in the same way, but defining edges in the one-skeleton by connecting
two vertices if up to homotopy, they intersect transversely in the minimal number of points 
(one for the once punctured torus and two for the four punctured sphere).

Note that a point
in ${\cal O\cal G}(S)$ can be viewed
as a formal linear combination
$\sum_{i=1}^ka_i\lambda_i$ where for some $k\geq 1$, 
$\lambda_1,\dots,\lambda_k$ 
are pairwise disjoint oriented simple closed curves,  
where $a_i>0$ for all $i$ and $\sum_ia_i=1$.  In other words,
a point in the oriented curve complex can be viewed as a point in 
the join of a finite collection of oriented pairwise disjoint simple
closed curves. 
If $S$ is a once punctured torus or a four punctured sphere, 
then the oriented curve complex is defined in the same way
except that two oriented curves are connected by an edge if 
they intersect in the minimal number of points (one for the once punctured
torus and two for the four punctured sphere). 

\begin{remark}\label{spherical}
  If we choose the length of the edges of the oriented
 curve complex 
  to be $\pi/2$, then this is consistent with
the idea that the oriented curve complex can be thought of as
being contained in the Tits boundary of 
${\rm Mod}(S)$, equipped with the angular length metric which 
identifies each sphere with a sphere of constant curvature one. 
Any such sphere will be viewed as the visual boundary of a \emph{Dehn twist flat},
that is, a free abelian subgroup of ${\rm Mod}(S)$ generated by Dehn twists
about a collection of disjoint simple closed curves. \hfill $\blacksquare$
\end{remark}

A simple closed curve $c$ on $S$ is the core curve of an embedded annulus
$A(c)\subset S$. 
Define the "curve graph" ${\cal C\cal G}(A(c))$ 
of the annulus $A(c)$ 
as a graph of isotopy classes
of arcs connecting the two boundary components of $A(c)$ 
and whose endpoints
are allowed to move freely in the complement of a fixed point $p_-,p_+$ on each of the
two boundary circles $\partial_{\pm}A(c)$. 
Two such arcs are connected by an edge if they can be realized disjointly in their homotopy class.
Although this is not the standard definition,
it will be convenient for our purpose. Note that it depends on the choice of 
the points $p_-,p_+$. To apply it to arcs with at least one endpoint at $p_-$ or $p_+$ we 
deform the arc with a small homotopy to a disjoint arc with no endpoint at 
$p_-,p_+$, 
creating a small ambiguity which is unavoidable when one discusses curve graphs of annuli. 

To see that this definition well encodes the action of the infinite cyclic group of Dehn twists of 
$A(c)$ equip the annulus $A(c)$ with an orientation inherited from an orientation
of $S$. Given an arc $\alpha\subset A(c)$ with endpoints in $\partial A(c) \setminus \{p_{\pm}\}$, 
we can slide the endpoint of $\alpha$ on $\partial _+A(c)$ across $p_+$ in positive or 
negative direction, keeping its second endpoint fixed,  
to create a disjoint arc $\alpha^\prime$ which is homotopic in the above sense to 
the image of $\alpha$ under a simple Dehn twists $T_c$ about the core curve $c$. It is
connected to $\alpha$ by an edge in ${\cal C\cal G}(A)$. A repetition of this 
construction gives rise to the arc $T^2_c(\alpha)$ (up to homotopy) which is 
not connected to $\alpha$ by an edge. Thus 
the curve graph ${\cal C\cal G}(A(c))$ is a simplicial line which admits the infinite cyclic 
group of Dehn twists as a vertex transitive group of translations. 
The distinction between
a positive and a negative Dehn twist
about $c$ only depends on the orientation of $S$ but not on the orientation
of $c$. The choice
of an orientation of $c$ can be thought of as
a spiraling direction about $c$ for oriented arcs connecting the two boundary components of 
$A(c)$. 

In the sequel we denote by $c^+$ the point in the Gromov boundary
of ${\cal C\cal G}(A(c))$ (which consists of two points) 
which corresponds to an 
iteration of positive Dehn
twists about $c$, and we denote by $c^-$ the point in
the Gromov boundary of ${\cal C\cal G}(A(c))$ which
corresponds to an  iteration of negative Dehn twists about $c$.
Write ${\cal J}(c)=\{c^+,c^-\}$. It will be convenient to think about 
${\cal J}(c)$ as a set of 
two distinct points in the oriented curve complex of $S$,
with the same underlying curve.

If $S_0$ is a subsurface of $S$ different from a pair of pants or an annulus, 
which can be a surface with boundary and/or punctures, 
then we denote its (non-oriented) curve complex by 
${\cal C\cal G}(S_0)$. 
The vertices of this complex are isotopy classes 
of essential, that is, non-peripheral simple closed curves. 
If $S_0$ is different from a one-holed torus or a 
four-holed sphere, then a collection of $k\geq 2$ such disjoint simple closed
curves span a simplex of dimension $k-1$. If $S_0$ is a one-holed torus or
a four-holed sphere then two simple closed curves are connected by an edge
if they intersect transversely in the minimal number of points. 
The curve complex of $S_0$
is hyperbolic and hence it has a \emph{Gromov boundary}
$\partial {\cal C\cal G}(S_0)$. 
As a set, the Gromov boundary $\partial {\cal C\cal G}(S_0)$
is the set of all minimal filling geodesic laminations on $S_0$.
We refer to \cite{H06} for an account on this result of Klarreich \cite{K99}.

There is a natural metrizable topology on the union 
$\overline{\cal C\cal G}(S_0)$ of ${\cal C\cal G}(S_0)$
with its Gromov boundary, called the \emph{coarse Hausdorff topology}. 
With respect to this topology,
the subspace ${\cal C\cal G}(S_0)$, equipped with its simplicial topology, is an open dense
subset. To define this topology equip the surface $S_0$ with a complete finite volume hyperbolic 
metric with geodesic boundary. This choice defines a Hausdorff topology on the 
space of compact subsets of $S_0$. 
A sequence 
$\lambda_i\subset {\cal C\cal G}(S_0)\subset {\cal C\cal G}(S_0)\cup \partial {\cal C\cal G}(S_0)$ 
of vertices in ${\cal C\cal G}(S_0)$ converges in the coarse Hausdorff 
topology to $\lambda\in \partial {\cal C\cal G}(S_0)$
if and only if the limit of 
any converging subsequence of $\lambda_i$ 
in the Hausdorff topology on compact subsets of $S_0$  contains
$\lambda$ as a sublamination \cite{H06}. 
Define 
\[{\cal J}(S_0)=\partial {\cal C\cal G}(S_0),\] equipped with the 
topology as a subset of $\overline{\cal C\cal G}(S_0)$. 
If $S_0$ is a pair of pants, then we define ${\cal J}(S_0)=\emptyset$.


If $S_1,\dots,S_k$ are \emph{disjoint} connected 
subsurfaces of $S$  (we allow that they share boundary
components, and annuli about such boundary components
may be included in the list), then we define 
\begin{equation}\label{joinformula}
{\cal J}(\cup_iS_i)=\partial {\cal C\cal G}(S_1)*\cdots *\partial {\cal C\cal G}(S_k)\end{equation}
to be the join of the spaces
${\cal J}(S_i)=\partial {\cal C\cal G}(S_i)$.  
For example, if 
$S_1\subset S$ is a subsurface which is the complement of a non-separating simple closed
curve $c$, then 
\[{\cal J}(S_1\cup A(c))=\partial{\cal C\cal G}(S_1)*\{c^+,c^-\}.\] 
A point in ${\cal J}(S_1\cup \cdots \cup S_k)$ can be viewed as a formal linear
combination 
\[\xi=\sum_ia_i \xi_i\] 
where $\xi_i\in \partial {\cal C\cal G}(S_i)$, $a_i\geq 0$ for all $i$ and,
furthermore, $\sum_ia_i=1$.
The union 
\[{\rm supp}(\xi)=\cup_{a_i>0}\xi_i\] is 
a geodesic lamination with minimal components $\xi_i$, and 
$\xi$ can be viewed as a weighted (and partially labeled if there are
simple closed curve components of $\xi$ with positive weight) 
geodesic lamination. 
For all $u\leq k$ there is an inclusion 
${\cal J}(S_1\cup \cdots \cup S_u)\subset {\cal J}(S_1\cup \dots \cup S_k)$
which is a topological embedding.

A collection $S_1,\dots,S_k$ of disjoint connected subsurfaces of $S$ is called \emph{maximal} 
if $S-\cup_iS_i=\emptyset$. By convention, this means that for any  boundary component $c$ of 
one of the surfaces $S_i$, the annulus $A(c)$ is contained in the collection. Any collection 
$S_1,\dots,S_\ell$ 
of disjoint connected subsurfaces of $S$ is contained in a maximal collection of such subsurfaces,
however this maximal collection is  in general not unique. 
For example, there is a canonical
maximal collection containing $S_1,\dots,S_k$ 
which is comprised of the surfaces $S_i$, the annuli 
$A(c)$ where $c$ runs through all boundary 
components of $\cup_iS_i$ which are not already contained in the list,
and all connected components 
of $S-\cup_iS_i$. 

Define 
\begin{equation}\label{exx}
    {\cal X}(S)=\cup {\cal J}(S_1\cup \cdots  \cup S_k)/\sim
\end{equation}
where
the union is over all collections of disjoint subsurfaces $S_1,\dots,S_k$ 
of $S$. 
The equivalence relation 
$\sim$ identifies two points $\sum_ia_i\xi_i$ and 
$\sum_jb_j\zeta_j$ if they coincide as weighted labeled
geodesic laminations.  
Thus a point in ${\cal X}(S)$ is nothing else but 
a formal sum $\sum_{i=1}^k a_i \xi_i$ where
$a_i> 0,\sum_ia_i=1$, where $\xi_1,\dots,\xi_k$ are pairwise
disjoint minimal geodesic laminations on $S$ and where every simple closed
curve component of this collection is in addition equipped with a label $\pm$.
Note that the oriented curve complex ${\cal O\cal G}(S)$
of $S$ can naturally be identified with 
the union of the 
subsets ${\cal J}(A(c_1)\cup \cdots \cup A(c_k))$ of ${\cal X}(S)$, 
and its Gromov boundary (which is just the Gromov boundary 
$\partial {\cal C\cal G}(S)$ of the
non-oriented curve complex of $S$) also is contained in ${\cal X}(S)$.
The mapping class group ${\rm Mod}(S)$ naturally acts on 
the set ${\cal X}(S)$.

\begin{example}\label{oncepuncturedtorus}
The definition (\ref{exx}) makes sense if $S$ is a once
punctured torus or a four punctured sphere.
In this case there are no non-peripheral subsurfaces 
of $S$ different from annuli and pairs of pants, 
and the set ${\cal X}(S)$ is just the union of the Gromov boundary of
the curve graph ${\cal C\cal G}(S)$ with a countable set, consisting of all 
oriented non-peripheral simple closed curves on $S$. 
We discuss the case of the once punctured torus in detail, the case of the 
four punctured sphere is completely analogous.

The curve graph of $S$ is the well-known \emph{Farey graph}. Its vertices can 
be represented by the rational points in the boundary 
$\partial \mathbb{H}^2=\mathbb{R}\cup \{\infty \}$ of the hyperbolic plane. If one represents the 
edges of the Farey graph by geodesics in $\mathbb{H}^2$, then one obtains 
a tesselation of the hyperbolic plane by ideal triangles which is invariant under 
the mapping class group ${\rm PSL}(2,\mathbb{Z})$ of $S$. 
The boundary $\partial T$ of the dual tree $T$ of this tesselation is a Cantor set which admits 
a surjective continuous map onto the boundary $\partial \mathbb{H}^2$ of the hyperbolic plane.
Each irrational point in $\partial \mathbb{H}^2$ corresponds to a point in the Gromov
boundary of ${\cal C\cal G}(S)$ and
has precisely one preimage, and 
the rational points which correspond to the vertices of the curve graph have two preimages. 

The vertices of the Farey graph are also
the fixed points of the parabolic subgroups of 
${\rm PSL}(2,\mathbb{Z})$.
With this interpretation, the set ${\cal X}(S)$ can be identified with the Cantor
set $\partial T$ 
obtained by replacing each rational point in $\mathbb{R}\cup \{\infty\}$ 
by a compact interval and removing 
the interior of the interval. This Cantor set in turn has a natural identification 
with the Gromov boundary $\partial T$ of the virtually free group ${\rm PSL}(2,\mathbb{Z})$.
In particular, there is a natural invariant topology on ${\cal X}(S)$ so that with this
topology, ${\cal X}(S)$ is a compact ${\rm PSL}(2,\mathbb{Z})$-space which contains
the Gromov boundary $\partial {\cal C\cal G}(S)$ of the curve graph of $S$ as 
a dense embedded subset. Furthermore, following \cite{BM91}, 
the set ${\cal X}(S)$ equipped with this topology is the boundary of an ${\cal E\cal Z}$-structure for 
${\rm PSL}(2,\mathbb{Z})$. \hfill $\blacksquare$
\end{example}

\begin{example}\label{joincontained}
Let $S_1,\dots,S_k$ be a disjoint union of subsurfaces of $S$ which are different from 
pairs of pants. Then the join 
${\cal X}(S_1)*\cdots *{\cal X}(S_k)$ is a subset of ${\cal X}(S)$. \hfill $\blacksquare$
\end{example}

The short remainder of this section is not used in the sequel, but was included here
to illustrate the similarities and differences of the geometry of the mapping class group
as encoded in the boundary ${\cal X}(S)$ and classical constructions for Gromov hyperbolic 
spaces and symmetric spaces of higher rank. 

The oriented curve complex of $S$ is connected, and any non-filling
geodesic lamination, that is, a geodesic lamination which is disjoint from
some simple closed curve, is disjoint from some vertex of
${\cal O\cal G}(S)$. Thus if we equip 
${\cal X}(S)\setminus \partial {\cal C\cal G}(S)$ with the topology
of a simplicial complex whose edges are the joins of two disjoint 
(perhaps labeled) geodesic laminations, then this complex
is connected. As a consequence,
the \emph{set} ${\cal X}(S)$ can be equipped with a topology 
which coincides with the topology of a (non-locally finite) simplicial complex
on ${\cal X}(S)\setminus \partial {\cal C\cal G}(S)$ and is such that each point 
in $\partial {\cal C\cal G}(S)$ is isolated. 
 We write ${\cal X}_T(S)$ for ${\cal X}(S)$ equipped with this topology and call 
 ${\cal X}_T(S)$ the \emph{Tits boundary} of ${\rm Mod}(S)$ 
(having the Tits boundary of a ${\rm CAT}(0)$ space as guidance). 
From this description, we obtain

\begin{lemma}\label{mcg}
The mapping class group ${\rm Mod}(S)$ of $S$ acts on ${\cal X}_T(S)$ as a group of 
simplicial automorphisms. 
\end{lemma}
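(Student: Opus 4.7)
The plan is to unpack the definition of the simplicial structure on ${\cal X}_T(S)$ and check that each ingredient is functorial with respect to the natural set-level action of ${\rm Mod}(S)$ on ${\cal X}(S)$. First I would note that any $\phi\in{\rm Mod}(S)$ permutes isotopy classes of essential subsurfaces of $S$ and preserves disjointness: a collection $S_1,\dots,S_k$ of pairwise disjoint connected essential subsurfaces is sent to another such collection $\phi(S_1),\dots,\phi(S_k)$. Secondly, for each non-annular essential subsurface $S_0\subset S$ different from a pair of pants, $\phi$ induces a simplicial isomorphism ${\cal C\cal G}(S_0)\to {\cal C\cal G}(\phi(S_0))$ which, by the Klarreich/Hamenst\"adt description of the Gromov boundary recalled earlier, extends to a homeomorphism $\partial{\cal C\cal G}(S_0)\to\partial{\cal C\cal G}(\phi(S_0))$ with respect to the coarse Hausdorff topology. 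For an annulus $A(c)$, $\phi$ sends $A(c)$ to $A(\phi(c))$ and the labeled boundary points $c^{\pm}$ to $\phi(c)^{\pm}$.

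Next I would combine these factorwise homeomorphisms to obtain a homeomorphism of joins
\[
{\cal J}(S_1\cup\cdots\cup S_k)\;\longrightarrow\; {\cal J}(\phi(S_1)\cup\cdots\cup \phi(S_k))
\]
given on formal sums by $\sum_i a_i\xi_i\mapsto \sum_i a_i\phi(\xi_i)$; see (\ref{joinformula}). These maps are compatible with the canonical embeddings of subjoins ${\cal J}(\cup_{j=1}^sS_{i_j})\hookrightarrow {\cal J}(\cup_{i=1}^k S_i)$ obtained by dropping factors of zero weight, so after passing to the quotient defining ${\cal X}(S)$ in (\ref{exx}) they assemble into a single bijection of ${\cal X}(S)\setminus\partial{\cal C\cal G}(S)$ that carries each closed simplex of the defining simplicial structure homeomorphically to another closed simplex. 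On $\partial{\cal C\cal G}(S)$, the topology of ${\cal X}_T(S)$ is discrete and $\phi$ acts there by a bijection, so there is nothing more to verify; the two pieces combine to a simplicial automorphism of ${\cal X}_T(S)$.

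The one bookkeeping point I expect to be the sole obstacle is the treatment of the $\pm$-labels on annular components: one must check that $\phi$ sends $c^+$ to $\phi(c)^+$ rather than to $\phi(c)^-$. This reduces to the fact, noted just before the statement, that the positive/negative distinction for Dehn twists depends only on the orientation of $S$ and not on a choice of orientation of the core curve, so an orientation-preserving mapping class carries positive Dehn twists to positive Dehn twists and therefore permutes the two ends of each annular curve graph coherently. Every other step is a routine consequence of the functoriality of the curve complex and of its Gromov boundary under simplicial automorphisms.
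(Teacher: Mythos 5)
Your proof is correct and takes essentially the same approach as the paper's, namely checking that the natural action on subsurfaces, curve graphs, boundary laminations, weights, and annular labels is compatible with the simplicial structure defining ${\cal X}_T(S)$; the paper's own argument is a compressed version of exactly this functoriality check. Your extra remark on the $\pm$-labels is a fine observation, but note that for the lemma it is enough that each $\phi$ induces \emph{some} well-defined permutation of $\{c^+,c^-\}$ covering $c\mapsto\phi(c)$ — even an orientation-reversing mapping class, which would swap the labels, still acts by a simplicial automorphism.
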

\begin{proof}
The mapping class group acts on the oriented curve complex of $S$ as a group 
of simplicial automorphisms, and this action extends to an action
on the space of formal sums of weighted disjoint minimal geodesic
laminations preserving weight and disjointness. Furthermore, 
it acts on $\partial {\cal C\cal G}(S)$ as
a group of transformations. 
Since the topology on ${\cal X}_T(S)$ is 
the topology of a disconnected simplicial complex, constructed from the 
curve complexes of subsurfaces, 
the lemma follows. 
\end{proof}

\begin{remark}
The Tits boundary of a ${\rm CAT}(0)$ space $X$ can be viewed as
the geometric boundary (that is, the ${\rm CAT}(0)$ boundary) of $X$,
equipped with a topology which in general is finer than the geometric
topology. We shall see in Section \ref{small} that the same holds true for the
Tits boundary and the geometric boundary of ${\rm Mod}(S)$.
\end{remark}

\section{A topology for ${\cal X}(S)$}\label{sec:topology}

The goal of this section is to equip the set ${\cal X}(S)$ with a topology which is coarser
than the Tits topology so that for this
topology, ${\cal X}(S)$ becomes a compact ${\rm Mod}(S)$-space. 

Let
$\xi^j=\sum_m a_m^j \xi_m^j$ be a sequence in ${\cal X}(S)$.
We shall impose in three steps a requirement for the
sequence to converge to a point 
$\zeta=\sum_{i=1}^kb_i\zeta_i\in {\cal X}(S)$.
Here as before, we assume that $a_m^j>0,b_i>0,\sum_i b_i=1=\sum_m a_m^ j$ 
for all $j$ and that furthermore, 
${\rm supp}(\xi_i)$, ${\rm supp}(\zeta)$ are 
disjoint unions of minimal components. The three steps used to construct
the topology are contained in three different subsections.

\subsection{Convergence to a minimal filling lamination}\label{filling}

Recall 
that the space of geodesic 
laminations on $S$ is compact with respect to the Hausdorff topology. We refer to  
Chapter 3 of \cite{CB88} for the case that the surface is compact. The case that $S$ has 
finite type but is non-compact follows in exactly the same way as in this case, any simple
geodesic is contained in a fixed compact subset of $S$, see e.g. Section 4.1 of \cite{CEG87}.  
Equivalently, replacing a puncture $p$ by a boundary component via removing the interior 
of an embedded disk in $S$ punctured at $p$ represents a geodesic lamination as a lamination
on a compact surface with boundary, to which \cite{CB88} can directly be applied. 

\noindent
{\bf Requirement 1:} {\sl Convergence in the coarse Hausdorff topology}\\
Let $\xi^{\ell_n}$ be any
subsequence of the sequence $\xi^j$ such that 
the geodesic laminations ${\rm supp}(\xi^{\ell_n})$ converge in the Hausdorff topology to 
a geodesic lamination $\beta$. Then $\beta$ contains ${\rm supp}(\zeta)$ as 
a sublamination.


\begin{example}\label{minimalfilling}
A geodesic lamination $c$ coarsely determines a point in ${\cal C\cal G}(S)\cup \partial {\cal C\cal G}(S)$.
Namely, if $c$ is minimal and filling, then $c\in \partial {\cal C\cal G}(S)$. Otherwise $c$ is disjoint from a simple
closed curve $c^\prime\in {\cal C\cal G}(S)$.

By a result of Klarreich \cite{K99} as reported in \cite{H06}, a sequence of non-filling 
geodesic laminations $c_i$
converges in the coarse Hausdorff topology to a minimal filling geodesic lamination 
$\eta$ if and only if the simple closed curves  $c_i^\prime\in {\cal C\cal G}(S)$ 
converge in ${\cal C\cal G}(S)\cup \partial {\cal C\cal G}(S)$ to 
$\eta\in \partial {\cal C\cal G}(S)$. \hfill $\blacksquare$
\end{example}

\begin{example}\label{join}
Let $S_1,\dots,S_k\subset S$ be disjoint subsurfaces. Example \ref{joincontained} shows that 
${\cal X}(S)$ contains the join ${\cal X}(S_1)*\cdots *{\cal X}(S_k)$ as a subset.
An element  
$\xi\in {\cal X}(S_1)*\cdots *{\cal X}(S_k)$ can be represented in the form
$\xi=\sum_i a_i\xi_i$
where $\xi_i\in {\cal X}(S_i)$, in particular, 
${\rm supp}(\xi_i)\subset S_i$, and $\sum_ia_i=1$. 
Since the subset of 
geodesic laminations on $S$ which are supported in $S_i$ is closed with respect to the 
Hausdorff topology, this implies that for any topology on ${\cal X}(S)$ which fulfills
the first requirement above, the subspace ${\cal X}(S_1)*\cdots *{\cal X}(S_k)$ of 
${\cal X}(S)$ is closed. \hfill $\blacksquare$
\end{example}

The examples show that the requirement (1) determines completely and geometrically
the convergence of a sequence $\xi_i\subset {\cal X}(S)$ to a point
$\xi\in \partial {\cal C\cal G}(S)\subset {\cal X}(S)$.

\begin{example}\label{farey}
In the case that $S$ is a once punctured torus or a four punctured sphere, then 
any non-trivial subsurface of $S$ different from a pair of pants is an annulus. In particular,
any geodesic lamination which is a disjoint union of minimal components is minimal
and either fills $S$ or is a simple closed curve. 
This easily implies that the topology of ${\cal X}(S)$ is determined by the requirement (1). 
Furthermore, it 
follows from Example \ref{oncepuncturedtorus} and the discussion in Example \ref{minimalfilling}
that the space ${\cal X}(S)$  is naturally homeomorphic to the Gromov boundary of 
the hyperbolic group ${\rm Mod}(S)$. \hfill $\blacksquare$
\end{example}

\subsection{Product spaces}\label{subsec:product}

In this subsection we consider a collection
$S_i$ $(1\leq 1\leq k)$ of pairwise disjoint proper subsurfaces of $S$.
This collection determines the subspace
\[{\cal X}(\cup_iS_i)={\cal X}(S_1)*\cdots *{\cal X}(S_k)\subset
{\cal X}(S).\]
Put 
\[{\cal C\cal G}(\cup_iS_i)={\cal C\cal G}(S_1)\times \cdots \times 
{\cal C\cal G}(S_k).\]
Our goal is to define a topology on the union 
\begin{equation}\label{up}
{\cal Y}(\cup_iS_i)={\cal C\cal G}(\cup_iS_i)\cup 
  \partial {\cal C\cal G}(S_1)*\cdots * \partial {\cal C\cal G}(S_k)=
{\cal C\cal G}(\cup_i S_i)\cup {\cal J}(\cup_i S_i)\end{equation}
which will be used in the construction of a topology on 
${\cal X}(S)$.

The main tool are \emph{complete markings} of (not necessarily proper) essential 
subsurfaces $S_0$ of the surface $S$. 
Such  a marking consists of a pants decomposition $P$ for $S_0$ together with a 
collection of \emph{spanning curves}. For every component $c$ of $P$, there exists 
such a spanning curve which intersects $c$ 
in the minimal number of points 
(one or two) and is disjoint from all other pants
curves. Two spanning curves may not be disjoint, but we require that the number of 
their intersection points is bounded from above by a universal constant.
Since there are only finitely many topological types of pants decompositions,
this can clearly be achieved. 
There is a natural way to equip the set of all markings on $S_0$ with the structure
of a locally finite connected graph on which the mapping class group 
${\rm Mod}(S_0)$ of $S_0$ acts properly and cocompactly. 
We refer to \cite{MM00} for more information on this construction.

Choose a marking $\mu$ on $S$ as a basepoint for the proper cocompact action of 
${\rm Mod}(S)$ on the marking graph. 
For every subsurface $S_0$ of $S$ which is distinct from a pair of pants or an annulus, this marking
can be used to construct a marking ${\rm pr}_{S_0}(\mu)$ of $S_0$ as follows.

There is a coarsely well defined \emph{subsurface projection} 
\[{\rm pr}_{S_0}:{\cal C\cal G}(S)\to {\cal C\cal G}(S_0)\]
which associates to a simple closed curve $c$ 
its intersection ${\rm pr}_{S_0}(c)=c\cap S_0$ with $S_0$ in the following sense. 
If $c\subset S_0$ then put ${\rm pr}_{S_0}(c)=c$, and if 
$c$ is disjoint from $S_0$ then put ${\rm pr}_{S_0}(c)=\emptyset$. 
In all other cases, $c\cap S_0$ consists 
of a collection of pairwise disjoint arcs with endpoints on the
boundary of $S_0$. We then put ${\rm pr}_{S_0}(c)=u$ for 
a simple closed curve $u$ in $S_0$ which is obtained from one of these intersection arcs 
by choosing a component of the boundary of a tubular neighborhood of the 
union of the arc with the boundary components of $S_0$ containing its endpoints.
Informally, we say that the simple closed curve is obtained by surgery on the arc.

Given a marking $\mu$ for $S$, 
the union of the intersections of the marking curves with $S_0$ 
consists of a 
union of 
arcs and simple closed curves on $S_0$ with pairwise uniformly bounded intersection
numbers which decompose
$S_0$ into simply connected regions. Hence via deleting some of these arcs and 
modifying some arcs with a surgery to simple closed curves as described in the previous paragraph, 
the projection of $\mu$ into $S_0$ coarsely defines a 
marking ${\rm pr}_{S_0}(\mu)$ of $S_0$ (there is a small abuse of notation here),
called the \emph{subsurface projection} of
$\mu$ \cite{MM00}. Here a coarse definition means that the construction depends on 
choices, but any two choices give rise to markings which are uniformly close in the 
marking graph of $S_0$, independent of the subsurface $S_0$.

If $S_0$ is an annulus, then a similar construction applies. In this case a marking 
consists of the choice of a marked point on each boundary component of $S_0$ and
an embedded arc in $S_0$ connecting the two distinct boundary components which is 
disjoint from the marked points. With a bit of care, a subsurface projection is 
defined for annuli as well. We refer to \cite{MM00} for more information.

By the above discussion, for every subsurface $S_0$ of $S$ the marking 
$\mu$ coarsely determines a basepoint $x_0\in {\cal C\cal G}(S_0)$ by choosing 
one of the marking curves (or arcs if $S_0$ is an annulus) 
of ${\rm pr}_{S_0}(\mu)$. 
As the intersection number between any two curves (or arcs) 
of ${\rm pr}_{S_0}(\mu)$ is uniformly bounded, the distance in the curve graph of $S_0$ 
between $x_0$ and any other curve from ${\rm pr}_{S_0}(\mu)$ or from any other marking of $S_0$ 
constructed in the same fashion from $\mu$ 
is uniformly bounded.

Let ${\rm Min}_\cup(S)$ be the space of geodesic laminations on $S$
which are disjoint unions of minimal components. 
Using the basepoint $x_0$ for ${\cal C\cal G}(S_0)$, 
we can extend the subsurface projection
${\rm pr}_{S_0}$ to all of ${\rm Min}_\cup(S)$ as follows. 
Let $\nu=\cup_i\nu_i\in {\rm Min}_\cup(S)$. Then there are three possibilities.
\begin{itemize}
 \item If the lamination $\nu$ is disjoint from $S_0$ up to homotopy, 
define ${\rm pr}_{S_0}(\nu)=x_0$. 
\item If there exist components $\nu_1,\dots,\nu_\ell$ of $\nu$ which are
contained in $S_0$ then 
define ${\rm pr}_{S_0}(\nu)=\cup_{i=1}^\ell\nu_i$.
\item If $\nu\cap S_0$ consists of a collection
of disjoint simple arcs with endpoints on the boundary of $S_0$ 
which coarsely define a point in 
${\cal C\cal G}(S_0)$ then define ${\rm pr}_{S_0}(\nu)$ to be any one of these points.
\end{itemize}
Note that by the definition, ${\rm pr}_{S_0}({\rm Min}_\cup(S))\subset {\rm Min}_\cup(S_0)$, and 
if $\nu$ is a disjoint union of simple closed curves, then the same holds true for
${\rm pr}_{S_0}(\nu)$.

Let again $S=\cup_{i=1}^kS_i$ be a collection of pairwise disjoint 
subsurfaces of $S$. It then follows from the above discussion that 
a choice $\mu$ of a marking of $S$ coarsely determines 
a basepoint $x=(x_1,\dots,x_k)$ for the 
product space 
${\cal C\cal G}(\cup_iS_i)$
consisting of the product of the coarsely well defined 
basepoints $x_i\in {\cal C\cal G}(S_i)$. 


Recall from (\ref{joinformula}) the definition of the sets ${\cal J}(\cup_iS_i)$.
Since the curve graph ${\cal C\cal G}(S_i)$ is a hyperbolic geodesic metric space, 
for every $p>1$ and every $p$-quasi-geodesic ray $\gamma:[0,\infty)\to {\cal C\cal G}(S_i)$,
there exists a coarsely well defined shortest distance projection 
$\Pi_\gamma:{\cal C\cal G}(S_i)\to \gamma$ which extends to the complement of 
the endpoint $\gamma(\infty)\in \partial {\cal C\cal G}(S_i)$ in $\partial {\cal C\cal G}(S_i)$.
The following definition should be thought of viewing ${\cal Y}(\cup_iS_i)$ as the union of
a product of hyperbolic spaces with its visual boundary. As curve complexes are non-proper
hyperbolic spaces and we are ultimately interested in the mapping class group, it is 
necessary to simultaneously work on all subsurfaces, that is, capture what happens 
with subsurface projections on nested
subsurfaces. This is implemented in condition (3) below. In the statement,  $p>1$ is 
a fixed number which fulfills some constraint that will be determined later.

\begin{definition}\label{firststep}
Define a topology on
${\cal Y}(\cup_iS_i)$
by the following requirements.
\begin{itemize}
\item 
The product space ${\cal C\cal G}(\cup_iS_i)$ is equipped with 
the product topology and is an open subset of ${\cal Y}(\cup_iS_i)$. 
\item 
The subspace ${\cal J}(\cup_i S_i)$ is equipped with 
the topology as a join of 
the Gromov boundaries of the curve graphs of $S_i$. 
\item 
Let $\xi=\sum_i a_i \xi_i\in 
\partial {\cal C\cal G}(S_1)*\cdots * \partial {\cal C\cal G}(S_k)$
and after reordering, assume that $a_i>0$ for all $i\leq \ell$ 
and $a_i=0$ for $i>\ell$.
A sequence of points $(y_1^j,\dots,y_k^j)_j\subset {\cal C\cal G}(\cup_iS_i)$  
converges to $\xi$ if the following three conditions are fulfilled. 
\begin{enumerate}
\item For each $i\leq \ell$ the components $y_i^j\in {\cal C\cal G}(S_i)$ 
converge as $j\to \infty$ to 
$\xi_i$ in the coarse Hausdorff topology (and hence they converge in 
${\cal C\cal G}(S_i)\cup \partial {\cal C\cal G}(S_i)$ to $\xi_i$, see \cite{H06}). In particular,
we have $d_{{\cal C\cal G}(S_i)}(y_i^j,x_i)\to \infty$ $(j\to \infty)$.  
\item 
For all $i\leq \ell$ denote by $\Pi_i$ the shortest distance
projection of ${\cal C\cal G}(S_i)$ onto a $p$-quasi-geodesic connecting 
the basepoint $x_i$ to $\xi_i$;  
then 
\[\frac{d_{{\cal C\cal G}(S_i)}(\Pi_i(y_i^j),x_i)}
  {d_{{\cal C\cal G}(S_1)}(\Pi_1(y_1^j),x_1)}
  \to \frac{a_i}{a_1}\quad (j\to \infty).\]
\item Let $i>\ell$ and let $V\subset S_i$ be any subsurface; then 
\[\frac{d_{{\cal C\cal G}(V)}({\rm pr}_V(y_i^j),{\rm pr}_V(\mu))}
  {d_{{\cal C\cal G}(S_1)}(\Pi_1(y_1^j),x_1)}
  \to 0\quad (j\to \infty).\]
\end{enumerate}
\end{itemize} 
\end{definition}

\begin{lemma}\label{topology}
The notion of convergence in Definition \ref{firststep} defines a separable topology on 
${\cal Y}(\cup_iS_i)$ which restricts to the given topology on 
$\partial {\cal C\cal G}(S_1)*\cdots * \partial {\cal C\cal G}(S_k)$
and on ${\cal C\cal G}(\cup S_i)$. The subspace 
$\partial {\cal C\cal G}(S_1)*\cdots * \partial {\cal C\cal G}(S_k)$
is closed in ${\cal Y}(\cup_iS_i)$. 
\end{lemma}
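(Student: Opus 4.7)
The plan is to define the topology on ${\cal Y}(\cup_iS_i)$ by specifying an explicit neighborhood basis at each point, verify the neighborhood-basis axioms, and then check that the sequential convergence it induces matches Definition \ref{firststep}. At a product point $(y_1,\dots,y_k)$ I would take the standard basis of the product topology on $\prod_i{\cal C\cal G}(S_i)$, making these neighborhoods disjoint from the join. At a join point $\xi=\sum_{i\leq s}a_i\xi_i$ with $a_i>0$ exactly for $i\leq s$, I would take basic neighborhoods $U(\xi;\epsilon,N,V_1,\dots,V_s)$ — parametrized by $\epsilon,N>0$ and coarse Hausdorff open neighborhoods $V_i\subset \overline{\cal C\cal G}(S_i)$ of $\xi_i$ — consisting of all join points $\sum_j b_j\zeta_j$ with $|b_j-a_j|<\epsilon$ for all $j$ and $\zeta_i\in V_i$ whenever $i\leq s$ and $b_i>0$, together with all product points $(y_1,\dots,y_k)$ such that $y_i\in V_i$ and $d_{{\cal C\cal G}(S_i)}(y_i,x_i)>N$ for every $i\leq s$, and for which the normalized ratios $d_{{\cal C\cal G}(S_i)}(y_i,x_i)/\sum_j d_{{\cal C\cal G}(S_j)}(y_j,x_j)$ lie within $\epsilon$ of $a_i$ for every $i$.

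The nontrivial neighborhood axiom is the shrinking axiom: every point in a basic neighborhood $U(\xi;\dots)$ must admit a smaller basic neighborhood contained in it. I would verify this by case analysis — for an interior join point, one tightens the weight tolerance and the coarse Hausdorff neighborhoods; for an interior product point, one uses that the ratio and coarse Hausdorff conditions are all open in the product topology, so a small product-open neighborhood remains in $U(\xi;\dots)$. Once the basis is validated, the equivalence between convergence in this topology and Definition \ref{firststep} is tautological, because the two clauses of that definition spell out exactly what it takes for a product sequence to enter every such $U(\xi;\dots)$ from some point on.

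The restriction statements and the closedness of the join then drop out of the construction. On ${\cal J}(\cup_iS_i)$, $U(\xi;\dots)$ restricts to the standard join-topology neighborhood of $\xi$ (a product of weight- and coarse Hausdorff-neighborhoods), matching the given join topology. On ${\cal C\cal G}(\cup_iS_i)$ every basic neighborhood of a product point is product-open by construction, and the trace of any $U(\xi;\dots)$ is likewise product-open because all of its defining conditions are. Closedness of the join follows because every product point $y$ has a basic neighborhood entirely inside the product factor, so no sequence of join points can converge to $y$. The main obstacle I anticipate is verifying openness of the normalized-ratio condition — which couples all $k$ factors — in the shrinking axiom at the product/join interface, but this reduces to elementary continuity of ratios of positive quantities together with the fact that $d_{{\cal C\cal G}(S_i)}(\cdot,x_i)$ is continuous on the curve graph.
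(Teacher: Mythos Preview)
Your approach is correct in outline but takes a genuinely different route from the paper. The paper does not construct neighborhoods at all: it declares a subset $A\subset{\cal Y}(\cup_iS_i)$ to be \emph{closed} exactly when $A\cap{\cal C\cal G}(\cup_iS_i)$ is closed in the product, $A\cap{\cal J}(\cup_iS_i)$ is closed in the join, and every product sequence in $A$ that converges in the sense of Definition~\ref{firststep} has its limit in $A$. It then verifies the closed-set axioms directly --- arbitrary intersections are trivial, and finite unions follow by the pigeonhole trick of passing to a subsequence lying in a single $B_m$. This is considerably shorter than what you propose, because it sidesteps the neighborhood-basis verification entirely; in particular it never has to confront the shrinking axiom at the product/join interface that you correctly flag as the delicate step. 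Your explicit-basis approach has the compensating virtue of producing concrete neighborhoods of boundary points, which is exactly what the paper needs later (Section~\ref{Sec:metrizability}) and has to build from scratch there. One small technical caveat in your construction: for the shrinking axiom to go through at a join point $\eta\in U(\xi;\epsilon,\dots)$, you will want to restrict your basis to parameters with $\epsilon<\min_{i\leq s}a_i$, so that every nearby join point $\eta$ is forced to have $b_i>0$ for all $i\leq s$ and hence carries a direction constraint in each $V_i$; otherwise a neighborhood of an $\eta$ with $b_{i_0}=0$ imposes no control on the $i_0$-th direction and can leak out of $U(\xi;\dots)$.
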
  
\begin{proof}
Define a subset $A$ of ${\cal Y}(\cup_i S_i)$ 
to be \emph{closed} if $A_1=A\cap 
{\cal C\cal G}(\cup_iS_i)$ is closed, $A_2=A\cap  
\partial {\cal C\cal G}(S_1)*\cdots * \partial {\cal C\cal G}(S_k)$
is closed and if
furthermore the following holds true. If 
$y_i\subset A_1$ is a sequence which
converges in the sense of Definition \ref{firststep} to 
a point $y\in \partial {\cal C\cal G}(S_1)*\cdots * \partial {\cal C\cal G}(S_k)$, then $y\in A_2$. 
Note that by definition, the empty set is closed, and the same holds true for the 
total space.

We have to show that complements of closed sets
defined in this way fulfill
the axioms of a topology, that is,
they are stable under arbitrary unions and finite
intersections. Equivalently, the family of closed
sets is stable under arbitrary intersections
and finite unions. As this holds true for the closed subsets of 
${\cal C\cal G}(\cup_iS_i)$ and for the closed subsets of 
${\cal J}(\cup_iS_i)=\partial {\cal C\cal G}(S_1)*\cdots * \partial {\cal C\cal G}(S_k)$, all we need to observe
is that taking arbitrary intersections and finite unions is 
consistent with the notion of convergence of points in
${\cal C\cal G}(\cup_iS_i)$ to points in the
join
$\partial {\cal C\cal G}(S_1)*\cdots * \partial {\cal C\cal G}(S_k)$ 
in the sense of Definition \ref{firststep}.

Consistency with arbitrary intersections is straightforward. To show consistency 
with taking finite unions let 
$B_1,\dots, B_\ell\subset {\cal Y}(\cup_iS_i)$ be closed in the above sense. Let 
$y_j\subset \cup_k(B_k\cap {\cal C\cal G}(\cup_iS_i))$ be any sequence which 
converges to a point in ${\cal J}(\cup_iS_i)$ according to the definition of 
convergence.  By passing to a
subsequence, we may assume that $y_j\in B_m$ for a fixed $m\leq \ell$ and
all $j$. As $B_m$ is closed and the subsequence also 
fulfills the requirements for convergence, its limit  
is contained in $B_m\subset \cup_kB_k$. Hence indeed, the notion of 
a closed set is consistent with taking finite unions.
%
\end{proof}

\subsection{Projections and a topology on ${\cal X}(S)$}

As mentioned in the previous subsection, to capture the geometry of the mapping class group using curve
graphs we have to work simultaneously with all subsurfaces, which includes the necessity
to study degenerating sequences of mapping classes whose restrictions to a nontrivial
subsurface are all homotopic to the identity. 

To this end define
for a disjoint union $\cup_{i=1}^kS_i$ of subsurfaces of $S$ the \emph{set}
\begin{equation}
{\cal Z}(\cup_iS_i)=\cup_{I}{\cal Y}(\cup_{i\in I}S_{i})*{\cal J}(\cup_{j\in \{1,\dots,k\}
    \setminus I}S_j)\end{equation}
where $I$ runs through all (possibly empty) subsets of the index set $\{1,\dots,k\}$
and we identify the points which appear in several ways in
this union. Namely, 
for every $I$, the space
${\cal Y}(\cup_{i\in I}S_{i})*{\cal J}(\cup_{j\in \{1,\dots,k\}
  \setminus I} S_j)$ contains ${\cal J}(\cup_{i=1}^kS_i)$, the join of the
boundaries of the curve graphs of the surfaces $S_i$, and we perform the obvious
identification of the points which coincide as formal sums $\sum_ia_i \xi_i$ where 
$\sum_ia_i=1$ and $\xi_i\in \partial {\cal C\cal G}(S_i)$.  No other identifications are made.
Note that
${\cal Z}(\cup_iS_i)$ contains ${\cal Y}(\cup_{i\in I}S_i)$ for all $I\subset \{1,\dots,k\}$.

\begin{lemma}\label{topofz}
There exists a unique 
topology on ${\cal Z}(\cup_i S_i)$ with the property that 
a set $U\subset {\cal Z}(\cup_iS_i)$ is open if and only if
its intersection with each of the subspaces
${\cal Y}(\cup_{i\in I}S_i)*{\cal J}(\cup_{j\in \{1,\dots,k\}\setminus I}S_j)$ is open.
\end{lemma}
\begin{proof} For every $I\subset \{1,\dots,k\}$, the set
${\cal J}(\cup_iS_i)$ is a closed subspace of 
\[{\cal Y}(\cup_{i\in I}S_i)*{\cal J}(\cup_{j\in \{1,\dots,k\}\setminus I}S_j),\]
equipped with the topology of a join. Thus the topology described in the lemma is just the quotient topology on
the quotient of the disjoint union of the spaces 
${\cal Y}(\cup_{i\in I}S_i)*{\cal J}(\cup_{j\in \{1,\dots,k\}\setminus I}S_j)$ by the
closed equivalence
relation which identifies the points in the subspaces ${\cal J}(\cup_iS_i)$. 
\end{proof}

We next define a 
projection
\[{\rm pr}_{{\cal Z}(\cup S_i)}:{\cal X}(S)\to {\cal Z}(\cup_iS_i)\]
as follows. Let $\xi=\sum_{j=1}^m a_j\xi_j\in {\cal X}(S)$ 
with $a_j>0$ and 
$\sum_ja_j=1$ and write as before ${\rm supp}(\xi)=\cup_j\xi_j$.
After perhaps a reordering of the components $\xi_j$ and a reordering of the surfaces $S_i$, assume that for some
$u\leq \min\{k,m\}$ the components
$\xi_1,\dots,\xi_u$ fill the 
subsurfaces $S_1,\dots,S_u$, 
that is, they define points in
$\partial {\cal C\cal G}(S_i)$, with the convention of remembering labels of simple closed curves, 
and that for no $j>u$, the component 
$\xi_j$ fills any of the surfaces $S_i$. 
As the components of ${\rm supp}(\xi)$ are disjoint, 
this implies that if
$s,t>u$, if $i\in \{u+1,\dots,k\}$ and if the subsurface projections
${\rm pr}_{S_i}(\xi_s),{\rm pr}_{S_i}(\xi_t)$ 
of $\xi_s,\xi_t$ into $S_i$ are not empty, then they
are of uniformly bounded distance in ${\cal C\cal G}(S_i)$. Recall that this makes
sense even if $\xi_s,\xi_t$ are different from simple closed curves. 
If the lamination ${\rm supp}(\xi)=\cup_i\xi_i$ is disjoint from 
the subsurface $S_\ell$, then the 
projection component is defined to be the basepoint of 
${\cal C\cal G}(S_\ell)$ constructed from the base marking.

Define
\begin{align*}
{\rm pr}_{{\cal Z}(\cup S_i)}(\sum_{j=1}^m a_j\xi_j) &=\sum_{j=1}^ua_j\xi_j+
(1-\sum_{j=1}^ua_j)
({\rm pr}_{{\cal C\cal G}(\cup_{i\geq u+1}S_i)}(\cup_{j\geq u+1}\xi_j))\\
&\in {\cal C\cal G}(\cup_{i=u+1}^kS_i)* {\cal J}(\cup_{i=1}^u S_i).\end{align*}
Here the term on the right hand side is understood in the following sense. 
First, if one of the surfaces $S_i$ $(i\leq u)$ is an annulus then the label of $\xi_i$
is remembered in ${\rm pr}_{{\cal Z}(\cup_iS_i)}(\sum_j a_j \xi_j)$. Second,  
for some $\ell\in \{u+1,\dots,k\}$ let us consider the subsurface $S_\ell$. 
If there exists some $s>u$ such that $\xi_s$ intersects
$S_\ell$, then the component in $S_\ell$ of the projection 
${\rm pr}_{{\cal C\cal G}(\cup_{i\geq u+1}S_i)}(\cup_{j\geq u+1}\xi_j)$ is a point in 
${\cal C\cal G}(S_\ell )$. Although this projection depends on choices, it is 
coarsely well defined, that is, well defined up to a uniformly bounded error.
Finally, recall that we take the join
of the product of the curve graphs of the subsurfaces $S_i$ for $i\leq u$ and 
the join of the Gromov boundaries $\partial {\cal C\cal G}(S_i)$ for $i\geq u+1$, so the sum
$1- \sum_{j=1}^ua_j$ is treated as a single coefficient.

\noindent
{\bf Requirement 2:} A sequence 
$\xi^j=\sum_m a_m^j \xi_m^j\subset {\cal X}(S)$ converges to 
$\zeta=\sum_{i=1}^kb_i\zeta_i\in {\cal X}(S)$
if the following holds true. For each $i\leq k$ let $S_i$ be the subsurface of $S$ filled 
by $\zeta_i$ and 
put $S_{k+1}=S\setminus \cup_{i=1}^kS_i$; then
\[{\rm pr}_{{\cal Z}(\cup_{i=1}^{k+1}S_i)}(\xi^{j})
\to \zeta \text{ in }
{\cal Z}(\cup_{i=1}^{k+1}S_i)\supset {\cal J}(\cup_{i=1}^{k+1}S_i)\supset
{\cal J}(\cup_{i\leq k} S_i).\] 

In other words, for all $i,j$ the subsurface projection of $\xi^j$ to $S_i$ is either
a filling lamination or empty or coarsely determines a  point in 
${\cal C\cal G}(S_i)$, and these projections, equipped with the weight inherited 
from the weights of the components of $\xi^j$, converge in ${\cal Z}(\cup_{i=1}^{k+1}S_i)$ to $\zeta$. 

\begin{remark}\label{comment}
It follows from the above description that for this notion of convergence,
the following holds true. Let $\xi^j$ be a sequence in ${\cal X}(S)$
consisting of minimal geodesic laminations which converges to a point 
$\zeta=\sum_u b_u\zeta_u$.

\begin{enumerate}
\item[(a)]
The lamination 
${\rm supp}(\zeta)$ is a sublamination of
the limit in the Hausdorff topology of any
convergent subsequence of the sequence ${\rm supp}(\xi^j)$.
\item[(b)]  For each $j$ let 
$\eta^j$ be a minimal geodesic lamination disjoint from $\xi^j$ 
(we allow $\eta^j=\xi^j$) and let $s_i\in [0,1]$. Then any limit of a convergent subsequence
of the sequence $\nu^j=s_i\xi^j+(1-s_i)\eta^j$ is of the form 
$s\zeta +(1-s)\eta$ where $\eta$ is a limit of a subsequence of 
the sequence $\eta^j$ and where $s\in [0,1]$.
\item[(c)] Requirement (1) in the definition of the topology on 
${\cal X}(S)$ is a consequence of requirement (2) and was included for added clarity. 
\item[(d)] If $\xi^j=\sum_{i=1}^{m^j} a_i^j\xi_i^j\subset {\cal X}(S)$ is any sequence so that for some
fixed $1\leq m\leq m^j$ it holds $\sum_{i=1}^ma_i^j\to 1$ and if the sequence 
$\frac{1}{\sum_{i=1}^ma_i^j}\sum_{i=1}^m a_i^j\xi_i^j$ converges to $\zeta\in {\cal X}(S)$, then 
the same holds true for the sequence $\xi^j$. This makes the topology weak
enough for our goal and is motivated by the weak$^*$-topology on the 
space of measured geodesic laminations. 
\hfill $\blacksquare$
\end{enumerate}
\end{remark}

\begin{definition}\label{geometrictop}
A subset $A\subset {\cal X}(S)$ is called \emph{closed for the geometric
topology of ${\cal X}(S)$} if the following holds true. Let $\xi_i\subset A$ be
any sequence which converges to a point $\xi\in {\cal X}(S)$ in the sense
described by the requirements (2); then $\xi\in A$.
\end{definition}

Recall that for any collection $S_1,\dots,S_k$ of pairwise disjoint subsurfaces of $S$,
the space ${\cal J}(\cup_{i=1}^kS_i)$ is equipped with a natural topology 
as a join of the Gromov boundaries of the curve graphs of the subsurfaces $S_i$.
The following statement is the first main step towards the proof of 
Theorem \ref{main}.

\begin{proposition}\label{top2}
\begin{enumerate}
\item 
Closed subsets of ${\cal X}(S)$ in the sense of Definition \ref{geometrictop}
define a separable Hausdorff topology ${\cal O}$ on ${\cal X}(S)$. 
\item For any collection $S_1,\dots,S_k$ of pairwise disjoint subsurfaces,
the natural inclusion 
${\cal J}(\cup_{i=1}^kS_i)\to ({\cal X}(S), {\cal O})$ is an embedding.
\end{enumerate}
\end{proposition}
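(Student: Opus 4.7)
The three parts are treated in order, with most of the effort concentrated in part~(2).

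For part~(1), we verify that arbitrary intersections and finite unions of closed sets are closed. Intersections are immediate: if $\xi^j \subset \bigcap_\alpha A_\alpha$ converges to $\xi$, then $\xi^j \subset A_\alpha$ for each $\alpha$, so $\xi \in A_\alpha$ by closedness, hence $\xi \in \bigcap_\alpha A_\alpha$. For a finite union $A_1 \cup \cdots \cup A_\ell$, pigeonhole extracts from any convergent sequence in the union a subsequence lying entirely in one $A_i$, and we need this subsequence to converge to the same limit in the sense of Requirements (1)--(3). The needed subsequence consistency is immediate, since each of the three requirements is phrased in terms of \emph{arbitrary} Hausdorff-convergent subsequences of ${\rm supp}(\xi^j)$, and every Hausdorff-convergent subsequence of the extracted subsequence is one of the original.

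For part~(2), injectivity is immediate from the identification of points of ${\cal X}(S)$ with weighted labeled minimal laminations. For the topological agreement, consider $\xi^j = \sum_i a_i^j \xi_i^j \in {\cal J}(\cup_i S_i)$ with each $\xi_i^j$ filling $S_i$. If $\xi^j \to \xi = \sum_i b_i \zeta_i$ in the join topology, then any Hausdorff limit of ${\rm supp}(\xi^j)$ contains ${\rm supp}(\xi)$ as a sublamination, giving Requirement~(1). The crucial observation for Requirement~(3) is that every component of ${\rm supp}(\xi^j)$ already fills one of the prescribed subsurfaces $S_i$, which remain among the surfaces $T_i$ filled by the minimal components of any subsequential Hausdorff limit; consequently ${\rm pr}_{{\cal Z}(\cup_i T_i)}(\xi^j) = \xi^j$, and the required convergence in ${\cal Z}(\cup_i T_i)$ reduces to the join-topology convergence $a_i^j \to b_i$, $\xi_i^j \to \zeta_i$. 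Conversely, if $\xi^j \to \xi$ in ${\cal O}$ and both lie in ${\cal J}(\cup_i S_i)$, the same projection identity combined with Requirement~(3) forces join convergence of the weights and of the boundary components. The delicate bookkeeping concerns indices $i$ with $b_i = 0$ whose $\xi_i^j$ accumulate nontrivially in Hausdorff topology, possibly producing extra minimal components $\beta_{k+1}, \dots, \beta_n$ of the Hausdorff limit and hence extra surfaces $T_{k+1}, \dots, T_n$; but because each $\xi_i^j$ lies in some $S_i$, these extra components contribute trivially to the projection and the argument closes.

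For part~(3), each $\phi \in {\rm Mod}(S)$ acts bijectively on ${\cal X}(S)$ via its action on weighted labeled minimal laminations. To show $\phi$ is a homeomorphism it suffices to verify that $\xi \mapsto \phi\xi$ preserves Requirements (1)--(3). Requirement~(1) is immediate from $\phi$-equivariance of the Hausdorff topology on geodesic laminations. For Requirements (2) and (3), the subsurface projection is equivariant up to uniformly bounded error, so $\phi$ carries the combinatorial data underlying these requirements to the analogous data for the permuted subsurfaces; the basepoint $x_{\phi S_i}$ obtained by projecting the fixed marking $\mu$ to $\phi S_i$ differs from $\phi(x_i)$ by a finite, not a priori uniformly bounded, amount in ${\cal C\cal G}(\phi S_i)$. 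The decisive point is that Requirements (2) and (3) only involve ratios of curve-graph distances whose denominators tend to infinity, so an additive shift of the basepoint by any finite quantity is absorbed in the limit. Hence the requirements are $\phi$-equivariant and $\phi$ acts by homeomorphisms; the main obstacle across the proof is precisely the careful bookkeeping in part~(2) described above.
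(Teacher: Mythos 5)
Your overall strategy mirrors the paper's: part (1) via arbitrary intersections and a pigeonhole argument for finite unions, part (2) by comparing join convergence with Requirements (1)--(3) through the projections ${\rm pr}_{{\cal Z}(\cup_\ell T_\ell)}$, and part (3) by equivariance of subsurface projections combined with the observation that the requirements involve only ratios of distances that diverge, so the additive basepoint shift is absorbed. Parts (1) and (3) are correct and essentially identical to the paper's argument.

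There is, however, a concrete error in part (2). Your ``crucial observation'' asserts that every component $\xi_i^j$ of ${\rm supp}(\xi^j)$ fills one of the prescribed $S_i$, that these $S_i$ remain among the surfaces $T_\ell$ filled by the minimal components of a subsequential Hausdorff limit $\beta$, and hence that ${\rm pr}_{{\cal Z}(\cup_\ell T_\ell)}(\xi^j)=\xi^j$. This fails whenever some weight $b_i$ of the join limit $\zeta=\sum_ib_i\zeta_i$ vanishes. For such an $i$, the component $\xi_i^j$ still fills $S_i$, but the Hausdorff limit of the sequence $\xi_i^j$ need not fill $S_i$; its minimal components generically fill proper subsurfaces of $S_i$, so $S_i$ does not occur among the $T_\ell$. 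Consequently ${\rm pr}_{{\cal Z}(\cup_\ell T_\ell)}$ does not fix $\xi^j$ but instead replaces each such $\xi_i^j$ by its subsurface projections into the $T_\ell$. Your closing sentence flags the appearance of extra surfaces $T_{k+1},\dots,T_n$, but the justification ``because each $\xi_i^j$ lies in some $S_i$, these extra components contribute trivially'' is not a reason. The reason, which is what the paper records explicitly, is that join convergence in ${\cal J}(\cup_iS_i)$ forces $a_i^j\to 0$ precisely for those $i$ with $b_i=0$ and $a_i^j\to b_i$ otherwise; hence the non-${\cal J}$ summand in ${\rm pr}_{{\cal Z}(\cup_\ell T_\ell)}(\xi^j)$ carries weight $1-\sum_{b_i>0}a_i^j\to 0$, while the ${\cal J}$-part converges to $\zeta$. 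Spelling this out is precisely the ``delicate bookkeeping'' you defer, and the argument does not close without it.
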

\begin{proof} Let ${\cal O}\subset {\cal X}(S)$ be the family of all subsets of 
${\cal X}(S)$ whose complement is closed in the above sense. 
Sets in ${\cal O}$ are called \emph{open}. We have to show 
that ${\cal O}$ defines a topology on 
${\cal X}(S)$. 

As the empty
set and the entire space ${\cal X}(S)$ are open, to show that  
${\cal O}$ is indeed a topology on ${\cal X}(S)$ it suffices to show
that arbitrary unions of open sets are open, and that finite intersections
of open sets are open as well. Or, equivalently, arbitrary intersections of 
closed sets are closed, and finite unions of closed sets are closed.
This can be established using exactly the same reasoning 
as in the proof of Lemma \ref{topology}.

Namely, that the collection of closed sets is stable under arbitrary intersections
is immediate from the definition. So let $B_1,\dots, B_k$ be closed sets and let
$B=\cup_iB_i$. Choose a sequence $\xi_i\subset B$ which converges in the sense of 
requirement (2) to some point $\zeta$. By passing to a subsequence, we may assume that
$\xi_i\in B_\ell$ for some $\ell\leq k$ and all $i$. But then 
$\zeta\in B_\ell\subset B$ as $B_\ell$ is closed which completes the proof that
${\cal O}$ is indeed a topology on ${\cal O}$.

We show next the second property claimed in the proposition.
Thus let $S_1,\dots,S_k$ be a 
collection  of pairwise disjoint subsurfaces
of $S$. Our goal is to show that  
the inclusion ${\cal J}(\cup_{i=1}^kS_i)\to ({\cal X}(S),{\cal O})$ 
is an embedding. Since the inclusion is injective, and closed 
sets in both spaces are defined via convergence of sequences,
this is equivalent to stating that 
a sequence $\xi^j=\sum_ia_i^j\xi_i^j\subset {\cal J}(\cup_{i=1}^kS_i)$
converges in $({\cal X}(S),{\cal O})$ to a point 
$\zeta\in {\cal J}(\cup_{i=1}^kS_i)$ if and only if 
$\xi^j$ converges in ${\cal J}(\cup_{i=1}^kS_i)$ to $\zeta$.
However, putting $S_{k+1}=S\setminus \cup_iS_i$, 
this is immediate from the definition of the 
topology on ${\cal Z}(\cup_{i=1}^{k+1}S_i)$ and 
requirement (2) in the definition of convergence in 
${\cal X}(S)$ and shows
the second part of the proposition.

Since each of the spaces ${\cal J}(\cup_{i=1}^kS_i)$ is
a finite join of separable metrizable spaces (namely, the Gromov
boundary of a curve graph of a subsurface of $S$) and hence
separable metrizable, 
the second part of the proposition implies 
that $({\cal X}(S),{\cal O})$ is
a countable union of (in general not disjoint) 
separable metrizable spaces and hence is 
separable. 

To show that the topology is Hausdorff let 
$\xi=\sum_ia_i\xi_i\not=\zeta=\sum_jb_j\zeta_j\in {\cal X}(S)$. 
We have to show that $\xi,\zeta$ have disjoint neighborhoods. 

If this is not the case, then any neighborhoods $U_\xi$ of $\xi$ 
and $U_\zeta$ of $\zeta$ intersect nontrivially. 
Since ${\cal X}(S)$ is separable, and since points in ${\cal X}(S)$ 
are closed by construction,
we conclude that there is a sequence 
$\xi^j\subset {\cal X}(S)$ 
which converges both to $\xi,\zeta$. 
But for the notion of convergence used to define the topology ${\cal O}$,
the limit of a converging sequence is unique. Thus ${\cal O}$ is indeed Hausdorff 
which completes the proof the first part of the proposition. 
\end{proof}

\begin{example}\label{limit}
i) Let $\phi\in {\rm Mod}(S)$ be a pseudo-Anosov element.
Then $\phi$ acts as a loxodromic isometry on the curve graph of $S$,
with attracting and repelling fixed points $\nu_+,\nu_-\in \partial {\cal C\cal G}(S)$.
Let $\mu\in {\cal X}(S)$ be any minimal geodesic lamination 
which is distinct from the repelling 
fixed point $\nu_-$ of $\phi$ and hence intersects $\nu_-$. 
Then  
$\phi^j\mu\to \nu_+$ $(j\to \infty)$ in the coarse Hausdorff topology 
and therefore $\phi^j\mu\to \nu_+$ in ${\cal X}(S)$. 

ii) Now let us assume that $S_0\subset S$ is a proper connected
subsurface different from an annulus and a pair of pants 
and that $\phi\in {\rm Mod}(S)$ 
restricts to a pseudo-Anosov mapping class on $S_0$ 
and to the trivial mapping class on $S-S_0$. Let 
$\nu_+,\nu_-\in \partial {\cal C\cal G}(S_0)$ 
be the attracting and repelling geodesic lamination for the action of $\phi$ on $S_0$,
respectively. 
Let furthermore $\mu\not=\nu_- \in {\cal X}(S)$ be any \emph{minimal} 
geodesic lamination on 
$S$. Then there are two possibilities. In the first case, 
$\mu$ is supported in 
$S-S_0$. Then we have $\phi^j(\mu)=\mu$ for all $j$.
However, if $\mu$ intersects $S_0$, then either $\mu=\nu_{-}$ or 
$\mu$ intersects $\nu_-$ and 
we have $\phi^j(\mu)\to \nu_+$ $(j\to \infty)$ in ${\cal X}(S)$.

Namely, if $\mu$ intersects $S_0$ then the subsurface projection of
$\mu$ into any subsurface disjoint from $S_0$ is a collection of
arcs intersecting $\partial S_0$. In particular, the subsurface projection
into any subsurface $V$ of $S-S_0$ is a point of ${\cal C\cal G}(V)$.
Since $\phi$ can be represented by a diffeomorphism which fixes
$S-S_0$ pointwise, it acts trivially on ${\cal C\cal G}(V)$ which 
yields the above statement. \hfill $\blacksquare$
\end{example}

\begin{corollary}\label{countable}
$({\cal X}(S),{\cal O})$ is a Lindel\"of space.
\end{corollary}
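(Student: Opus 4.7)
The plan is to deduce both properties entirely from two inputs already established in the excerpt: the tautological decomposition
\[{\cal X}(S)=\bigcup {\cal J}(\cup_{i=1}^kS_i)\]
from (\ref{exx}), ranging over all collections of pairwise disjoint essential subsurfaces of $S$, and the fact from Proposition \ref{top2}(2) that each inclusion ${\cal J}(\cup_{i=1}^kS_i)\hookrightarrow ({\cal X}(S),{\cal O})$ is a topological embedding. The first thing I would check is that this union is in fact \emph{countable}. There are only countably many isotopy classes of essential simple closed curves on $S$ (since ${\rm Mod}(S)$ is countable and acts with finitely many orbits on curves of each topological type), hence only countably many isotopy classes of multicurves; since each collection of disjoint essential subsurfaces is determined, up to isotopy, by its boundary multicurve together with a finite combinatorial choice of how to group complementary components and which peripheral annuli to include, there are only countably many such collections.

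Next I would record that each ${\cal J}(\cup_{i=1}^kS_i)$ is a finite join of Gromov boundaries $\partial {\cal C\cal G}(S_i)$ of curve graphs (with the two-point convention for annuli). Each $\partial {\cal C\cal G}(S_i)$ is separable metrizable (this is the topology introduced in Section \ref{titsbd}), and a finite join of separable metrizable spaces is separable metrizable, hence second countable, hence Lindel\"of.

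Finally I would assemble the two properties. For separability, pick a countable dense subset $D_\alpha$ of each of the countably many joins ${\cal J}(\cup_i S_i^\alpha)$; since the inclusion into ${\cal X}(S)$ is an embedding, $D_\alpha$ remains dense in its image, and $\bigcup_\alpha D_\alpha$ is a countable subset of ${\cal X}(S)$ whose closure contains every ${\cal J}(\cup_i S_i^\alpha)$ and therefore equals ${\cal X}(S)$. For the Lindel\"of property, given an open cover $\{U_\lambda\}$ of $({\cal X}(S),{\cal O})$, the restriction $\{U_\lambda\cap {\cal J}(\cup_i S_i^\alpha)\}_\lambda$ is an open cover of the Lindel\"of subspace ${\cal J}(\cup_i S_i^\alpha)$ (again using that the inclusion is an embedding); extract a countable subcover, and take the countable union over $\alpha$ of the corresponding indices $\lambda$ to obtain a countable subcover of ${\cal X}(S)$.

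I do not expect any genuine obstacle: Proposition \ref{top2} has already done the substantive work, so the corollary is essentially the trivial principle that a countable union of separable Lindel\"of subspaces is itself separable and Lindel\"of. The only point to be attentive to is the countability of the indexing set of subsurface collections, which is the mild input described above.
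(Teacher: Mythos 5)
Your proposal is correct and follows essentially the same route as the paper: both decompose ${\cal X}(S)$ as a countable union of the embedded, separable metrizable joins ${\cal J}(\cup_iS_i)$, and then invoke the standard facts that a countable union of separable (resp.\ Lindel\"of) subspaces is separable (resp.\ Lindel\"of). The only difference is that you explicitly justify the countability of the family of subsurface collections, which the paper takes as given.
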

\begin{proof} 
We have to show that any open cover of ${\cal X}(S)$ has a countable 
subcover. To this end let ${\cal U}$ be such an open cover.
List the countably many spaces ${\cal J}(\cup_i S_i)$ as 
${\cal J}_1,{\cal J}_2,\dots $. Since for each $i$, the space 
${\cal J}_i$ is separable and metrizable, the restriction of 
${\cal U}$ to ${\cal J}_i$, which is an open covering of
${\cal J}_i$, has a countable subcover, say by sets 
$U_i^1,U_i^2,\dots$.
The standard diagonal argument shows that
 the union ${\cal V}=\cup_{i,j}U_i^j$ consists of countably many sets, 
and for each $i$, the sets from ${\cal V}$ cover ${\cal J}_i$.
Since ${\cal X}(S)=\cup_i{\cal J}_i$ (as a set), this shows that ${\cal V}$
is a countable subcover of the cover ${\cal U}$.
In other words, ${\cal X}(S)$ is a Lindel\"of space as claimed.
\end{proof}

\begin{proposition}\label{compactbd}
$({\cal X}(S),{\cal O})$ is compact.
\end{proposition}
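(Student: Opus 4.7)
The strategy is two-fold: establish sequential compactness of $({\cal X}(S),{\cal O})$, combine it with Corollary~\ref{countable} to conclude compactness, and verify the Hausdorff axiom separately.

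\smallskip
\noindent\emph{Sequential compactness.} Given a sequence $\xi^j=\sum_{i=1}^{k_j}a_i^j\xi_i^j\in {\cal X}(S)$, the number of pairwise disjoint minimal components of a geodesic lamination on $S$ is bounded in terms of $\chi(S)$, so I first pass to a subsequence with $k_j=k$ constant and weights $a_i^j$ converging in $[0,1]$. By compactness of the space of geodesic laminations on $S$ in the Hausdorff topology, I extract a further subsequence so that ${\rm supp}(\xi^j)$ converges in Hausdorff to a lamination $\beta$ whose minimal components $\beta_1,\dots,\beta_n$ fill pairwise disjoint subsurfaces $T_1,\dots,T_n$. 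For each $\ell$ I then examine the subsurface projection ${\rm pr}_{T_\ell}(\xi^j)\in {\cal C\cal G}(T_\ell)$: after passing to subsequences, either this projection is unbounded, in which case by the Klarreich identification of $\partial {\cal C\cal G}(T_\ell)$ with the coarse Hausdorff topology it converges to $\beta_\ell$, or it stays bounded near the base point $x_\ell$. A final diagonal extraction over $\ell$ secures convergence of the ratios $d_{{\cal C\cal G}(T_\ell)}({\rm pr}_{T_\ell}(\xi^j),x_\ell)/d_{{\cal C\cal G}(T_1)}({\rm pr}_{T_1}(\xi^j),x_1)$, giving weights $b_\ell\geq 0$ with $\sum_\ell b_\ell=1$, positive exactly in the unbounded case. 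The point $\zeta=\sum_{b_\ell>0}b_\ell\beta_\ell\in {\cal X}(S)$ is then the desired limit by the three requirements defining the topology ${\cal O}$.

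\smallskip
\noindent\emph{From sequential to full compactness.} Sequential compactness implies countable compactness in any topological space, and Corollary~\ref{countable} supplies the Lindel\"of property, so every open cover has a countable and then a finite subcover, yielding compactness.

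\smallskip
\noindent\emph{Hausdorffness.} For distinct $\zeta,\zeta'\in {\cal X}(S)$, I construct disjoint open neighborhoods by cases. If the underlying unlabeled laminations ${\rm supp}(\zeta)\neq {\rm supp}(\zeta')$ differ, I choose disjoint Hausdorff-open neighborhoods of the two supports in the compact space of geodesic laminations on $S$ and convert them to disjoint open sets in $({\cal X}(S),{\cal O})$ using Requirement~(1). If the underlying unlabeled laminations agree but the $\pm$-labels on simple closed curve components or the weights differ, both points lie in a common join ${\cal J}(\cup_iT_i)$, which by Proposition~\ref{top2}(2) is an embedded separable metrizable subspace; separating open sets in the join are promoted to separating open sets in ${\cal O}$ by combining them with the Requirement~(1) cutoff on the support.

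\smallskip
\noindent\emph{Main obstacle.} The delicate step is Hausdorffness. The three requirements imply uniqueness of sequential limits, but because ${\cal O}$ is only a priori a sequential topology, unique sequential limits alone do not force Hausdorff separation; I must exhibit genuine open sets in ${\cal O}$ and verify that their complements are closed under the threefold notion of convergence (1)--(3). The subtlest case is when $\zeta$ and $\zeta'$ have identical underlying laminations but differ only in weights, since separating neighborhoods must be compatible with every possible Hausdorff sublimit of approximating sequences.
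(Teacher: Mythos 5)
Your sequential-compactness argument and the passage from sequential compactness plus the Lindel\"of property (Corollary~\ref{countable}) to full compactness follow essentially the same route as the paper's proof. The paper's extraction is somewhat more careful, splitting into cases according to whether a minimal component of the Hausdorff limit literally appears as a component of $\xi^j$, is filled by a component of $\xi^j$, or is only seen via subsurface projections; you compress these into the dichotomy ``projection bounded vs.\ unbounded'', which is fine in outline but glosses over the case where $\xi_i^j$ is itself already a point of $\partial{\cal C\cal G}(T_\ell)$.

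The Hausdorff step is where your approach genuinely diverges from the paper and where there is a gap. The paper argues via separability and uniqueness of sequential limits: if $\xi\neq\zeta$ could not be separated, one could extract a sequence converging to both. Your Case~1 instead attempts explicit separating open sets by taking disjoint Hausdorff-open neighborhoods $U,U'$ of ${\rm supp}(\zeta)\neq{\rm supp}(\zeta')$ and ``converting them via Requirement~(1)''. The natural conversion $V=\{\eta:{\rm supp}(\eta)\in U\}$ is \emph{not} ${\cal O}$-open. Requirement~(1) is only a one-sided containment: a Hausdorff limit $\beta$ of ${\rm supp}(\eta^j)$ merely contains ${\rm supp}$ of the ${\cal O}$-limit as a sublamination, and $\beta$ can be far from that sublamination in the Hausdorff metric. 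For instance, if $\zeta=c^+$ for a simple closed curve $c$ and $\eta^j$ are simple closed curves spiraling onto $c\cup d$ with $d$ disjoint from $c$, with the annular projection about $c$ dominating that about $d$, then $\eta^j\to c^+$ in $({\cal X}(S),{\cal O})$ by Requirements~(2)--(3), while ${\rm supp}(\eta^j)\to c\cup d$ stays outside any small Hausdorff neighborhood of $c$. Hence the complement of $V$ is not closed under the three-requirement convergence, so $V$ is not open, and the proposed separating sets fail. Your ``main obstacle'' remark correctly flags that unique sequential limits do not by themselves yield Hausdorffness for a topology defined by a convergence relation, but the explicit construction you offer does not close that gap; one would need to build the separating neighborhoods from the subsurface-projection conditions (2)--(3) even when the supports already differ.
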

\begin{proof} 
As by Proposition \ref{top2} and 
Corollary \ref{countable}, the space ${\cal X}(S)$ is a separable
Lindel\"of space, moreover it is Hausdorff, 
to show that ${\cal X}(S)$ is compact it suffices to show that 
${\cal X}(S)$ is sequentially compact.

Thus 
let $\xi^j=\sum_ia_i^j\xi_i^j\subset {\cal X}(S)$ be any sequence.
We have to construct a convergent subsequence. 
Since the space
of geodesic laminations equipped with the Hausdorff topology is compact, 
by passing to a subsequence we may assume that the 
geodesic laminations
${\rm supp}(\xi^j)=\cup_i\xi_i^j$ converge in the Hausdorff topology to 
a geodesic lamination $\hat \zeta$ with minimal components 
$\zeta_1,\dots,\zeta_k$. 

For each $i\leq k$ let $S_i\subset S$ be the subsurface of $S$ filled by $\zeta_i$. 
Assume by passing to a subsequence that 
\[\xi^j=\sum_{i=1}^u a_i^j \xi_i^{j} + \sum_{\ell >u}a_\ell^j\xi_\ell^j\]
for all $j$ where for each $i\leq u$, the component $\xi_i^j$ fills $S_i$ and that 
none of the components $\xi_\ell^j$ for $\ell>u$ fills any of the surfaces $S_i$. There is no assumption
on the supports of the laminations $\xi_\ell^j$ for $\ell>u$ (besides of course that 
they are pairwise disjoint and disjoint from the surfaces $S_i$ for $i\leq u$). 
By passing to another subsequence, we 
may assume that for $i\leq u$, the labels $\pm$ of the components $\xi_i^j$ 
are constant along the sequence, and that 
the weights $a_i^j\in (0,1]$ of the components $\xi_i^j$ converge to weights
$b_i\geq 0$. In particular, the sums $1-\sum_{i\leq u}a_i^j$ converge to 
$1-\sum_{i\leq u}b_i=\kappa$. 

Since ${\rm supp}(\xi^j)\to \hat \zeta$ in the Hausdorff topology, we know that for 
each $i\leq u$, the laminations $\xi_i^j$ converge in the coarse Hausdorff topology to
$\zeta_i$ and hence $\xi_i^j$ converges to $\zeta_i$ in 
$\partial {\cal C\cal G}(S_i)$ \cite{H06}. 
Thus if $\kappa=0$ then by the definition of the topology
on ${\cal X}(S)$, we know that $\xi^j\to \sum_{i=1}^u b_i\zeta_i$ and we are done.
Namely, putting $S_{u+1}=S\setminus \cup_{i=1}^uS_i$, in this case we  immediately obtain that 
${\rm pr}_{{\cal Z}(\cup_{i=1}^{u+1}S_i)}\xi^j\to \sum_{i=1}^ub_i\zeta_i$. 
Thus we are left with the case $\kappa >0$. 
Viewing $\xi^j=(\sum_{i\leq u}a_i^j\zeta_i)+(\sum_{i\geq u+1}a_i^j\xi_i^j)$
as points in the join of two subspaces of ${\cal X}(S)$, using the above argument 
it suffices in fact to assume that for no
$j$ there exists a component of ${\rm supp}(\xi^j)$ which fills any of the subsurfaces $S_i$.

Then for each $i$, we can consider the subsurface projection 
${\rm pr}_{S_i}({\rm supp}(\xi^j))$ 
of ${\rm supp}( \xi^j)$ into the 
surface $S_i$. Furthermore, by passing to another subsequence,
we may assume that for all $j$ and all $i\leq k$, 
this subsurface projection is non-empty
since the geodesic lamination $\zeta_i$ which fills $S_i$ 
is contained in the limit with respect to the Hausdorff topology 
of the sequence of laminations ${\rm supp}( \xi^j)$. Put differently, 
we may assume that for each $i$ and all $j$, 
the subsurface projection ${\rm pr}_{S_i}({\rm supp}(\xi^j))$ of 
the lamination ${\rm supp}(\xi^j)$ into the subsurface $S_i$ is a 
coarsely well defined point in 
${\cal C\cal G}(S_i)$. Furthermore, using once more that $\zeta_i$ fills $S_i$ and that
$\zeta_i$ is contained in the Hausdorff limit of the sequence ${\rm supp}(\xi^j)$, 
if we denote by $x_i$ the fixed basepoint in ${\cal C\cal G}(S_i)$, then we know that 
$d_{{\cal C\cal G}(S_i)}({\rm pr}_{S_i}({\rm supp}(\xi^j)),x_i)\to \infty$ $(j\to \infty)$.

By passing to another subsequence and reordering indices, 
we may assume that
\[a_1^j=d_{{\cal C\cal G}(S_1)}({\rm pr}_{S_1}({\rm supp}(\xi^j)),x_1)
\geq a_i^j=
d_{{\cal C\cal G}(S_i)}({\rm pr}_{S_i}({\rm supp}(\xi^j)),x_i)\]
for all $i\geq 2$ and all
$j$. Passing to another subsequence, we may assume furthermore that
$a_i^j/a_1^j\to a_i\in [0,1]$ for all $i\geq 2$. Put $a_1=1$; then we have
$\sum_ua_u\geq 1$ and hence defining $b_i=a_i/\sum_ua_u>0$, we conclude that
$\sum_u b_u =1$. It now follows from the definition of the topology on 
${\cal X}(S)$ that $\xi^j\to \sum_ib_i\zeta_i$. This completes the proof that
${\cal X}(S)$ is sequentially compact.
\end{proof}

\begin{lemma} \label{action}
${\rm Mod}(S)$ acts on ${\cal X}(S)$ as a group of transformations.
\end{lemma}
\begin{proof} Observe first that
by construction, ${\rm Mod}(S)$ acts on ${\cal X}(S)$ as a group of bijections
(equivalently, transformations for the discrete topology). 
Thus it suffices to show that this action is continuous for the topology ${\cal O}$.

By the definition of ${\cal O}$, for this it suffices to show the following. Let 
$\xi^j$ be a sequence converging for the topology ${\cal O}$ to a point $\xi$.
Then for every $\phi\in {\rm Mod}(S)$, the sequence $\phi(\xi^j)$ converges
to $\phi(\xi)$. Namely, if this holds true then any $\phi\in {\rm Mod}(S)$ is a closed map,
that is, it maps closed subsets of $({\cal X}(S),{\cal O})$ to closed sets. Since 
each $\phi$ is a bijection, with inverse $\phi^{-1}$, 
this implies that the preimage under $\phi$ 
of any open set is open and hence $\phi$ is continuous. 


We need to check that 
requirement (2) for convergence is natural for the action of $\phi\in {\rm Mod}(S)$.
Now if $S_1,\dots,S_k$ is a partition of $S$ into disjoint 
subsurfaces, then the same holds true for $\phi(S_1),\dots, \phi(S_k)$, and for any 
geodesic lamination $\nu$, we have ${\rm pr}_{{\cal Y}(\cup_i\phi(S_i))}(\phi(\nu))=\phi 
({\rm pr}_{{\cal Y}(\cup_iS_i)}(\nu))$ up to replacing the basepoints $y_i$ 
of ${\cal C\cal G}(\phi(S_i))$ by 
$\phi(x_i)$. Note that $\phi$ also naturally acts on orientations of simple closed 
curves on $S$ as no oriented simple closed curve on $S$ is freely homotopic to its
inverse and hence $\phi$ acts on labelled simple closed curves. 
As for all $i$, we have 
$d_{{\cal C\cal G}(\phi(S_i))}({\rm pr}_{\phi(S_i)}(\xi^j),\phi(x_i))=
d_{{\cal C\cal G}(S_i)}(\xi^j,x_i)\to \infty$ $(j\to \infty)$ 
and the determination of 
the weights of the limit points are computed using ratios of distances to the basepoint 
defined by subsurface projections, with 
the distances tending to infinity along the sequence, we conclude that the requirement (2) 
in the definition of convergence 
is fulfilled for $\phi(\xi^i)$ if it is fulfilled for $\xi^i$. Thus indeed, ${\rm Mod}(S)$ acts on 
${\cal X}(S)$ as a group of transformations which shows the lemma.
\end{proof}

\begin{definition}\label{geometricbd}
The space $({\cal X}(S),{\cal O})$ is called the \emph{geometric boundary}
of ${\rm Mod}(S)$.
\end{definition}

Let us note another naturality property of the geometric boundary of 
${\rm Mod}(S)$. Namely, 
if $S_0\subset S$ is any essential subsuface, then we can construct a geometric 
boundary ${\cal X}(S_0)$ for the mapping class group
${\rm Mod}(S_0)$ of isotopy classes of homeomorphisms of $S_0$ fixing the boundary pointwise. 
As a set, this is a subset of the geometric boundary of 
$S$ which includes the Gromov boundary of the curve graph for peripheral annuli. 
The above construction immediately yields

\begin{corollary}\label{natural}
If $S_0\subset S$ is any subsurface of $S$, then the geometric boundary of 
${\rm Mod}(S_0)$ is 
a closed subspace of the geometric boundary of ${\rm Mod}(S)$. 
\end{corollary}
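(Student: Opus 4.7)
The plan is to proceed in three steps: identify ${\cal X}(S_0)$ as a subset of ${\cal X}(S)$, verify that this subset is closed, and check that the subspace topology coincides with the intrinsic geometric topology on ${\cal X}(S_0)$.

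First I would identify ${\cal X}(S_0)$ with the collection of points $\sum_i a_i\xi_i\in {\cal X}(S)$ whose support $\cup_i\xi_i$ is contained in $S_0$, where a curve that is peripheral in $S_0$ but non-peripheral in $S$ is represented as the labeled core of the corresponding annular subsurface of $S$. Since a family of pairwise disjoint minimal laminations in $S_0$ is still pairwise disjoint when viewed in $S$, and since the $\pm$ label attached to a simple closed curve component depends only on the orientation of the ambient surface which $S_0$ inherits from $S$, this gives a well-defined injection on the underlying sets.

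Next, to prove closedness, let $\xi^j\in {\cal X}(S_0)$ be a sequence converging in $({\cal X}(S),{\cal O})$ to some $\xi\in {\cal X}(S)$. By the first requirement in the definition of convergence in ${\cal O}$, the support ${\rm supp}(\xi)$ is contained in the coarse Hausdorff limit of any convergent subsequence of ${\rm supp}(\xi^j)$. Since the family of geodesic laminations supported in $S_0$ is closed in the Hausdorff topology on compact subsets of $S$, any such limit is supported in $S_0$, so $\xi\in {\cal X}(S_0)$.

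Finally, to check that the inclusion ${\cal X}(S_0)\to {\cal X}(S)$ is a topological embedding, I would verify that a sequence $\xi^j\in {\cal X}(S_0)$ converges in ${\cal X}(S_0)$ to $\xi\in {\cal X}(S_0)$ if and only if it converges in ${\cal X}(S)$ to the same point. The convergence requirements (1)--(3) depend only on the subsurfaces filled by minimal components of the Hausdorff limit of ${\rm supp}(\xi^j)$; when all supports lie in $S_0$, these subsurfaces are subsurfaces of $S_0$, so the curve graphs, their Gromov boundaries, and the joins that appear in Definition~\ref{firststep} are identical whether interpreted inside $S$ or inside $S_0$. The main technical point, and the step I expect to be the principal obstacle, is that the base marking of $S$ and the chosen base marking of $S_0$ project to any essential subsurface $V\subset S_0$ at uniformly bounded distance in ${\cal C\cal G}(V)$; hence the distances and their ratios used to define convergence agree up to a bounded additive error which is absorbed by the passage to infinite distances. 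Once this bounded-error comparison is verified, the equivalence of the two notions of convergence follows by unpacking Requirements~(1)--(3), and the corollary is immediate from Proposition~\ref{top2}.
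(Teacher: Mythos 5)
Your proposal is correct and fills in the details of the paper's terse argument, which simply asserts that the corollary follows immediately from the construction; the closedness observation in your second step is essentially Example~\ref{join} applied to a single subsurface, and your third step unpacks the compatibility of the convergence requirements, including the key point that one may take the base marking of $S_0$ to be the subsurface projection of the base marking of $S$ so that projections to any $V\subset S_0$ agree up to a uniform error. One minor shortcut you could have used: once the inclusion is known to be a continuous injection, closedness of the image follows for free from compactness of ${\cal X}(S_0)$ (Proposition~\ref{compactbd} applied to $S_0$) together with the Hausdorff property of ${\cal X}(S)$.
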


From what we achieved so far, the second part of Proposition \ref{details} from the introduction is now immediate.

\begin{proof}[Proof of (2) of Proposition \ref{details}]
Let $\phi\in {\rm Mod}(S)$ be a Nielsen-Thurston mapping class. Then there are pairwise disjoint 
subsurfaces $S_1,\dots,S_{k+1}$, some of them may be annuli, so that $S=\cup_iS_i$, that 
$\phi$ preserves $S_i$ for each $i$ and that the following holds true. If $i\leq k$ and if
$S_i$ is distinct from an annulus, then $S_i$ also is distinct from a pair of pants and $\phi$ acts
on $S_i$ as a pseudo-Anosov mapping class preserving each boundary component of $S_i$. 
If $i\leq k$ and if $S_i$ is an annulus, then $\phi$ acts on $S_i$ as a Dehn (multi)-twist, and the restriction of 
$\phi$ to $S_{k+1}$ is the trivial mapping class. 

If $\xi\in {\cal X}(S)$ is fixed by $\phi$ and not contained in the  obvious fixed point set, then 
$\xi=\sum_j a_j\xi_j$ where either for at least one $j$, the lamination $\xi_j$ is not supported in 
any of the subsurfaces $S_i$, or for at least one $j$ the lamination is supported in one of the surfaces
$S_i$ but it is not fixed by $\phi\vert S_i$. But then Example \ref{limit} and the definition of the topology on 
${\cal X}(S)$ together yield that $\xi$ is not a fixed point for the action of $\phi$ on ${\cal X}(S)$.
\end{proof}

\section{A small boundary for  ${\rm Mod}(S)$}\label{small}

The purpose of this section is to  construct a topology on 
$\overline{\cal T}(S)={\cal T}_\epsilon(S)\cup {\cal X}(S)$ which restricts to the 
given topologies on ${\cal T}_\epsilon(S)$ and on 
${\cal X}(S)$ and such that with respect to this topology, $\overline{\cal T}(S)$ is a compact
${\rm Mod}(S)$-space. 
The construction of this topology is carried out using
Teichm\"uller geometry. 

\begin{remark}\label{dhs}
In \cite{DHS17}, there is a construction of a topology 
on ${\rm Mod}(S)\cup {\cal X}(S)$ (where however the topology on ${\cal X}(S)$
differs from the one we introduced) using the combinatorics of hierarchical hyperbolic
spaces and such that with respect to this topology, 
${\rm Mod}(S)\cup {\cal X}(S)$ is compact. 
Instead our construction uses geometric tools relying 
on Teichm\"uller theory which are not available for hierarchically hyperbolic 
spaces, with the aim at 
capturing features of ${\rm Mod}(S)$ which resemble properties of a 
${\rm CAT}(0)$ group. It is possible that there is a relation of 
our construction to the one in 
 \cite{DHS17}.  A verification would require a translation of our construction into
 the context of hierarchical hyperbolicity, and it is unclear whether this can be done, see
however \cite{DMS25}. 
\end{remark}

\subsection{The thick part of Teichm\"uller space} 

In this section we consider a connected oriented surface $S$ of finite type without boundary different
from a sphere with at most three punctures. The Teichm\"uller space
${\cal T}(S)$ is the space of all complete finite area hyperbolic metrics on $S$.
By the collar lemma for hyperbolic surfaces, there exists
a number $\epsilon_0>0$ not depending on $S$ 
with the following property. For any  hyperbolic metric on $S$, 
any two closed geodesics
$\gamma_1,\gamma_2$ of length
$\ell(\gamma_1),\ell(\gamma_2)\leq \epsilon_0$ are disjoint.

Let ${\rm syst}:{\cal T}(S)\to (0,\infty)$ be the
\emph{systole function} which associates to a point in ${\cal T}(S)$ its systole, that is, the shortest
length of an essential closed curve (closed geodesic) on $S$.
For $\epsilon \leq \epsilon_0$ define the \emph{$\epsilon$-thick part}
${\cal T}_\epsilon(S)$ of the Teichm\"uller space ${\cal T}(S)$ 
of marked hyperbolic metrics on $S$ by
\[{\cal T}_{\epsilon}(S)=\{ X\in {\cal T}(S)\mid
  \text{ syst}(X)\geq \epsilon\}.\]
The following statement is well known. We refer to 
Proposition 1.1 of \cite{JW10} for an explicit account.

\begin{theo}\label{model}
For $\epsilon < \epsilon_0$, the following holds.
  \begin{enumerate}
\item     
The subspace ${\cal T}_\epsilon(S)\subset {\cal T}(S)$ is non-empty, closed, connected
and stable under ${\rm Mod}(S)$.
Its quotient under the action of ${\rm Mod}(S)$ is
  compact. 
\item ${\cal T}_\epsilon(S)$ is a real-analytic manifold with corners and
  hence admits a ${\rm Mod}(S)$-invariant triangulation such that
  ${\rm Mod}(S)\backslash {\cal T}_\epsilon(S)$ is a finite
  $CW$-complex.
\end{enumerate}
\end{theo}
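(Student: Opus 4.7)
The plan is to reduce the statement to classical results in Teichm\"uller theory, with most work already done in \cite{JW10,J14}; the task is to assemble them. I would present the argument in three passes corresponding to the listed claims.

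For part (1), non-emptiness follows from the existence of hyperbolic structures of arbitrarily large systole (e.g.\ via explicit Fenchel--Nielsen coordinates where all lengths are chosen bounded below). Stability under ${\rm Mod}(S)$ is immediate because the systole depends only on the unmarked length spectrum, which ${\rm Mod}(S)$ preserves. Compactness of the quotient ${\rm Mod}(S)\backslash {\cal T}_\epsilon(S)$ is exactly Mumford's compactness theorem: the $\epsilon$-thick part of moduli space is compact. Connectedness is the most delicate of the four properties; my approach would be to use a ${\rm Mod}(S)$-equivariant version of the function $X\mapsto -\log\,{\rm systole}(X)$, which is continuous and proper modulo ${\rm Mod}(S)$, and to show that one can always decrease $-\log\,{\rm systole}$ from a given $X\in {\cal T}_\epsilon(S)$ toward the interior of ${\cal T}_\epsilon(S)$ by earthquake deformations along the systolic curves. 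Alternatively, one may invoke directly the fact that ${\cal T}_\epsilon(S)$ is a deformation retract of ${\cal T}(S)$ (stated in the Ji--Wolpert theorem quoted in the introduction), which implies connectedness since ${\cal T}(S)\cong \mathbb{R}^{6g-6+2m}$ is connected.

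For part (2), the key observation is that ${\cal T}_\epsilon(S)$ is cut out by the collection of real-analytic inequalities $\ell_c(X)\geq \epsilon$ as $c$ ranges over all isotopy classes of essential simple closed curves, together with the inequality $\mathrm{systole}(X)\geq\epsilon$. Near a point $X$ the only active inequalities are those for the (finitely many) curves $c_1,\dots,c_r$ with $\ell_{c_i}(X)=\epsilon$, and by the collar lemma these are pairwise disjoint, hence of length $\leq\epsilon_0$. The hard technical step, carried out by Ji and Wolpert, is to show that the differentials $d\ell_{c_1},\dots,d\ell_{c_r}$ are linearly independent at $X$; this gives ${\cal T}_\epsilon(S)$ the structure of a real-analytic manifold with corners of codimension $r$ along the stratum where exactly $\{c_1,\dots,c_r\}$ are systolic. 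I would cite \cite{JW10} for this linear independence, which is the main obstacle in a direct proof and relies on a careful use of Wolpert's formulas for derivatives of length functions in Fenchel--Nielsen coordinates.

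Finally, for the equivariant triangulation and finite CW structure on the quotient, I would invoke Illman's equivariant triangulation theorem, applied to the proper cocompact action of ${\rm Mod}(S)$ on the real-analytic manifold with corners ${\cal T}_\epsilon(S)$ (the corner structure poses no obstruction, as one can double across corners or triangulate the stratification by smooth manifolds compatibly). Cocompactness from part (1) together with local finiteness of the resulting triangulation then produces a finite ${\rm Mod}(S)$-equivariant $CW$-complex structure on the quotient, completing the proof.
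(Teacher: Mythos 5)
The paper gives no proof of this theorem at all: it is introduced as a well-known statement and attributed to Proposition~1.1 of \cite{JW10}, and the reader is simply directed there. Your proposal is therefore not being compared against an argument in the paper, but it is a sensible unpacking of what such a proof involves, and for the genuinely hard points (linear independence of the length differentials of the systolic curves, which underlies the manifold-with-corners structure) you defer to the same source \cite{JW10} that the paper does, so there is no real divergence in approach. A few small remarks on the details you supply: stability under ${\rm Mod}(S)$ and Mumford compactness for part (1) are as you say, and invoking the deformation retraction from Ji--Wolpert to get connectedness is fine provided one takes the logical ordering in \cite{JW10,J14} at face value (the retraction is built there after the corner structure is established, so your alternative earthquake argument is the safer self-contained route). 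For the triangulation, you should be slightly careful with the form of Illman's theorem you cite: the classical version is for smooth actions of compact Lie groups, so you want the extension to proper smooth actions of discrete groups (Illman 2000, or one can reduce to finite group actions on the compact quotient orbifold after passing to a finite-index torsion-free subgroup and averaging). This is a presentational point, not a gap; the conclusion is correct and matches the structure the paper asserts.
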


As a consequence, ${\cal T}_\epsilon(S)$ is a topological manifold with boundary and interior
$\mathring{\cal T}_\epsilon(S)=\{X\mid {\rm syst}(X)>\epsilon\} \subset {\cal T}_\epsilon(S)$.

There is a coarsely well defined map
\[\Upsilon:{\cal T}(S)\to {\cal C\cal G}(S)\] which
maps a marked hyperbolic metric to a closed non-contractible curve
 of minimal length. 
Coarsely well defined means that the map depends on choices, but 
the images of a point $X\in {\cal T}(S)$ for 
any two choices of such a map are of distance at most two. 

Call a map $\Psi:{\cal T}(S)\to {\cal T}(S)$ \emph{coarsely
  $\Upsilon$-invariant} if
$d(\Upsilon(\Psi(X)),\Upsilon(X))\leq 2$ for all $X$. The following
is due to Ivanov \cite{Iv02} if one replaces the mapping class group by a torsion free
subgroup. The full version is 
Theorem 1.2 of \cite{JW10}, see 
also Theorem 3.9 of \cite{J14}.

\begin{theo}[Ji-Wolpert]\label{ji}
For $\epsilon <\epsilon_0/3$  there is a ${\rm Mod}(S)$-equivariant
 coarsely $\Upsilon$-invariant deformation retraction   
${\cal T}(S)\to {\cal T}_\epsilon(S)$. 
\end{theo}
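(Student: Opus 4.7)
The plan is to construct the retraction stratum by stratum using the Bers decomposition of Teichmüller space, where the strata are indexed by the isotopy classes of multicurves that can appear as the set of curves of length at most $\epsilon$. Concretely, for a multicurve $C=\{c_1,\dots,c_r\}$, let $\mathcal{T}_C\subset \mathcal{T}(S)$ be the (locally closed) set of hyperbolic structures where precisely the curves in $C$ have length less than $\epsilon$. The collar lemma (via the choice $\epsilon<\epsilon_0$) guarantees that the $c_i$ are pairwise disjoint, so we may extend $C$ to a pants decomposition and use Fenchel--Nielsen coordinates $(\ell_\gamma,\tau_\gamma)$ adapted to it. In these coordinates, the thin stratum factors as a product of a Margulis region for each $c_i$ (parameterized by $(\ell_{c_i},\tau_{c_i})$ with $\ell_{c_i}<\epsilon$) and a factor in which $\ell_\gamma\geq\epsilon$ for all remaining $\gamma$.

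First I would define the retraction on each thin-factor coordinate $(\ell_{c_i},\tau_{c_i})$ by monotonically increasing $\ell_{c_i}$ up to $\epsilon$ while leaving $\tau_{c_i}$ fixed, with the formula chosen so that the deformation fixes points with $\ell_{c_i}=\epsilon$ and depends only on $\ell_{c_i}$. Taking products over the $c_i$ gives a deformation retraction of each stratum onto its portion with $\ell_{c_i}=\epsilon$ for all $i$. Next I would glue these across strata: the continuity across faces comes from the fact that if $c_i$ has length exactly $\epsilon$, the corresponding factor of the retraction is already the identity, so adding or removing $c_i$ from the active multicurve introduces no discontinuity. This produces a single continuous deformation retraction $\mathcal{T}(S)\to \mathcal{T}_\epsilon(S)$.

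For $\mathrm{Mod}(S)$-equivariance, observe that the construction only depends on the (isotopy classes of) curves whose length is at most $\epsilon$ and on the corresponding Fenchel--Nielsen length/twist coordinates, all of which transform naturally under the mapping class group action. Since any element of $\mathrm{Mod}(S)$ carries the stratum $\mathcal{T}_C$ to $\mathcal{T}_{\phi(C)}$ and the retraction formula within a stratum depends only on the length functions $\ell_{c_i}$, which are $\mathrm{Mod}(S)$-equivariant, the assembled retraction commutes with the $\mathrm{Mod}(S)$-action.

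For coarse $\Upsilon$-invariance, the crucial observation is that along the retraction path from $X$ to $\Psi(X)$, the only quantities that change are the lengths of the curves $c_1,\dots,c_r$ in the defining multicurve of $X$'s stratum, and these lengths increase monotonically from values in $(0,\epsilon)$ up to $\epsilon$. Hence any curve that is \emph{not} in $C\cup\{\text{pants curves of length }\geq\epsilon\}$ has its length at $\Psi(X)$ bounded in terms of its length at $X$ by a universal constant depending only on $\epsilon$, and the collar lemma forces any systole of $\Psi(X)$ either to lie in $C$ or to intersect $C$ in a bounded way. In either case one obtains that $\Upsilon(\Psi(X))$ is at curve-graph distance at most two from $\Upsilon(X)$, using that disjoint (or boundedly intersecting) curves are at distance at most $1$ (or $2$) in $\mathcal{CG}(S)$. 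The main obstacle I expect is this last step: ensuring uniformly that the new systole after the deformation is at curve-graph distance $\leq 2$ from a choice of original systole, which requires a careful case analysis depending on whether the original systole belonged to $C$ or not, combined with collar-lemma bounds on how lengths of other curves can change when one pushes the length of a short curve up to $\epsilon$.
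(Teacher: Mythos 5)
Your proposal takes a genuinely different route from the one the paper uses. The paper relies on Ji--Wolpert's construction of a non-canonical, smooth, ${\rm Mod}(S)$-equivariant vector field on ${\cal T}(S)\setminus {\cal T}_\epsilon(S)$ that vanishes on ${\cal T}_\epsilon(S)$ and is nowhere-vanishing on the complement, and takes the flow of this vector field as the retraction. You instead try an explicit stratum-by-stratum Fenchel--Nielsen deformation. Unfortunately your construction has a gap at its core.

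The gap is in the claim that ``the thin stratum factors as a product of a Margulis region for each $c_i$ (parameterized by $(\ell_{c_i},\tau_{c_i})$) and a factor in which $\ell_\gamma\geq\epsilon$ for all remaining $\gamma$.'' This product structure is false. It is true that $\ell_\gamma$, for a curve $\gamma$ disjoint from $c_i$, does not depend on the twist parameter $\tau_{c_i}$ (a Fenchel--Nielsen earthquake along $c_i$ restricts to an isometry of $S\setminus c_i$). But $\ell_\gamma$ \emph{does} depend on the length parameter $\ell_{c_i}$: the hyperbolic geometry of every pair of pants having $c_i$ as a boundary component changes as $\ell_{c_i}$ varies, so any interior geodesic passing through such a pair of pants (even without meeting $c_i$) changes length. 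As a consequence, holding all other Fenchel--Nielsen coordinates fixed and increasing $\ell_{c_i}$ to $\epsilon$ may push the length of some $\gamma\notin C$ \emph{below} $\epsilon$; the endpoint of your retraction need not lie in ${\cal T}_\epsilon(S)$ at all. This is exactly the subtlety that a carefully constructed global vector field avoids: the Ji--Wolpert retraction is engineered so that the flow line stops precisely when it reaches $\partial{\cal T}_\epsilon(S)$, rather than being pinned to a prescribed target value of $\ell_{c_i}$.

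There is a secondary gap in the gluing and equivariance argument. Your retraction formula depends on an auxiliary choice of pants decomposition $P_C\supset C$ for each stratum ${\cal T}_C$, and the coordinates and hence the retraction path change when this choice changes. Your continuity argument only treats faces where some $c_i$ has its length rise to $\epsilon$ (so drops out of $C$); it does not treat faces where a new curve $\gamma$ has its length fall to $\epsilon$ and must be added to the active multicurve, which in general forces a change of pants decomposition and hence of the retraction formula. For equivariance you would further need the assignment $C\mapsto P_C$ to be ${\rm Mod}(S)$-equivariant, which you do not address. These are the reasons the construction in the literature proceeds via a globally defined, partition-of-unity style smooth vector field rather than via stratum-by-stratum Fenchel--Nielsen coordinates.
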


The deformation retraction is constructed as follows. First Ji and Wolpert construct a ${\rm Mod}(S)$-invariant 
continuous uniquely integrable vector field $V$ on ${\cal T}(S)$ with the following properties (p.9 of \cite{JW10}).
\begin{enumerate}
\item[(a)] $V ({\rm syst}) =1$ on $\{{\rm syst} \leq 2\epsilon\}$ and
\item[(b)] $V$ vanishes on $\{ {\rm syst} \geq 3\epsilon\}$.
\end{enumerate}
The deformation retraction is then given by the time $\epsilon$-map of the flow defined by $V$.
Note however that the image of ${\cal T}(S)$ under this map is the interior
of ${\cal T}_\epsilon(S)$. Since the time $\epsilon$ map of a continuous flow is a homeomorphism,
we obtain the following statement as an immediate consequence. 

\begin{corollary}\label{homeomorphism}
For every $\epsilon <\epsilon_0/3$ there is a ${\rm Mod}(S)$-equivariant homeomorphism 
$\Lambda_\epsilon:{\cal T}(S)\to \mathring {\cal T}_\epsilon(S)$.
\end{corollary}

For our purpose, the difficulty arises  that we need to construct
contractible subsets of ${\cal T}_\epsilon(S)$ and not of its interior. But the closure of 
a contractible open set in a smooth manifold may not be contractible. 
The following construction will allow us to address this issue.

Define the \emph{small closure} $\overline{A}_{\rm small}$ of a subset 
$A$ of $\mathring {\cal T}_\epsilon(S)$ to be the union of $A$ with the set of all points 
$z\in \partial {\cal T}_\epsilon(S)$
so that $z$ has a neighborhood $U$ in ${\cal T}_\epsilon(S)$ with 
$U\cap ({\cal T}_\epsilon(S)\setminus \partial {\cal T}_\epsilon(S))\subset A$. 
Note that $\overline{A}_{\rm small}\setminus A$ is an \emph{open} subset of $\partial {\cal T}_\epsilon(S)$. 
More precisely, we have.

\begin{lemma}\label{openinclosed}
\begin{enumerate}
\item The small closure in ${\cal T}_\epsilon(S)$ of an open subset of 
$\mathring{\cal T}_\epsilon(S)$ is open in ${\cal T}_\epsilon(S)$.
\item If $U\subset {\cal T}_\epsilon(S)$ is open, then $U\subset 
\overline{U\cap \mathring{\cal T}_\epsilon(S)}_{\rm small}$.
\end{enumerate}
\end{lemma}
\begin{proof}
If $U\subset \mathring{\cal T}_\epsilon(S)$ is open, then as $\mathring {\cal T}_\epsilon(S)\subset 
{\cal T}_\epsilon(S)$ is open, a point $x\in U\subset \overline{U}_{\rm small}$ has a neighborhood
in ${\cal T}_\epsilon(S)$ which is contained in $U$.

On the other hand, if $x\in \overline{U}_{\rm small}\setminus U$ then it follows from the definition
of $\overline{U}_{\rm small}$ that $x$ has a neighborhood in 
${\cal T}_\epsilon(S)$ entirely contained in $\overline{U}_{\rm small}$. This shows the first part of the lemma.

The second part of the lemma is immediate from the definitions.
\end{proof}

\begin{lemma}\label{smallcontr}
The small closure of a contractible subset of ${\cal T}_\epsilon(S)$ is contractible.
\end{lemma}
\begin{proof}
It suffices to deformation retract the small closure $\overline{A}_{\rm small}$ of a contractible subset $A$ 
of ${\cal T}_\epsilon(S)$ into 
$A$. The composition of this deformation retraction with a deformation retraction of $A$ to a point then shows that
$\overline{A}_{\rm small}$ is contractible. 

Since ${\cal T}_\epsilon(S)\subset {\cal T}(S)$ is a manifold with corners, 
for $z\in \overline{A}_{\rm small}\setminus A$ there is an open neighborhood $U_z$ of $z$ in 
$\overline{A}_{\rm small}\cap {\cal T}_\epsilon(S)\setminus {\cal T}_{2\epsilon}(S)$
which is homeomorphic to the set 
$B_0=\{(x_1,\dots,x_n)\in \mathbb{R}^n\mid \sum_ix_i^2<1,x_1\geq 0\}$,
with $z$ corresponding to $0$, and such that $B_0\cap \{x_1>0\}\subset A$. Let again
$V$ be the vector field on ${\cal T}(S)$ with properties (a),(b) as preceding Corollary \ref{homeomorphism}
and let $\psi:B_0\to [0,1]$ be a smooth function with $\psi >0$ on $\sum_ix_i^2<1$ and 
$\psi\equiv 0$ on $\sum_i x_i^2=1$. Via the identification of $B_0$ with the neighborhood $U_z$  
of $z$ in $\overline{A}_{\rm small}$ and constant extension to zero outside this neighborhood, 
we can view $\psi$ as a continuous
function on ${\cal T}_\epsilon(S)$ which is supported in the closure $\overline{U}_z$ of $U_z$. 
The vector field 
$\psi V$ then defines a flow supported in $\overline{U}_z$ 
(which is just a local continuous time change of the flow of $V$) which  
deformation retracts $\overline{U}_z$ into $\overline{U}_z\setminus (U_z\cap (\overline{A}_{\rm small}\setminus A))$. 
Its time one map is 
a homeomorphism onto its image.

As a consequence, for every $z\in \overline{A}_{\rm small}\setminus A$ there is a deformation retraction of 
$\overline{A}_{\rm small}$ which moves a neighborhood of $z$ in $\overline{A}_{\rm small}$ into $A$ 
and such that the intersection of the resulting set with $\partial {\cal T}_\epsilon(S)$ is contained in the complement of 
a neighborhood of $z$ 
in the intersection of $\overline{A}_{\rm small}$ with $\partial {\cal T}_\epsilon(S)$.

Since $\overline{A}_{\rm small}\setminus A$ is an open subset of the topological manifold 
$\partial {\cal T}_\epsilon(S)$ and hence it is a topological manifold, we can find a countable 
covering ${\cal W}$ of $\overline{A}_{\rm small}\setminus A$ so that each set 
$W_i\in {\cal W}$ is the intersection with $\overline{A}_{\rm small}\setminus A$ of an open set
$U_i$ of ${\cal T}_\epsilon(S)$ as described above.
Let $\{\zeta_i\mid i\}$ be a partition of 
unity subordinate to the covering of $\cup_iU_i\supset \overline{A}_{\rm small}\setminus A$
by the sets $U_i$.  
Using the above notations but writing $\psi_i$ for the function $\psi$ on $U_i\sim B_0$,  
for each $i$ the vector field 
$\zeta_i\psi_i V$ defines a local flow which moves $W_i\cap \{\zeta_i >0\}$ into $A$. 
Then the flow of the vector field $\sum_i \zeta_i \psi_iV$ is well defined, and the image of
$\overline{A}_{\rm small}\setminus A$ under its time one map is contained in $A$. This is what we wanted to show.
\end{proof}

\subsection{A topology on ${\cal T}_\epsilon(S)\cup {\cal X}(S)$} 

In this subsection we allow the surface $S$ to be disconnected. 
All components of $S$ may have punctures, but they have empty boundary unless
the component is an annulus. 
The construction of a topology on 
$\overline{\cal T}(S)={\cal T}_\epsilon(S)\cup {\cal X}(S)$ 
is similar to the construction of the topology on ${\cal X}(S)$.
We begin with having a short look at an annulus.

\begin{example}\label{annulus}
In the case $S$ is an annulus, then 
we have ${\cal T}(S)=\mathbb{R}$, ${\cal X}(S)=\{+,-\}$. If we equip 
$\overline{\cal T}(S)$ with the topology of the compactification of $\mathbb{R}$ 
which is homeomorphic to a compact interval and is obtained by
attaching two points $\pm \infty$, then this construction defines 
an ${\cal E\cal Z}$-structure for the infinite cyclic group of Dehn twists along 
the core curve of the annulus.  \hfill $\blacksquare$
\end{example}

If $S=\sqcup_i S_i$ then 
the Teichm\"uller space ${\cal T}(S)=\prod{\cal T}(S_i)$ of $S$ is the product of the 
Teichm\"uller spaces of the components of $S$, and we have
${\cal T}_\epsilon(S)=\prod {\cal T}_\epsilon(S_i)$. 
There exists a constant $\rho=\rho(S)>\epsilon_0$, a so-called 
\emph{Bers constant}, such that any marked hyperbolic surface $X\in {\cal T}(S)$
admits a pants decomposition by simple closed curves 
of $X$-length at most $\rho$ \cite{Bu92}.  
If $X\in {\cal T}_\epsilon(S)$, then
by possibly enlarging $\rho$, we may in fact assume that
$X$ admits a marking $\mu(X)$ consisting of simple closed 
curves of length at most $\rho$. We call such a marking
\emph{short} for $X$. Namely, it is a consequence of the collar lemma
\cite{Bu92} that for any surface $X\in {\cal T}_\epsilon(S_i)$ 
there exists a compact subset of $X$ whose diameter is bounded from above by a constant
only depending on the topological type of $S_i$ and $\epsilon$ which contains every 
simple closed curve on $S_i$. From this fact it is easy to construct from a pants 
deomposition whose components have length smaller than a Bers constant 
an explicit short marking. 

Using again the collar lemma \cite{Bu92}, 
the geometric intersection number between any 
two simple closed curves on $S$ of $X$-length at most $\rho$
is bounded from above by a universal constant. 
In particular, the marking $\mu(X)$ defines a subset of uniformly
bounded diameter in ${\cal C\cal G}(S)$
(see \cite{MM99} for more information). Moreover, as before, 
for every proper essential not necessarily connected subsurface $S_0$ of $S$, 
the subsurface
projections of the marking curves from $\mu(X)$ coarsely define a marking ${\rm pr}_{S_0}(\mu(X))$ of $S_0$.

The following definition is geared at overcoming some purely technical difficulties in the 
construction of an ${\cal E\cal Z}$-structure for ${\rm Mod}(S)$. 

\begin{definition}\label{nice}
A topology on $\overline{\cal T}(S)={\cal T}_\epsilon(S)\cup {\cal X}(S)$ is called \emph{nice} if it restricts to the 
given topologies on ${\cal T}_\epsilon(S)$  and on ${\cal X}(S)$, with ${\cal T}_\epsilon(S)\subset
\overline{\cal T}(S)$ open and dense, and if 
every point $\xi\in {\cal X}(S)$ has a neighborhood basis 
consisting of sets $U_\xi$ so that $U_\xi\cap \mathring{\cal T}_\epsilon(S)$ is 
open and contractible. 
\end{definition}

We are now ready to state a more precise version of the third part of Theorem \ref{main}.
In its formulation, we do not require $S$ to be connected.

\begin{theo}\label{mainprecise}
For a surface $S$ of finite type there exists a topology on $\overline{\cal T}(S)$ with the following 
properties.
\begin{enumerate}
\item The topology is nice.
\item Let $\xi=\sum_{i=1}^k a_i\xi_i\in {\cal X}(S)$ and for each $i\leq k$ let $S_i$ be the surface filled 
by $\xi_i$. 
Put $S_{k+1}=S\setminus \cup_{i=1}^k S_i$. 
A sequence $X^j\subset {\cal T}_\epsilon(S)$ converges to $\xi$ if and only if 
$({\rm pr}_{S_1}(\mu(X^j)),\dots,{\rm pr}_{S_{k+1}}(\mu(X^j)))\to \xi$ in 
${\cal Y}(\cup_{i=1}^{k+1}S_i)$. 
\item The pair $(\overline{\cal T}(S),{\cal X}(S))$ is an ${\cal E\cal Z}$-structure for ${\rm Mod}(S)$.
\end{enumerate}
\end{theo}

The remainder of this article is devoted to the proof of Theorem \ref{mainprecise}.

\begin{example}\label{farey2}
If $S$ is a once punctured torus or a four-holed sphere, then we saw in Example \ref{farey}
that ${\cal X}(S)$ has a natural identification with the Gromov boundary $\partial {\rm Mod}(S)$ 
of the mapping class group of $S$.
Recall that ${\cal X}(S)$ is a disjoint union of the set $\partial {\cal C\cal G}(S)$ 
of minimal filling geodesic laminations 
with the set of all labeled simple closed curves. 

Since ${\rm Mod}(S)$ is virtually free and hence 
a hyperbolic group which acts properly and cocompactly
on ${\cal T}_\epsilon(S)$, the space $\overline{\cal T}(S)$ has a natural topology 
which is inherited from the topology of the union of ${\rm Mod}(S)$ with its Gromov boundary.
The restrictions of this topology to the subsets ${\cal T}_\epsilon(S)$ and ${\cal X}(S)$  are
the given topologies. 

Any proper essential subsurface of $S$ either is a three-holed sphere (which does not play a role
in our discussion) or an annulus. Let $A\subset S$
be such an annulus. We claim that
the topology of $\overline{\cal T}(S)$ 
is consistent with the topology of the compactification $\mathbb{R}\cup \{\pm \infty\}$ 
of the Teichm\"uller space ${\cal T}(A)$ in the sense of property (2) in Theorem \ref{mainprecise}. 

Namely, let $c$ be the core curve of $A$ and let 
$c_+\in {\cal X}(S)$ be the curve $c$ equipped with a label. 
Denote by $\langle T_c\rangle$ the infinite cyclic group of Dehn twists about $c$ and
assume that $c_+$ corresponds to the limit point of the sequence $T_c^k$ as 
$k\to \infty$. Note that $\langle T_c\rangle$ is a quasi-convex subgroup of ${\rm Mod}(S)$.
Let $X\in {\cal T}_\epsilon(S)$
be an arbitrary point. 
With respect to the topology of ${\rm Mod}(S)\cup \partial {\rm Mod}(S)$ as the union 
of a hyperbolic group with its Gromov boundary, with ${\rm Mod}(S)$ identified with the orbit of $X$,
a sequence of points $X_i =\phi_i(X)\subset {\cal T}_\epsilon(S)$
for $\phi_i\in {\rm Mod}(S)$ 
 converges to $c_+\in {\cal X}(S)$ 
if the shortest distance projections 
of the elements $\phi_i$ into the 
quasi-convex infinite cyclic subgroup 
$\langle T_c\rangle$ 
converge to $c_+$.
Translated into properties of the subsurface projections of points in the Farey graph
as explained in Example \ref{oncepuncturedtorus}, 
this just means that the topology on $\overline{\cal T}(S)$ fulfills property (2) 
in Theorem \ref{mainprecise}. 

We can also check that the topology is nice. Namely, recall that
${\cal T}_\epsilon(S)$ can be identified with the complement in the hyperbolic
plane $\mathbb{H}^2$ of a ${\rm Mod}(S)$ invariant 
countable collection of horoballs whose closures are pairwise disjoint.
The horoballs are based at the rational points of $\partial \mathbb{ H}^2$ and are fixed by an 
infinite cyclic subgroup of ${\rm Mod}(S)$ 
of parabolic isometries. 

Let $H\subset \mathbb{H}^2$ be such a horoball, with boundary $\partial H$, fixed by the
parabolic group $G\subset {\rm Mod}(S)$. This is a convex subset of $\mathbb{H}^2$. 
Let $\eta:\mathbb{R}\to \partial H$ be a parameterization of $\partial H$
by arc length.
The geodesics in $\mathbb{H}^2$ which are asymptotic to the 
fixed point of $G$ in $\partial \mathbb{H}^2$ 
foliate $\mathbb{H}^2$ and determine a shortest distance projection
$P:\mathbb{H}^2\setminus \mathring H\to \partial H$. For each integer $m\geq 1$ the
set $U_m=P^{-1}(\eta(m,\infty))\setminus \eta(m,\infty)$ is contractible and 
intersects $\mathring {\cal T}_\epsilon(S)$ in a contractible open set whose 
small closure is a neighborhood of 
the labeled point $\eta(\infty)=c_+$. These neighborhoods define a countable neighborhood
basis of $c_+$ which are small closures of contractible open subsets of $\mathring {\cal T}_\epsilon(S)$.

Alternatively, let $V_m \supset U_m$ be the union of all 
leaves of the foliation which pass through $\eta(m,\infty)$. Clearly $V_m$ is contractible. 
The small closures of the images of the sets $V_m$ under 
the homeomorphism
${\cal T}(S)\to \mathring {\cal T}_\epsilon(S)$ then define another neighborhood basis 
of $c_+$ in the above topology of $\overline{\cal T}(S)$ consisting of small closures of 
contractible open subsets of $\mathring{\cal T}_\epsilon(S)$.

As neighborhood bases of minimal filling laminations will be discussed in detail in a more general
context, we omit the discussion here. \hfill $\blacksquare$
\end{example}

\begin{proposition}\label{topology2}
There exists a Hausdorff topology ${\cal O}_0$ 
on $\overline{\cal T}(S)={\cal T}_\epsilon(S)\cup {\cal X}(S)$ with the property that a 
set $A\subset \overline{\cal T}(S)$ is closed for
${\cal O}_0$ if and only if the following 
holds true.
\begin{enumerate}
\item $A\cap {\cal T}_\epsilon(S)$ is closed in ${\cal T}_\epsilon(S)$, 
and $A\cap {\cal X}(S)$ is closed in ${\cal X}(S)$.
\item If $X_j\subset A\cap {\cal T}_\epsilon(S)$ is a sequence which converges to 
$\xi\in {\cal X}(S)$ in the sense of (2) of 
Theorem \ref{mainprecise}, then $\xi\in A$.
\end{enumerate}
\end{proposition}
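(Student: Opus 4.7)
The approach parallels the proof of Lemma \ref{topology} and of the first assertion of Proposition \ref{top2}. Let $\mathcal{F}$ denote the collection of subsets $A \subset \overline{\mathcal{T}}(S)$ satisfying conditions (1) and (2) of the statement. I will show that $\mathcal{F}$ is the family of closed sets of some topology $\mathcal{O}_0$, i.e., that $\emptyset, \overline{\mathcal{T}}(S) \in \mathcal{F}$ (both trivial) and that $\mathcal{F}$ is stable under arbitrary intersections and under finite unions.

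Stability under arbitrary intersections is straightforward: if $A = \bigcap_\alpha A_\alpha$ with each $A_\alpha \in \mathcal{F}$, then $A \cap {\cal T}_\epsilon(S) = \bigcap_\alpha (A_\alpha \cap {\cal T}_\epsilon(S))$ and $A \cap {\cal X}(S) = \bigcap_\alpha (A_\alpha \cap {\cal X}(S))$ are closed as intersections of closed sets in ${\cal T}_\epsilon(S)$ and ${\cal X}(S)$ respectively, which gives condition (1); and for condition (2), any sequence $X_j \subset A \cap {\cal T}_\epsilon(S)$ lies in every $A_\alpha \cap {\cal T}_\epsilon(S)$, so if it converges to some $\xi \in {\cal X}(S)$ in the sense of Definition \ref{convergence2}, then $\xi \in A_\alpha$ for every $\alpha$ and hence $\xi \in A$.

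For finite unions, let $A = A_1 \cup \cdots \cup A_k$ with each $A_i \in \mathcal{F}$. Condition (1) is immediate from the fact that finite unions of closed sets are closed. For condition (2), given a sequence $X_j \subset A \cap {\cal T}_\epsilon(S)$ converging to $\xi \in {\cal X}(S)$, by the pigeonhole principle there is an index $\ell \leq k$ such that infinitely many $X_j$ lie in $A_\ell$; after passing to this subsequence we may assume $X_j \subset A_\ell \cap {\cal T}_\epsilon(S)$ for all $j$. The crucial point I need to isolate is that the notion of convergence in Definition \ref{convergence2} is preserved under passing to subsequences: requirement (1) there concerns Hausdorff limits of arbitrary subsequences of short curves, a condition only strengthened by restricting to a subsequence; requirements (2) and (3) are statements about convergence of subsurface projections in $\overline{\cal C\cal G}(S_i)$ and about limits of ratios of distances under subsurface projection, both inherited by any subsequence. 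Thus the extracted subsequence still converges to $\xi$ in the sense of Definition \ref{convergence2}, and since $A_\ell \in \mathcal{F}$, we conclude $\xi \in A_\ell \subset A$.

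The argument is therefore a formal verification once subsequence-stability of the convergence notion is recorded, and there is no substantive obstacle; the only point requiring attention is to confirm that each clause of Definition \ref{convergence2} is manifestly subsequence-stable, which is immediate from its quantifier structure. This mirrors the treatment in Lemma \ref{topology} and in the proof that the collection of closed subsets of $({\cal X}(S),\mathcal{O})$ is a topology.
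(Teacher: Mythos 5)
Your proof is correct and follows exactly the route the paper intends: the paper simply states that the argument is analogous to Lemma \ref{topology}, and you have carried out that analogy in full, with the same decomposition into stability under arbitrary intersections and finite unions and the same subsequence-extraction step for unions.
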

\begin{proof}
The proof is analogous to the proof of Lemma \ref{topology}. 
By the Hausdorff property of ${\cal T}_\epsilon(S)$ and Lemma \ref{topology}, note first that 
any limit of a convergent sequence $X_j\subset {\cal T}_\epsilon(S)$ is unique.

To show that the notion of convergence defines a topology on $\overline{\cal T}(S)$ 
with the property that ${\cal T}_\epsilon(S)\subset \overline{\cal T}(S)$ is open and 
a set $A\subset \overline{\cal T}(S)$ is closed if $A\cap {\cal T}_\epsilon(S)$ and 
$A\cap {\cal X}(S)$ are closed and if $A$ contains the limit of any sequence $X_j\subset 
{\cal T}(S)$ which converges to a point in ${\cal X}(S)$, it suffices to verify that
the empty set and the entire space are closed, and the same holds true for finite unions
and arbitrary intersections of closed sets. 
The verification that this is satisfied is 
identical to the argument used in the proof of Lemma \ref{topology}.

Finally we have to show that the topology thus constructed is Hausdorff. 
Since ${\cal X}(S)\subset \overline{\cal T}(S)$ is a closed Hausdorff space 
and 
${\cal T}_\epsilon(S)\subset \overline{\cal T}(S)$ is an open Hausdorff subspace 
of $\overline{\cal T}(S)$,
all we need to show is that two points $\xi\not=\eta\in {\cal X}(S)$ have disjoint neighborhoods.
Now $\xi,\eta$ have disjoint neighborhoods in ${\cal X}(S)$ and hence 
since $\overline{\cal T}(S)$ is separable, 
it suffices to show
that the limit of any sequence $X_i\subset {\cal T}_\epsilon(S)$ converging to a point in 
${\cal X}(S)$ is unique. But this was established in the beginning of this proof.
\end{proof}

Assume from now on that
$\overline{\cal T}(S)$ is equipped with the topology defined in Proposition \ref{topology2}. 
We have to verify that this topology satisfies the properties stated in Theorem \ref{mainprecise}. 

\begin{example}\label{uniofann}
If $S=\sqcup_{i=1}^k S_i$ where for each $i$, $S_i$ is an annulus, then ${\cal T}(S)=\mathbb{R}^k$, viewed
as a ${\rm CAT}(0)$ cube complex with a proper isometric action of $\mathbb{Z}^k$. 
The compactification of $\mathbb{R}^k$ by attaching the visual boundary $S^{k-1}$ defines 
an ${\cal E\cal Z}$-structure for $\mathbb{Z}^k$ with the properties stated in Theorem \ref{mainprecise}. 

Now ${\cal X}(S)$ is the join of $k$ sets consisting of two points each and hence ${\cal X}(S)$ is a sphere
of dimension $k-1$. By Proposition \ref{topology2}, the topology on ${\cal T}_\epsilon(S)\cup {\cal X}(S)$ is 
determined by the properties stated in Theorem \ref{mainprecise} and hence
$\overline{\cal T}(S)$ equals the compactification of $\mathbb{R}^k$ by adding the visual sphere.
In particular, $\overline {\cal T}(S)$ has all properties stated in Theorem \ref{mainprecise}. 
\hfill $\blacksquare$
\end{example}

\begin{proposition}\label{topsmallbd}
The topological space $(\overline{\cal T}(S),{\cal O}_0)$ has
the following properties.  
\begin{enumerate}
\item $\overline{\cal T}(S)$ is 
compact and separable.
\item The mapping class group acts on $\overline{\cal T}(S)$ as
  a group of transformations.
\end{enumerate}
\end{proposition}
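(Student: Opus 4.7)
The plan is to parallel the proof of Proposition \ref{compactbd}. Separability is immediate since ${\cal T}_\epsilon(S)$ is second countable and ${\cal X}(S)$ is separable by Corollary \ref{countable}. For the Hausdorff property I treat three cases. The decisive observation is that on any compact subset of ${\cal T}_\epsilon(S)$ every subsurface projection is uniformly bounded, so no sequence with terms in such a set can satisfy condition (2) of Definition \ref{convergence2} and converge to a boundary point. Hence any relatively compact open $U\subset {\cal T}_\epsilon(S)$ is open in $\overline{\cal T}(S)$, and its closure is closed in $\overline{\cal T}(S)$. This separates two points of ${\cal T}_\epsilon(S)$ using ordinary open sets, and separates a point $X\in {\cal T}_\epsilon(S)$ from a point $\xi\in {\cal X}(S)$ via $U$ and $\overline{\cal T}(S)\setminus \overline{U}$. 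Two points of ${\cal X}(S)$ were separated in Proposition \ref{compactbd}. As in the proof of that proposition, for the resulting separable Hausdorff Lindel\"of space it suffices to establish sequential compactness.

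Given $(z_j)\subset \overline{\cal T}(S)$, after passing to a subsequence I may assume either all $z_j\in {\cal X}(S)$, in which case Proposition \ref{compactbd} provides a convergent subsequence, or all $z_j=X_j\in {\cal T}_\epsilon(S)$. If some subsequence of $X_j$ remains in a compact subset of ${\cal T}_\epsilon(S)$ then local compactness finishes the case; otherwise $X_j$ escapes every compact subset of ${\cal T}_\epsilon(S)$. For each $j$ fix a short marking $\mu(X_j)$, whose cardinality is uniformly bounded and whose curves have pairwise uniformly bounded geometric intersection numbers. By Hausdorff-compactness of the space of geodesic laminations on a fixed auxiliary hyperbolic surface and a diagonal extraction, each marking curve $v_j^\ell$ converges in the Hausdorff topology to a geodesic lamination $\lambda^\ell$; let $\zeta_1,\dots,\zeta_k$ be the minimal components of $\Lambda=\cup_\ell \lambda^\ell$, and let $S_i$ be the subsurface filled by $\zeta_i$. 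Pairwise disjointness of the $S_i$ follows because minimal laminations have geometric intersection either zero or infinity with any transverse lamination, while the short marking curves have uniformly bounded pairwise intersection, forcing distinct minimal Hausdorff limits to be disjoint. After recording $\pm$ labels for simple closed curve components along a further subsequence, each $\zeta_i$ defines a point of ${\cal J}(S_i)$. Since $\zeta_i$ fills $S_i$ and is Hausdorff-approximated by marking curves, $D_i(j):=d_{{\cal C\cal G}(S_i)}({\rm pr}_{S_i}(X_j),c_i)\to \infty$. After relabeling so that $D_1(j)$ grows maximally and extracting a subsequence with $D_i(j)/D_1(j)\to \alpha_i\in [0,1]$, the normalization $a_i=\alpha_i/\sum_j\alpha_j$ yields weights with $\sum_i a_i=1$ and candidate limit $\xi=\sum_i a_i\zeta_i\in {\cal X}(S)$.

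The main technical step, and the principal obstacle, is verifying condition (3) of Definition \ref{convergence2}: for every subsurface $U$ disjoint from $\cup_i S_i$ one needs $d_{{\cal C\cal G}(U)}({\rm pr}_U(X_j),x_U)/D_1(j)\to 0$. Since there are only countably many isotopy classes of subsurfaces of $S$, a further diagonal extraction arranges convergence of each such ratio to some $\beta_U\in [0,1]$; the crux is to rule out $\beta_U>0$. If it held, then ${\rm pr}_U(X_j)$ would accumulate on a minimal filling lamination of $U$ disjoint from every $\zeta_i$, and that lamination would have to appear as a minimal component of the Hausdorff limit of the short markings $\mu(X_j)$, contradicting the choice of $\zeta_1,\dots,\zeta_k$ as exhausting this list. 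Granting this step, Definition \ref{convergence2} yields $X_j\to \xi$, proving sequential compactness.

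Finally, every ingredient of Definition \ref{convergence2}---short markings, subsurface projections, the Hausdorff topology on laminations, and curve graphs of subsurfaces---is equivariant under ${\rm Mod}(S)$ up to the harmless replacement of each basepoint $c_i\in {\cal C\cal G}(S_i)$ by $\phi(c_i)$; hence $X_j\to \xi$ in $\overline{\cal T}(S)$ implies $\phi(X_j)\to \phi(\xi)$ for every $\phi\in {\rm Mod}(S)$, and combined with Proposition \ref{top2} this proves that ${\rm Mod}(S)$ acts on $\overline{\cal T}(S)$ as a group of transformations.
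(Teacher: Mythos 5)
Your approach closely mirrors the paper's: extract a Hausdorff-convergent subsequence of short marking curves, identify the minimal components of the limit, form weights from ratios of subsurface-projection distances, and check the three conditions of Definition \ref{convergence2}. The added care with relatively compact open sets in the Hausdorff argument is a small improvement over the paper, which only separates boundary points. However, the compactness argument has a real gap.

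The gap is the assertion ``Since $\zeta_i$ fills $S_i$ and is Hausdorff-approximated by marking curves, $D_i(j)\to\infty$.'' This is false when $\zeta_i$ is a simple closed curve: in that case $S_i$ is the annulus $A(\zeta_i)$, and $D_i(j)$ measures the twisting of $\mu(X_j)$ about $\zeta_i$, which need not diverge even when $\zeta_i$ is a Hausdorff limit of marking curves (for instance if $\zeta_i$ is itself one of the pants curves of $\mu(X_j)$ for all $j$). The paper handles exactly this point by passing to a further subsequence, calling such components \emph{unlabeled}, and discarding them; only the non-simple-closed-curve components and the \emph{labeled} simple closed curve components enter the candidate limit $\xi$. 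Related to this, your step ``After recording $\pm$ labels for simple closed curve components'' is not available for unlabeled components, since no label exists when the annular projection stays bounded. Once the unlabeled components are discarded, there is a second, sharper issue you do not address: one must show that the resulting list is \emph{nonempty}, i.e.\ that at least one $D_i(j)\to\infty$ for a component that survives. Your relabeling ``so that $D_1(j)$ grows maximally'' implicitly assumes $D_1(j)\to\infty$, but nothing in your argument rules out the scenario that all subsurface projection distances to annuli about the surviving components stay bounded. The paper closes this via Theorem 1.1 of \cite{R07}: because the Teichm\"uller distance from the basepoint to $X_j$ diverges, some subsurface projection diverges, and this forces the existence of a component of the required type. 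Without an ingredient of this kind (Rafi's theorem, or Lemma \ref{distance} suitably applied via the coarse equivalence of $\mathrm{Mod}(S)\backslash{\cal T}_\epsilon(S)$ with the marking graph), your construction of the candidate limit $\xi$ does not get off the ground.

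Once these two points are repaired, the rest of your verification of conditions (2) and (3) follows the paper, although the sentence ``disjoint from every $\zeta_i$'' should be tightened to ``disjoint from every $\zeta_i$ with $a_i>0$'' and one should note separately that for $U=S_\ell$ with $a_\ell=0$ the ratio $\beta_U$ vanishes by the very choice of the subsequence.
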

\begin{proof} $\overline{\cal T}(S)$ is clearly separable since this holds true
for ${\cal X}(S)$ and ${\cal T}_\epsilon(S)$. 
By Proposition \ref{topology2}, it is a Hausdorff space.

To show that $\overline{\cal T}(S)$ is compact, note that since ${\cal X}(S)$ is compact and 
${\cal T}_\epsilon(S)$ is a Lindel\"of space, the space $\overline{\cal T}(S)$ is 
Lindel\"of. Since $\overline{\cal T}(S)$ also is Hausdorff, 
it suffices to show that $\overline{\cal T}(S)$ is 
sequentially compact, and this follows if we can show that
any sequence $X_i\subset {\cal T}_\epsilon(S)$ 
has a convergent subsequence in $\overline{\cal T}(S)$. 

If the sequence has a bounded subsequence in ${\cal T}_\epsilon(S)$ with respect
to a fixed basepoint $X\in {\cal T}_\epsilon(S)$, then as 
${\cal T}_\epsilon(S)$ is proper, we can extract a converging subsequence. Thus 
it suffices to show the following

\smallskip
\noindent
{\bf Claim:} Any unbounded sequence $X^i\subset {\cal T}_\epsilon(S)$ admits a subsequence
which converges in $\overline{\cal T}(S)$ to a point $\xi\in {\cal X}(S)$.

\smallskip \noindent
{\em Proof of the claim:} The proof of the claim is essentially identical
with the proof of Proposition \ref{compactbd}.

Since the space of geodesic laminations on $S$ equipped with the Hausdorff topology is
compact, by extracting a subsequence we may assume that the sets of 
all simple closed curves contained in the marking $\mu(X^i)$ 
converge in the Hausdorff topology to a finite union $\cup_{j=1}^\ell \beta_j$ 
of (not necessarily minimal) geodesic laminations. Note that
as some of the curves in $\mu(X^i)$ may intersect, these laminations are not 
necessarily disjoint, that is, $\cup_{j=1}^\ell \beta_j$ may not be a lamination in its
own right.  
However, since the number of components of $\mu(X^i)$ 
is uniformly bounded, the same holds true for the number of limit laminations.

Let $\zeta_1,\dots,\zeta_s$ be the collection of all minimal components of the laminations $\beta_u$ 
which are distinct
from simple closed curves. The number of such components is uniformly bounded.
Each of the laminations $\zeta_j$ fills a subsurface 
$S_j$ of $S$ which is different from an annulus or a pair of pants. 
Thus $\zeta_j$ is a point in the Gromov boundary of the curve 
graph ${\cal C\cal G}(S_j)$ of $S_j$. 

Now for $j\leq s$, a sequence $c^i_j$ of simple closed curves on the surface $S_j$ converges to 
$\zeta_j$ in ${\cal C\cal  G}(S_j)\cup \partial {\cal C\cal G}(S_j)$ if and only if
their geodesic representatives for some fixed hyperbolic metric on $S_j$ 
converge to $\zeta_j$ in the coarse Hausdorff topology. 
As the diameter of the subsurface projection of 
$\mu(X^i)$ to $S_j$ is bounded independent of $i$, hyperbolicity 
of ${\cal C\cal G}(S_j)$ implies that 
the subsurface projection to $S_j$  
of any of the curves in $\mu(X^i)$ which
intersects $S_j$ converges in the coarse Hausdorff topology to $\zeta_j$. 
 As a consequence,
none of the limits in the Hausdorff topology of any sequence of components of 
$\mu(X^i)$ can intersect $\zeta_j$. 

By a similar argument, if $\zeta_j$ is a closed curve component, then we can consider
the subsurface projections of a component of  $\mu(X^i)$ to an annulus $A(\zeta_j)$ with core
curve $\zeta_j$. Up to passing to a further subsequence, we may assume that these projections
are either bounded along the sequence, or converge to one of the two 
boundary components of the curve graph of $A(\zeta_j)$. In the first case call $\zeta_j$ 
\emph{unlabeled}. In the second case, label $\zeta_j$ with the corresponding point in the 
Gromov boundary of the curve graph of $A(\zeta_j)$ and note by the reasoning used in 
the previous paragraph, no labeled 
simple closed curve component $\zeta_j$ can be intersected
by another component $\zeta_\ell$.

By reordering, let $\zeta_1,\dots,\zeta_k$ be the components of the limit laminations $\beta_u$ 
which either are distinct
from simple closed curves or which are labeled simple closed curves. 
We claim that $k\geq 1$, that is, that there is at least one lamination with this property.
Namely, if $c$ is an unlabeled simple closed curve, represented by a closed geodesic
for the base surface $X$, and if with respect to the Hausdorff topology on compact subsets
of $X$ a limit of the sequence $\mu(X^i)$ contains $c$ as an unlabeled component,
then no component 
of a limit of the sequence $\mu(X^i)$ in the Hausdorff topology 
can spiral about $c$ and hence $c$ is a component of $\mu(X^i)$ for all but 
finitely many $i$. If $k=0$ then 
this holds true for any limit point of the sequence $\mu(X^i)$ in the Hausdorff topology. 
But $\mu(X^i)$ is a marking
of $S$ for all $i$ and hence decomposes $S$ into disks and once punctured disks and
consequently the sequence $\mu(X^i)$ is bounded. But this contradicts the assumption that
the sequence $X^i\subset {\cal T}_\epsilon(S)$ is an unbounded sequence.

By what we showed so far,
$\hat \zeta=\cup_{j=1}^k\zeta_k$ is a geodesic lamination. Furthermore,
if $S_j$ is the subsurface of $S$ filled by $\zeta_j$, then 
$d_{{\cal C\cal G}(S_j)}({\rm pr}_{S_j}(\mu(X^i)),x_j)\to \infty$ where as before, 
$x_j\in {\cal C\cal G}(S_j)$ is a fixed basepoint for ${\cal C\cal G}(S_j)$.

If $\hat \zeta$ is minimal and fills $S$ then $X_i\to \hat \zeta\in \overline{\cal T}(S)$ and we are done.
Otherwise we argue as in the proof of Proposition \ref{compactbd}.
Namely, put $S_{k+1}=S\setminus \cup_iS_i$. Then the projections
${\rm pr}_{S_{k+1}}(\mu(X_i))$ define a bounded subset of the curve graph
${\cal C\cal G}(S_{k+1})$, and the same holds true for projections
${\rm pr}_V(\mu(X_i))$ for any proper subsurface $V$ of $S_{k+1}$.
But then the construction in the proof of Proposition \ref{compactbd} 
produces a subsequence of the sequence $\mu(X^i)$ 
with the property that 
$({\rm pr}_{S_1}(\mu(X^i)), \dots ,{\rm pr}_{S_{k+1}}(\mu(X^i)))$ 
converges in
${\cal Y}(\cup_{i=1}^{k+1}S_i)$ to $\xi=\sum_i a_i\zeta_i$ for $a_i\geq 0$, $\sum_ia_i=1$. 
By the requirement (2), we then have
$X_i\to \xi\in \overline{\cal T}(S)$ which completes the proof of the claim. 
\hfill $\blacksquare$

\smallskip

To summarize, we showed that 
$\overline{\cal T}(S)$ is sequentially compact Hausdorff Lindel\"of space and 
hence it is compact.

We are left with showing that ${\rm Mod}(S)$ acts on 
$\overline{\cal T}(S)$ as a group of transformations.
However, as ${\rm Mod}(S)$ acts on ${\cal T}_\epsilon(S)$ and on 
${\cal X}(S)$ as a group of transformations, and it maps subsurfaces of $S$ to subsurfaces of the 
same topological type, moreover 
the definition of convergence which determines
the topology ${\cal O}_0$ is natural with respect to the action of 
${\rm Mod}(S)$ on subsurfaces and subsurface projections, 
this is indeed the case.
The proposition is proven.
\end{proof}

\begin{theo}\label{smallbd}
${\cal X}(S)$ is a small boundary for ${\rm Mod}(S)$. A pseudo-Anosov mapping class acts
on ${\cal X}(S)$ with north-south dynamics. In particular, the action of ${\rm Mod}(S)$
on ${\cal X}(S)$  is strongly proximal. 
\end{theo}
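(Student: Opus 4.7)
The argument splits into three parts corresponding to the three claims.

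\textbf{Smallness.} Using Proposition \ref{topsmallbd}, I transfer the topology on $\overline{\mathcal{T}}(S)$ to ${\rm Mod}(S)\cup {\cal X}(S)$ via the orbit map based at some $X_0\in \mathcal{T}_\epsilon(S)$: declare $\phi_i\to \xi\in {\cal X}(S)$ exactly when $\phi_iX_0\to \xi$ in $\overline{\mathcal{T}}(S)$. Since ${\rm Mod}(S)$ acts properly and cocompactly on $\mathcal{T}_\epsilon(S)$, any sequence $\phi_i$ either has a bounded orbit (and is then eventually constant, hence converges in the discrete topology) or an unbounded orbit with a subsequence converging into ${\cal X}(S)$ by compactness of $\overline{\mathcal{T}}(S)$. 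The ${\rm Mod}(S)$-action extends continuously by Proposition \ref{topsmallbd}. For smallness, I must show $\phi_i\psi\to\xi$ whenever $\phi_i\to\xi$, which amounts to checking that convergence of $\phi_i Y$ to $\xi$ in $\overline{\mathcal{T}}(S)$ is independent of $Y\in \mathcal{T}_\epsilon(S)$. For any two basepoints $X_0,Y$, the short markings $\mu(\phi_iY)=\phi_i\mu(Y)$ and $\mu(\phi_iX_0)=\phi_i\mu(X_0)$ have uniformly bounded intersection (the intersection of $\mu(X_0)$ with $\mu(Y)$), so their subsurface projections into any ${\cal C\cal G}(S_0)$ differ by a constant independent of $i$. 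Since Definition \ref{convergence2} is framed in terms of ratios of unbounded distances, this bounded discrepancy does not affect the limit.

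\textbf{North--south dynamics.} Let $\phi$ be pseudo-Anosov with attracting/repelling laminations $\nu_\pm\in\partial{\cal C\cal G}(S)$, and let $\zeta=\sum_ia_i\zeta_i\in {\cal X}(S)\setminus\{\nu_-\}$. Because $\nu_\pm$ fill $S$, they cannot appear as a proper component of a multi-term sum; hence either $\zeta=\nu_+$ (trivially fixed) or every component $\zeta_i$ is supported in a proper subsurface. I then verify the three convergence requirements for $\phi^n\zeta\to \nu_+$. Requirement (1) asks that every Hausdorff subsequential limit of ${\rm supp}(\phi^n\zeta)$ contains $\nu_+$; this is the standard fact that iteration of a pseudo-Anosov drags any lamination other than $\nu_-$ onto $\nu_+$ in the Hausdorff topology, provable by contradiction using that the Hausdorff limit would be a $\phi$-invariant lamination not equal to $\nu_-$. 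Requirement (3) involves only the single subsurface $S$ (since $\nu_+$ fills $S$), and ${\rm pr}_{\mathcal{Z}(S)}(\phi^n\zeta)=\phi^n({\rm pr}_{\mathcal{Z}(S)}(\zeta))$ converges to $\nu_+$ by the loxodromic action of $\phi$ on the hyperbolic curve graph with attracting point $\nu_+$. Requirement (2) is treated similarly in the minimal case as in Example \ref{limit}. To promote pointwise convergence to genuine north--south dynamics (uniform convergence on compacta avoiding $\nu_-$), I use that ${\cal X}(S)$ is compact Hausdorff: a failure of $\phi^n(K)\subset U$ for some compact $K\subset {\cal X}(S)\setminus\{\nu_-\}$ and neighborhood $U$ of $\nu_+$ would produce $\zeta_k\in K$ with $\phi^{n_k}\zeta_k\notin U$; extracting a subsequential limit $\zeta_\infty\in K$ (so $\zeta_\infty\neq \nu_-$) and applying pointwise convergence yields a contradiction.

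\textbf{Strong proximality.} Given any Borel probability measure $\mu$ on ${\cal X}(S)$, the set of atoms of $\mu$ is countable. Since ${\rm Mod}(S)$ contains uncountably many pseudo-Anosov elements with distinct repelling laminations in $\partial{\cal C\cal G}(S)$, I pick a pseudo-Anosov $\phi$ whose repelling lamination $\nu_-$ is not an atom of $\mu$. By the north--south dynamics just established, $\phi^n\zeta\to \nu_+$ pointwise on ${\cal X}(S)\setminus\{\nu_-\}$, and the uniform convergence on compacta combined with $\mu(\{\nu_-\})=0$ gives $(\phi^n)_*\mu\to \delta_{\nu_+}$ in the weak-$*$ topology. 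Hence $\delta_{\nu_+}$ lies in the orbit closure of $\mu$.

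\textbf{Main obstacle.} The delicate step is the Hausdorff-limit portion of north--south dynamics for a non-minimal $\zeta$ with components in proper subsurfaces: one must ensure that the expanding nature of $\phi$ on $S$ forces $\phi^n({\rm supp}(\zeta))$ to Hausdorff-accumulate on $\nu_+$ and not on some sublamination disjoint from it. The fact that $\phi$ is pseudo-Anosov \emph{on all of $S$}, leaving no proper essential subsurface invariant, is precisely what drives this conclusion; once requirement (1) is in hand, the rest follows from the already-established loxodromic dynamics on ${\cal C\cal G}(S)$.
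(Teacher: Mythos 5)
Your proof follows the same skeleton as the paper's (which is extremely terse and essentially asserts north--south dynamics and strong proximality without verification), and fills in substantially more detail. The smallness argument via change of basepoint and uniformly bounded intersection of markings matches the paper's one-line justification exactly, and the strong-proximality argument from north--south dynamics and countability of atoms is the standard one the paper clearly has in mind.

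There is, however, a genuine gap in the promotion from pointwise to uniform convergence in your north--south dynamics argument. You write that extracting $\zeta_{k}\to\zeta_\infty\ne\nu_-$ and "applying pointwise convergence yields a contradiction," but pointwise convergence only controls $\phi^n\zeta_\infty$, not $\phi^{n_k}\zeta_k$. Compactness alone does not upgrade pointwise convergence to uniform convergence on compacta; you need the stronger \emph{two-variable} statement that $\zeta_k\to\zeta_\infty\ne\nu_-$ and $n_k\to\infty$ together force $\phi^{n_k}\zeta_k\to\nu_+$. To get this, one must rerun the three-requirement verification with the diagonal sequence $\phi^{n_k}\zeta_k$ directly: any Hausdorff subsequential limit of ${\rm supp}(\phi^{n_k}\zeta_k)$ still contains $\nu_+$ (because ${\rm supp}(\zeta_k)$ accumulates on laminations transverse to $\nu_-$, and the projective classes $\phi^{n_k}_*[\mu_k]\to[\nu_+]$ in $\mathcal{PML}$ by the classical two-parameter north--south property there), and the subsurface projection requirements then follow from the loxodromic action on $\mathcal{CG}(S)$ as in your single-variable argument. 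Without this diagonal version, the compactness contradiction does not close.

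Two smaller remarks. Your trichotomy "either $\zeta=\nu_+$ or every component $\zeta_i$ is supported in a proper subsurface" omits the case of a single minimal filling lamination $\zeta\in\partial\mathcal{CG}(S)\setminus\{\nu_\pm\}$ (which is supported in $S$ itself, not a proper subsurface); this case is in fact the easiest, handled directly by Klarreich's description of $\partial\mathcal{CG}(S)$, so the omission is cosmetic but should be corrected. And your justification of Requirement (1) invokes a "$\phi$-invariant limit lamination," but a Hausdorff subsequential limit of $\phi^{n_k}\lambda$ need not be $\phi$-invariant; the cleaner route is to equip ${\rm supp}(\zeta)$ with any transverse measure, use $i(\cdot,\nu_-)>0$, apply north--south dynamics in $\mathcal{PML}$, and pass from $\mathcal{PML}$-convergence to containment of $\nu_+$ in the Hausdorff limit.
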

\begin{proof} We showed so far that ${\cal X}(S)$ defines a boundary of ${\cal T}_\epsilon(S)$ and hence
of ${\rm Mod}(S)$ since ${\rm Mod}(S)$ acts properly and cocompactly on ${\cal T}_\epsilon(S)$.  
Furthermore, by Example \ref{limit}, a pseudo-Anosov element acts on ${\cal X}(S)$ 
with north-south dynamics and hence the action of ${\rm Mod}(S)$ on 
${\cal X}(S)$ is strongly proximal. 

We are left with showing that the right action of ${\rm Mod}(S)$ induces the identity.
However, this action just consists of a change of basepoint. As a sequence of points 
of uniformly bounded distance from a convergent sequence converges to the same point,
this yields the statement of the theorem.
\end{proof}

%
%

\section{Neighborhood bases}\label{sec:neighborhoodbasis}

The main goal of this section is to construct for a point in 
${\cal X}(S)\subset \overline{\cal T}(S)$
an explicit neighborhood basis
in $\overline{\cal T}(S)$ 
consisting of small closures of open contractible subsets of $\mathring{\cal T}_\epsilon(S)$.
Here by a contractible subset of ${\cal T}_\epsilon(S)$ we mean a subset $V$ which 
is a contractible space with respect to
the subspace topology.  The construction is the most involved part of the proof
of Theorem \ref{main}. It is 
 carried out in three steps, each of which is contained
 in a separate subsection.   
 

\subsection{A neighborhood basis for minimal filling laminations}\label{subsec:filling}
In this subsection we prove the following result for a connected finite type surface $S$.
 
\begin{proposition}\label{curvegraphbdneigh} 
Every point $\xi\in \partial{\cal C\cal G}(S)\subset {\cal X}(S)$ has a countable 
neighborhood basis in 
$\overline{\cal T}(S)$ consisting of sets whose intersections with 
${\cal T}_\epsilon(S)$ are small closures of contractible open subsets of 
$\mathring {\cal T}_\epsilon(S)$ and hence are contractible.
\end{proposition}

The proof of Proposition \ref{curvegraphbdneigh}
introduces the ideas used in the general case, but it is technically
easier. We begin with summarizing some tools used to relate Teichm\"uller theory
to combinatorial geometry. 

By \cite{MM99}, for any surface $V$ of finite type there is 
a number $p>0$ only depending on the complexity of $V$ such that 
the image under the map $\Upsilon$ of a Teichm\"uller geodesic $\gamma:\mathbb{R}\to 
{\cal T}(S)$ is an \emph{unparameterized $p$-quasi-geodesic} in 
${\cal C\cal G}(V)$. This means the following. There is an increasing
homeomorphism $\sigma:(a,b)\subset \mathbb{R}\to \mathbb{R}$ such that
the map $\Upsilon \circ \gamma\circ \sigma:(a,b)\to {\cal C\cal G}(S)$ is 
a $p$-quasi-geodesic. This quasi-geodesic may be bounded, one-sided infinite or
two-sided infinite. A sufficient but not necessary condition for being 
one-sided infinite in the positive direction is that the geodesic recurs in the positive direction
to the thick part ${\cal T}_\epsilon(S)$ for arbitrarily large times. 
Since the directions of Teichm\"uller geodesic rays with this property are dense
in the cotangent bundle of Teichm\"uller space and the map $\Upsilon$ is coarsely
Lipschitz with respect to the Teichm\"uller metric on ${\cal T}(S)$ and the metric 
on ${\cal C\cal G}(S)$, 
up to increasing $p$, 
any geodesic segment $\alpha:[0,n]\to {\cal C\cal G}(S)$ can be extended
to a $p$-quasi-geodesic ray $\alpha:[0,\infty)\to {\cal C\cal G}(S)$. Note that this
is \emph{not} true for geodesics in ${\cal C\cal G}(S)$ \cite{BM15}.

Let $\xi\in \partial {\cal C\cal G}(S)$. Choose a basepoint 
$X_0\in {\cal T}_\epsilon(S)$ and let $c$ be a pants curve from $\mu(X_0)$.
For $j,\ell\geq 0$ define
\[W(\xi,j,\ell)\subset {\cal T}_\epsilon(S)\]
to be the set of all hyperbolic metrics $X\in {\cal T}_\epsilon(S)$ with the
following properties.
\begin{enumerate}
\item 
$d_{{\cal C\cal G}(S)}(\mu(X), c)\geq j$. 
\item 
A geodesic in ${\cal C\cal G}(S)$ connecting $c$ 
to $\mu(X)$ 
can be extended to a $p$-quasi-geodesic in ${\cal C\cal G}(S)$ whose
endpoint is contained in the ball of radius $e^{-\ell}$ about $\xi$ 
in $\partial {\cal C\cal G}(S)$, where the metric on 
$\partial {\cal C\cal G}(S)$ is the Gromov distance $d_{c}$ constructed from  
the basepoint $c$.  
\end{enumerate}

\begin{lemma}\label{closure}
For each $\xi\in \partial {\cal C\cal G}(S)$ 
the closures of the sets $W(\xi,j,\ell)$ in $\overline{\cal T}(S)$ define a neighborhood 
basis of $\xi$ in $\overline{\cal T}(S)$.
\end{lemma}
\begin{proof}
We show first that for each $\xi,j,\ell$ the closure of $W(\xi,j,\ell)$ in $\overline{\cal T}(S)$ is a neighborhood
of $\xi$. 
Since ${\cal T}_\epsilon(S)$ is dense in
  $\overline{\cal T}(S)$ and by Proposition \ref{topsmallbd}, 
  $\overline {\cal T}(S)$ is a compact separable Hausdorff space, 
it suffices to show the following.
Let $(X_m)\subset {\cal T}_\epsilon(S)$ be a sequence converging in
$\overline{\cal T}(S)$ to $\xi$; then $X_m\in W(\xi,j,\ell)$ for all
sufficiently large $m$.

Now as $\xi\in \partial {\cal C\cal G}(S)$, by the definition of the topology of
$\overline{\cal T}(S)$, we know that $\mu(X_m)\to \xi$ in 
${\cal C\cal G}(S)\cup \partial {\cal C\cal G}(S)$. Together with hyperbolicity of 
${\cal C\cal G}(S)$ and the definition of the topology on the union of a 
hyperbolic geodesic metric space with its Gromov boundary, 
this immediately implies that
$X_m\in W(\xi,j,\ell)$ for all sufficiently large $m$.

A similar argument also shows that the sets $W(\xi,j,\ell)$ define a neighborhood basis of 
$\xi$. Namely, note that the sets $W(\xi,j,\ell)$ are 
  \emph{nested}:
  If $j^\prime>j$, then $W(\xi,j^\prime,\ell)\subset W(\xi,j,\ell)$, and if 
$\ell^\prime >\ell$ then $W(\xi,j,\ell^\prime)\subset W(\xi,j,\ell)$.   
Thus since ${\cal X}(S)$ is a compact Hausdorff space,
to show that the closures $\overline{W(\xi,j,\ell)}$ in $\overline{\cal T}(S)$ 
of the 
sets $W(\xi,j,\ell)$ define a neighborhood basis of
$\xi$ in $\overline{\cal T}(S)$, it suffices to show that
$\cap_{j>0,\ell>0}\overline{W(\xi,j,\ell)}=\{\xi\}$.

To see that this is indeed the case note first that
$\xi\in \overline{W(\xi,j,\ell)}$ for all $j,\ell$ 
and hence as these sets are compact, the point 
$\xi$ also is contained in
the intersection of these sets. Furthermore, the following holds true.

For each $j$ let $X_j\in W(\xi,j,j)$; 
then the distance of $\mu(X_j)$ to the base curve $c$ 
in ${\cal C\cal G}(S)$ tends to infinity with $j$.
This implies that the sequence $X_j$ can not have a convergent subsequence in
${\cal T}_\epsilon(S)$. Thus by compactness of $\overline{\cal T}(S)$, we observe that
$\cap_{j} W(\xi,j,j)\subset {\cal X}(S)$. But for each $j$, the set $\mu(X_j)$ is contained 
in a $p$-quasi-geodesic connecting the basepoint to a point in the $e^{-j}$-ball about $\xi$
in $(\partial {\cal C\cal G}(S),d_c)$ and therefore $X_j\to \xi$ in 
$\overline{\cal T}(S)$. Since $X_j\in W(\xi,j,j)$ was arbitrary, this implies that
$\cap_j\overline{W(\xi,j,j)}=\{\xi\}$ as claimed. 
\end{proof}

The following consequence of Lemma \ref{closure} will be used in 
Section \ref{Sec:metrizability}. 

\begin{corollary}\label{cor}
There are countably many open sets $U_i\subset \overline{\cal T}(S)$ which contain
a neighborhood basis for every $\xi\in \partial {\cal C\cal G}(S)$.
\end{corollary}
\begin{proof}  Since $\partial {\cal C\cal G}(S)$ is separable, we can choose a countable
dense subset $\{\xi_i\mid i\}\subset \partial{\cal C\cal G}(S)$. 
Let $d_c$ be the Gromov metric on $\partial {\cal C\cal G}(S)$ 
with respect to the basepoint $c$. Then the open $d_c$-balls $B_{i,j}$ of radius $e^{-j}$ about the points 
$\xi_i$ ($i,j>0$) 
define a basis of the topology of $\partial {\cal C\cal G}(S)$. 

By Lemma \ref{closure} and the definitions, the interiors of the countably many
closed subsets $\overline{W(\xi_i,j,\ell)}$ 
of $\overline{\cal T}(S)$ have the properties stated in 
the corollary. 
\end{proof}

Any minimal filling geodesic lamination $\xi$ decomposes $S$ into a union of ideal
polygons. 
Each of these polygons which is not an ideal triangle 
can be subdivided by adding isolated leaves which
connect two non-adjacent cusps of the polygon.  
The various ways to subdivide these polygons 
determine a finite collection 
$\xi^0,\dots,\xi^k$ of distinct geodesic laminations
which contain $\xi$ as a sublamination. 
Assume that $\xi^0=\xi$, that is, $\xi^0$ is the unique lamination among
the laminations $\xi^j$ which does not contain any isolated leaf. 

Let $d_{H}$ be the Hausdorff metric
 on the space of compact subsets of a fixed hyperbolic surface $X\in {\cal T}_\epsilon(S)$.
Denote as before by ${\rm Min}_\cup(S)$ the space of geodesic laminations on $X$ which are unions of 
disjoint minimal components. Equivalently, the only isolated leaves of a geodesic lamination
in ${\rm Min}_\cup(S)$ are simple closed curves. 
 As before, let 
${\rm supp}:{\cal X}(S)\to {\rm Min}_\cup(S)$
be the map which associates to a point $\zeta=\sum_ia_i\zeta_i$ $(a_i>0)$ the support 
${\rm supp}(\zeta)=\cup_i \zeta_i$. 
We have 

\begin{lemma}\label{minimal}
For $i>0$ let 
\[U_i=\cup_j \{\beta\in {\rm Min}_\cup(S)\mid d_H(\beta, \xi^j)\leq 1/i\}\]
and write
$V_i=\{\zeta\in {\cal X}(S)\mid {\rm supp}( \zeta) \in U_i\}$. 
Then the sets $V_i$ form a neighborhood basis of $\xi\in \partial {\cal C\cal G}(S)\subset {\cal X}(S)$ in ${\cal X}(S)$.
\end{lemma}
\begin{proof} Clearly $\xi=\xi^0\in V_i$ for all $i$. 
%
We first show that for each $i$ the set $V_i$ is a neighborhood of $\xi$. 
For this it suffices to show that for every 
sequence $\zeta_\ell\subset {\cal X}(S)$ converging to $\xi$ and any $i$, we have
$\zeta_\ell\in V_i$ for all but finitely many $\ell$. 

By the first requirement for convergence in the definition of the topology on
${\cal X}(S)$, we know that ${\rm supp}(\zeta_\ell)$ 
converges in the \emph{coarse} Hausdorff topology to $\xi_0={\rm supp}(\xi)$.
By compactness of the space of geodesic laminations of $S$ with respect to the 
Hausdorff topology,  by passing to a subsequence 
we may assume that the sequence ${\rm supp}(\zeta_\ell)$ converges in the Hausdorff
topology to a geodesic lamination $\zeta$. Then $\zeta$ contains
$\xi^0$ as a sublamination and hence $\zeta=\xi^s$ for some $s\leq k$.
By definition, this implies that ${\rm supp}(\zeta_\ell)\in U_i$ and hence 
$\zeta_\ell\in V_i$ for all sufficiently
large $\ell$ as predicted. Thus indeed, each of the sets
$V_i$ is a neighborhood of $\xi$.

To show that the sets $V_i$ form a neighborhood basis for $\xi$, note that
$V_{i+1}\subset V_i$ and hence it suffices to show that
$\cap_iV_i=\{\xi\}$. However, this is immediate from the definitions and the 
fact that the preimage of ${\rm supp}(\xi)$ under the support map 
${\rm supp}$ consists of the single point $\xi$.
\end{proof}



A \emph{measured geodesic lamination} on the surface $S$ 
is a geodesic lamination together with a transverse invariant measure.
The space ${\cal M\cal L}$ of measured geodesic laminations is equipped with the 
weak$^*$ topology. The quotient of ${\cal M\cal L}$ under the natural action of 
$(0,\infty)$ by scaling is the space ${\cal P\cal M\cal L}$ of 
\emph{projective measured geodesic laminations}. This space is homeomorphic
to the sphere $S^{6g-7+2m}$.
To put Lemma \ref{minimal} into proper context and for later use, we relate
the subset $\partial {\cal C\cal G}(S)\subset {\cal X}(S)\subset \overline{\cal T}(S) $ 
to the space ${\cal P\cal M\cal L}$.

To this end we use a more geometric view on ${\cal P\cal M\cal L}$.
Fix again a point $X\in {\cal T}_\epsilon(S)$. The 
cotangent space $T_X^*{\cal T}(S)$ of Teichm\"uller space
at $X$ can be identified with the space of measured geodesic 
laminations on $S$. Or, equivalently, by the Hubbard Masur theorem (the main theorem of \cite{HM79}, expressed
in the language of measured foliations),
every measured geodesic lamination $\nu$ on $S$ is the \emph{vertical measured geodesic
lamination} of a unique marked quadratic differential $q(\nu)$ for the complex structure on $S$ defined 
by $X$. With this identification, we can associate to $\nu\in {\cal M\cal L}$ the 
point $\gamma_\nu(1)$ where 
$\gamma_\nu:[0,\infty)\to {\cal T}(S)$ is the Teichm\"uller geodesic  starting at $X$ whose initial 
(co)-velocity equals $q(\nu)$. 
This construction defines the \emph{Teichm\"uller exponential map} 
$\exp_X:{\cal M\cal L}\cup \{0\}\to {\cal T}(S)$ at $X$ which is a homeomorphism.

Every nontrivial quadratic differential on $X$ defines a singular euclidean metric on $X$.
Associating to $\nu$ the area ${\rm area}(q(\nu))$ of the metric defined by $q(\nu)$ defines a 
positive $\mathbb{R}_+$-homogeneous function  
on ${\cal M\cal L}$ depending on $X$. Associating to 
$[\nu]\in {\cal P\cal M\cal L}$ the unique measured lamination
$\rho([\nu])$ with ${\rm area} (q(\rho[\nu]))=1$  then 
defines a section $\rho:{\cal P\cal M\cal L}\to {\cal M\cal L}$. In this way 
we can identify ${\cal P\cal M\cal L}$ with the sphere of unit directions for the 
Teichm\"uller metric at $X$.

The \emph{support} ${\rm supp}(\nu)$ 
of a measured geodesic lamination $\nu$ is a point in the space 
${\rm Min}_\cup(S)$.
Each of its components 
is equipped with a transverse invariant measure and hence
it is a measured geodesic
lamination in its own right.

Let as before $p>1$ be a control constant with the following properties. 
\begin{itemize}
    \item The image under the map $\Upsilon$ of any Teichm\"uller geodesic 
    is an unparameterized $p$-quasi-geodesic in ${\cal C\cal G}(S)$ (see \cite{MM99}). 
    \item Every geodesic segment in ${\cal C\cal G}(S)$ can be extended to a 
    $p$-quasi-geodesic ray.
\end{itemize}

Consider a minimal filling geodesic lamination $\xi\in \partial {\cal C\cal G}(S)$. 
Let $P(\xi)\subset {\cal P\cal M\cal L}$ be the set of 
all projective measured geodesic laminations which are
supported in $\xi$. This is a non-empty 
compact polytope of dimension $\leq 3g-3+m$ whose extreme
points are the ergodic projective transverse measures supported in $\xi$. 
In particular, $P(\xi)$ is compact and contractible. 
Since $P(\xi)$ is contractible
and since ${\cal P\cal M\cal L}$ is homeomorphic to a sphere of dimension $6g-7+2m$,
we can find a descending 
chain $V_1\supset V_2\supset \cdots$ of closed contractible neighborhoods
of $P(\xi)$, each of which is homeomorphic to a closed ball,  
such that $V_{j+1}\subset \mathring V_j$ and 
$\cap_jV_j=P(\xi)$. 

In the sequel we use the terminology 
\emph{small closure $\bar A_{\rm small}$ in $\overline{\cal T}(S)$} 
of a set $A\subset {\cal T}_\epsilon(S)$ to denote the union of the small closure
of $A$ in ${\cal T}_\epsilon(S)$ with the intersection with ${\cal X}(S)$ of the closure 
$\bar A$ of $A$ in $\overline {\cal T}(S)$. Thus for any set $A\subset {\cal T}_\epsilon(S)$, 
$\bar A_{\rm small}\cap {\cal X}(S)$ is closed, but $\bar A_{\rm small}\cap {\cal T}_\epsilon(S)$
may be open.

\begin{lemma}\label{descend}
Let $V_1\supset V_2\supset \cdots $ be
 a descending chain 
of closed contractible neighborhoods 
of $P(\xi)$ in ${\cal P\cal M\cal L}$, each of which is homeomorphic to a closed ball,
with $\cap_iV_i=P(\xi)$.
Let $\Lambda_\epsilon:{\cal T}(S)\to \mathring {\cal T}_\epsilon(S)$
be the homeomorphism from Proposition \ref{homeomorphism} and let 
$\exp_X:{\cal M\cal L}(S)\to {\cal T}(S)$ be the Teichm\"uller exponential map at
$X$. Then for each $j>0$, the small closure $\overline{Z(i,j)}_{\rm small}$ 
in $\overline{\cal T}(S)$ of the open set 
\[Z(i,j)=\Lambda_\epsilon\{\exp_X(t \rho[\nu])\mid t> j, 
[\nu]\in \mathring V_i\}\] is a neighborhood of $\xi$, and neighborhoods
of this form define a neighborhood basis of $\xi$.
\end{lemma}
\begin{proof}
We divide the proof of the lemma into two claims.

\smallskip
\noindent
{\bf Claim 1:} For all $i,j$, the small closure $\overline{Z(i,j)}_{\rm small}$ of $Z(i,j)$ in 
$\overline{\cal T}(S)$ is a neighborhood of $\xi$.

\smallskip \noindent
\emph{Proof of Claim 1:} 
Let ${\cal F\cal M\cal L}\subset {\cal P\cal M\cal L}$ be the subset of all projective
measured geodesic laminations whose support is a minimal geodesic lamination which 
fills up $S$. By Lemma 3.2 of \cite{H09}, the support map 
$F:{\cal F\cal M\cal L}\to \partial {\cal C}(S)$ which associates to 
a point in ${\cal F\cal M\cal L}$ its support is continuous and closed.
Thus the image $F({\cal F\cal M\cal L}\setminus \mathring V_i)$ is a closed subset of 
$\partial {\cal C\cal G}(S)$ which does not contain $\xi$. As a consequence, if we consider
the Gromov metric on $\partial {\cal C\cal G}(S)$ based at 
$\Upsilon(X)$, then 
there exists a number $T(i)>0$ so that the ball of radius 
$e^{-T(i)}$ about $\xi$ with respect to this metric 
is disjoint from 
$F({\cal F\cal M\cal L}\setminus \mathring V_i)$.

By the choice of the control constant $p>1$ and hyperbolicity, 
there exists a number $\tau(i)>T(i)$ with the following property.
Let $[\nu]\in {\cal F\cal M\cal L}\setminus \mathring V_i$;
then the endpoint of 
a $p$-quasi-geodesic ray in ${\cal C\cal G}(S)$ which starts at the basepoint 
$\Upsilon(X)$ and which passes through
a point on the $p$-quasi-geodesic 
$\Upsilon \{\exp_X(t \rho [\nu])\mid t\geq 0\}$ of distance at least $\tau(i)$ to $\Upsilon(X)$ 
is not contained 
in the ball of radius $e^{-T(i)/2}$ about $\xi$.

Since the homeomorphism  $\Lambda_\epsilon:{\cal T}(S)\to \mathring{\cal T}_\epsilon(S)$ is coarsely
$\Upsilon$-invariant, the map 
$\exp_X:{\cal M\cal L}\cup \{0\}\to {\cal T}(S)$ 
is a homeomorphism and ${\cal F\cal M\cal L}\subset {\cal P\cal M\cal L}$ is dense, 
it follows that for all $i,j$ 
there exists $\ell$ so that the set
$W(\xi,\ell,\ell)\subset {\cal T}_\epsilon(S)$ 
constructed in Lemma \ref{closure} 
is contained in the closure of the set 
$\Lambda_\epsilon(\{\exp_X(t\rho[\nu])\mid t\geq  j,[\nu]\in \mathring V_i\})$.
 Thus by Lemma \ref{closure}, 
$\overline{Z(i,j)}_{\rm small}$ is a neighborhood of $\xi$ in $\overline{\cal T}(S)$. 
\hfill $\blacksquare$
\smallskip 

The proof of the lemma is completed once we established the following. In its 
formulation, $\overline{Z(i,j)}$ is the closure of $Z(i,j)$ in $\overline{\cal T}(S)$. We do not have
to take the small closure here. 

\smallskip
\noindent
{\bf Claim 2:} Let $W$ be a neighborhood of $\xi$ in $\overline{\cal T}(S)$; then 
there exists some $i,j$ so that $\overline{Z(i,j)}\subset W$.
\smallskip

\noindent \emph{Proof of Claim 2:}
By Claim 1, each of the sets $\overline{Z(i,j)}$ is a neighborhood of 
$\xi$ and hence contains $\xi$. 
Furthermore, these neighborhoods are nested: If $i_1\leq i_2$ and 
$j_1\leq j_2$ then $\overline{Z(i_1,j_1)}\supset \overline{Z(i_2,j_2)}$. 
Thus since the sets $\overline{Z(i,j)}$ are moreover closed and hence compact, 
it suffices to show that $\cap_{i,j} \overline{Z(i,j)}=\{\xi\}$.

Since the Teichm\"uller exponential map $\exp_X$ at $X$ is a homeomorphism,
we clearly
have $\cap_{i,j}\overline{Z(i,j)}\subset {\cal X}(S)$.  
On the other hand,  the
map $\Upsilon:{\cal T}(S)\to {\cal C\cal G}(S)$ is coarsely Lipschitz, 
and for $\nu\in P(\xi)$, the $p$-quasigeodesic $t\to \Upsilon(\exp(t \rho[\nu]))$ has infinite
diameter. This implies that for any $k>0$ there are numbers 
$i(k)>0, m(k)>0$ so that for all $[\eta]\in V_{i(k)}$, 
the diameter of the image under $\Upsilon$ of the Teichm\"uller geodesic segment 
$\exp_X([0,m(k)]\rho[\eta])$ is at least $k$.
As a consequence, if $X_i\in \overline{Z(i,i)}$ 
for each $i$, then by compactness of $\overline{{\cal T}(S)}$, up to passing to 
 a subsequence the sequence $X_i$ converges to a point $\zeta\in {\cal X}(S)\cap \partial 
  {\cal C\cal G}(S)$. That this point has to coincide with $\xi$
 is an immediate consequence of the discussion in the proof of Claim 1 above. 
This completes the proof of the claim.
\hfill $\blacksquare$
\end{proof}

\begin{lemma}\label{retraction}
The sets $\overline{Z(i,j)}_{\rm small}\cap {\cal T}_\epsilon(S)$ 
are contractible.
\end{lemma}
\begin{proof}
Since for each $i$ the set $\mathring V_i$ is a contractible subset of 
the space of projective measured geodesic laminations, 
identified with the unit sphere in the cotangent space of 
${\cal T}(S)$ at $X$, the set 
\[H(i,j)=\cup_{[\nu]\in \mathring V_i}\{\exp_X(t \rho[\nu])\mid t > j\}\subset 
{\cal T}(S)\]
is open and contractible since it is homeomorphic to 
$\mathring V_i\times (j,\infty)$. This uses the fact that the Teichm\"uller exponential map
at $X$ is a homeomorphism of $T_X^*{\cal T}(S)$ onto ${\cal T}(S)$. 

But $Z(i,j)$ is the image of 
$H(i,j)$ under the homeomorphism $\Lambda_\epsilon:{\cal T}(S)\to \mathring {\cal T}_\epsilon(S)$ and hence $Z(i,j)$ is contractible.
Then by Lemma \ref{smallcontr}, the small closure of $Z(i,j)$
in ${\cal T}_\epsilon (S)$ is contractible as well.  
\end{proof}

\begin{proof}[Proof of Proposition \ref{curvegraphbdneigh}]
By Lemma \ref{descend}, the small closure 
$\overline{Z(i,j)}_{\rm small}$ of $Z(i,j)$ in $\overline {\cal T}(S)$ 
is a neighborhood of $\xi$. Lemma \ref{retraction} shows that its 
intersection with ${\cal T}_\epsilon(S)$ is contractible.
Using again Lemma \ref{descend}, the countably many
such sets define a countable neighborhood basis of $\xi$ in $\overline{\cal T}(S)$
whence the proposition.
\end{proof}

\subsection{Neighborhoods of minimal filling laminations for disconnected surfaces}\label{bdproduct} 

In this section we consider a disjoint union $S=\sqcup_{i=1}^k S_i$ of finitely many 
connected surfaces of finite type. 
Our goal is to construct for any point in 
\[{\cal E}=\partial {\cal C\cal G}(S_1)*\cdots *\partial {\cal C\cal G}(S_k)\subset {\cal X}(S)\] 
a neighborhood basis in 
$\overline{\cal T}(S)$ consisting of sets whose intersections with 
${\cal T}_\epsilon(S)$ are small closures of open contractible subsets of 
$\mathring {\cal T}_\epsilon(S)$.

\begin{remark}\label{directproducts}
In \cite{Ti11}, it was shown that if two groups $\Gamma_1,\Gamma_2$ admit 
${\cal E\cal Z}$-structures $(X_1,Z_1)$ and $(X_2,Z_2)$, then the direct product 
$\Gamma_1\times \Gamma_2$ admits an ${\cal E\cal Z}$-structure consisting of 
a compactification of the product $(X_1\setminus Z_1)\times (X_2\setminus Z_2)$ 
by adding the join $Z_1*Z_2$. Unfortunately,  we can not use this 
result directly in an inductive step 
as we need more precise information for the proof of 
Theorem \ref{mainprecise}. The argument in \cite{Ti11} is based on 
the definition of an ${\cal E\cal Z}$-structure using deformation retractions of neighborhoods of 
boundary points \cite{B96} and a subtle 
combination of these deformation retractions which leads to deformation retractions of 
neighborhoods of points in $Z_1*Z_2$ in the space 
$X_1\times X_2\cup  Z_1*Z_2$. This construction is not very well suited for essentially working
with non-proper spaces. 
\hfill $\blacksquare$
\end{remark}

The set ${\cal E}$ is the set of formal sums
$\sum_i a_i\xi_i$ where $\xi_i\in \partial {\cal C\cal G}(S_i)$ and $a_i \geq 0, \sum_ia_i=1$.
Choose a basepoint $X=(X_1,\dots,X_k)\in {\cal T}_\epsilon(S)=\prod {\cal T}_\epsilon(S_i)$ and let 
$x_i\in {\cal C\cal G}(S_i)$ be a component of a short marking $\mu(X_i)$ for $X_i$. 
Following Section \ref{subsec:filling}, we begin with constructing a neighborhood basis in $\overline{\cal T}(S)$ 
for each $\xi\in {\cal E}$, and in a second step we improve this basis to a basis
consisting of small closures of contractible open subsets of $\mathring{\cal T}_\epsilon(S)$.

Thus let $\xi=\sum_ia_i\xi_i$ and assume by reordering that there exists some $\ell \leq k$ such that
$a_i>0$ if and only if $1\leq i\leq \ell$. For $j,\ell, m>0$ define  
\[W(\xi,j,\ell,m)\subset {\cal T}_\epsilon(S)\] to be the set of all hyperbolic metrics $Y=(Y_1,\dots,Y_k)$ 
on $S$ with the following
properties. In their formulation, we view a short marking $\mu(Y)$ of a hyperbolic metric 
$Y\in {\cal T}_\epsilon(S_i)$ as a bounded subset of ${\cal C\cal G}(S_i)$, and we continue to use the 
notation ${\rm pr}_V$ to denote the subsurface projection into a subsurface $V$. 
\begin{enumerate}
\item $d_{{\cal C\cal G}(S_i)}(\mu(Y_i),x_i)\geq j$ for $1\leq i\leq \ell$.
\item $\frac{d_{{\cal C\cal G}(S_i)}(\mu(Y_i),x_i)}{d_{{\cal C\cal G}(S_1)}(\mu(Y_1),x_1)}\in 
[(1-\frac{1}{m}) \frac{a_i}{a_1},(1+\frac{1}{m})\frac{a_i}{a_1}]$ for all $1\leq i\leq \ell$.
\item For any subsurface $V$ of $\cup_{i>\ell}S_i$, it holds
\[d_{{\cal C\cal G}(V)}({\rm pr}_V(\mu(Y)), {\rm pr}_V(\mu(X)))< \frac{1}{m} d_{{\cal C\cal G}(S_1)}(\mu(Y_1),x_1).\]
\item For each $i\leq \ell$ a geodesic in ${\cal C\cal G}(S_i)$ connecting $x_i$ to 
$\mu(Y_i)$ can be extended to a $p$-quasi-geodesic in ${\cal C\cal G}(S_i)$ whose endpoint is contained
in the ball of radius $e^{-\ell}$ about $\xi_i$ in $\partial {\cal C\cal G}(S_i)$, where the metric on 
$\partial {\cal C\cal G}(S_i)$ is the Gromov distance $d_{x_i}$ constructed from the basepoint $x_i$.
\end{enumerate}

The following statement is completely analogous to Lemma \ref{closure}, and the proof of Lemma \ref{closure}
carries over without modification and will be omitted.

\begin{lemma}\label{closure2}
For each $\xi\in {\cal E}$ the closures of the sets $W(\xi,j,\ell,m)$ in $\overline{\cal T}(S)$ define a neighborhood basis 
of $\xi$ in $\overline{\cal T}(S)$.
\end{lemma}

Using Lemma \ref{closure2}, we obtain the following analog of Corollary \ref{cor} which will be used in Section \ref{Sec:metrizability}.

\begin{corollary}\label{cor2}
There are countably many open sets $U_j\subset \overline{\cal T}(S)$ which contain a neighborhood basis for 
every $\xi\in {\cal E}$. 
\end{corollary}
\begin{proof} For each $i\leq k$ choose a countable dense subset $\xi_i^j$ of 
$\partial {\cal C\cal G}(S_i)$. Then the set 
$\{\sum_ia_i\xi_i^{j_i}\mid a_i\in \mathbb{Q},\sum_ia_i=1, j_i>0\}$ is countable and dense in 
${\cal J}(\cup_iS_i)$ by the definition of the join. 

It then suffices to show that the closures in $\overline{\cal T}(S)$ of the countably many sets 
$W(\sum_ia_i\xi_i^{j_i},j,\ell,m)$ contain a neighborhood basis for any $\xi\in {\cal E}$. That this holds true follows 
from the definition of the topology on $\overline{\cal T}(S)$ as in the proof of Corollary \ref{cor}.
\end{proof}

Recall from Section \ref{subsec:filling} that for each $i$ 
the choice of a basepoint $X_i\in {\cal T}_\epsilon(S_i)$ determines a section 
$\rho_i:{\cal P\cal M\cal L}(S_i)\to {\cal M\cal L}(S_i)$. Assume that the basepoint
$x_i\in {\cal C\cal G}(S_i)$ is a component of the pants decomposition of 
$\mu(X_i)$. 

For simplicity of notation, 
call a function $f:\mathbb{R}\to \mathbb{R}$ \emph{coarsely non-decreasing},
with control constant $q>0$, if 
we have $f(t)\geq f(s)-q$ for all $s\leq t$. 
Then for every 
projective measured geodesic lamination
$[\nu_i]$ on $S_i$ 
the function
\[t\to d_{{\cal C\cal G}(S_i)}(\Upsilon(\exp_{X_i}(t \rho_i[\nu_i])),x_i)\] 
is coarsely non-decreasing, with control constant
only depending on the complexity of $S_i$ \cite{MM99}. 
The following  was shown in \cite{H09}.

\begin{lemma}\label{continuous}
There exists a continuous 
function
\[\delta_{x_i}:{\cal T}(S_i)\to [0,\infty)\]
which is at uniformly bounded distance from the function 
$Y_i\to d_{{\cal C\cal G}(S_i)}(\Upsilon(Y_i),x_i)$.
\end{lemma}

To construct open contractible 
subsets of $\prod \mathring {\cal T}_\epsilon(S_i)$ whose small closures define 
neighborhoods of $\sum_ia_i\xi_i$ in $\overline{\cal T}(\cup_i S_i)$, 
we shall control the speed of progress in the 
curve graph of each of the surfaces $S_i$. To this end note that by Lemma \ref{continuous}, 
for every Teichm\"uller geodesic $\gamma:\mathbb{R}\to {\cal T}(S_i)$ starting at the fixed basepoint
$X_i$, the 
function $t\to \delta_{x_i}(\gamma(t))$ is coarsely non-decreasing and continuous.
We use this to construct 
a new parameterization 
of a Teichm\"uller 
geodesic starting from $X_i$ 
which encapsulates its progress in the curve graph. The construction
is based on the following
elementary observation. Here the distance between two functions 
$f,g:J\subset \mathbb{R}\to \mathbb{R}$ is defined 
as 
$\Vert f-g\Vert =\sup \{\vert f(t)-g(t)\vert \mid t\}$.

\begin{lemma}\label{monotone}
  Let $f:\mathbb{R}^n\to [0,\infty)$ be a continuous
 function whose restriction to each ray
$t\to tx$ $(x\in S^{n-1}\subset \mathbb{R}^n)$ is 
coarsely non-decreasing, with fixed control constant $q>0$. 
Then 
\[u=\inf\{g\mid g\geq f,g \text{  continuous, non-decreasing on rays}\}\]
is non-decreasing on rays, continuous
and at distance at most $q$ from $f$. 
\end{lemma}
\begin{proof}
For $x\in S^{n-1}$ and $t\geq 0$ put 
\[u(tx)=\max\{f(sx)\mid s\leq t\}.\] 
This makes sense since $f$ is continuous. By definition, $u$ is non-decreasing on rays, 
$u\geq f$ and $u-f\leq q$ as $f$ is coarsely non-decreasing.

Since $f$ is continuous, it is also immediate that $u$ is continuous.
This shows the lemma.
 \end{proof}

Let $f_{i}$ be the function on
$T_{X_i}^*{\cal T}(S_i)\sim \mathbb{R}^{m_i}$ constructed in 
Lemma \ref{monotone} from the function
$\delta_{x_i}\circ \,\exp_{X_i}$.
For each $[\nu]\in {\cal P\cal M\cal L}(S_i)$ the restriction of the function 
$f_i$ to the ray $t\rho_i[\nu]$ $(t\in (0,\infty))$ 
is non-decreasing, but it may be constant on
arbitrarily large intervals. However, by replacing $f_i$ by $f_i+\alpha_i$
where $\alpha_i(t\rho_i[\nu])=a(t)$ for a smooth strictly increasing function
$[0,\infty)\to [0,1)$, 
we may assume that  the function $f_i$
has the following properties.
\begin{enumerate}
\item The function $f_i:T^*_{X_i}{\cal T}(S_i)\to [0,\infty)$ is continuous
and strictly increasing on rays starting at $0$.
\item $\sup \vert f_i-\delta_{x_i}\circ \exp_{X_i}\vert  \leq q+1$. 
\end{enumerate} 
In particular, if $f_i \vert \{t\rho_i[\nu] \mid t\in (0,\infty)\}$ 
is unbounded, then 
$f_i\vert \{t\rho_i[\nu]\mid t\}$ is a homeomorphism onto $[0,\infty)$.

Put $\tau [\nu]=\sup\{f_i(t\rho_i[\nu])\mid t\}$. Note that 
$\tau [\nu]=\infty$ if the support of the geodesic lamination $[\nu]$ on $S_i$ fills $S_i$.

Since $f_i$ is continuous and its restriction to each ray 
$\{t\rho_i[\nu]\mid t\geq 0\}$ is a homeomorphism onto $[0,\tau[\nu])$,  
it can be inverted. 
We then can define a function 
$g_{[\nu]}$ on $[0,\tau [\nu])$ 
by 
\[g_{[\nu]}(t)=\{s\mid f_i(s\rho[\nu])=t\}.\]
Using this function, we obtain a parameterization 
$t\to \hat \gamma_ {[\nu]}(t)$ of the Teichm\"uller geodesic 
$t\to \exp_{X_i}(t\rho_i[\nu])$ on the interval $[0,\tau [\nu])$ 
by defining 
\begin{equation}\label{hatgamma}
\hat \gamma_{[\nu]}(t)=\exp_{X_i}(g_{[\nu]}(t)\rho_i[\nu]).\end{equation}
With this definition, we know that
$\vert d_{{\cal C\cal G}(S_i)}(x_i,\hat \gamma_{[\nu]}(t))-t \vert \leq b$
where $b>0$ is a universal constant not depending on $t$ or $i$.

By reordering of components, consider $\xi=\sum_{i=1}^\ell a_i\xi_i\in 
{\cal X}(\cup_iS_i)$ where $1\leq \ell \leq k$, $a_i>0$ and
$\xi_i\in \partial {\cal C\cal G}(S_i)$ 
for all $i$. 
For $1\leq i\leq \ell$ we apply the construction which was carried out in Lemma \ref{descend}:
let $V^1_i\supset V^2_i\supset \cdots$ be a
closed descending chain of contractible neighborhoods of the polytope
$P(\xi_i)$ of projective measured geodesic laminations
supported in $\xi_i$ in the sphere ${\cal P\cal M\cal L}(S_i)$
of projective measured geodesic laminations on $S_i$. 
If $S_i$ is an annulus, then 
by convention, ${\cal P\cal M\cal L}(S_i)$ consists of two points. 
We assume that each of the sets $V_i^j$ is homeomorphic to a closed ball and that 
for each $j\geq 1$ there exists a deformation retraction
$R_i^j:V_i^j\to V^{j+1}_i$ which maps 
$V_i^j\setminus V^{j+1}_i$ into $V^{j+1}_i\setminus V_i^{j+2}$. By definition,
$R_i^j$ equals the 
identity on $V_i^{j+1}$. 
We also may assume that there exists an increasing
sequence $m(j)\to \infty$ so that for every $i\leq \ell$ and every 
$[\nu]\in V_i^j$
the following properties are satisfied. 
\begin{enumerate}
\item 
$\tau [\nu]\geq 2m(j)$.
\item If the support of $\zeta\in V_i^j$ is minimal
and fills, and if $c$ is a shortest distance projection of 
${\rm supp}(\zeta)\in \partial {\cal C\cal G}(S_i)$ into a $p$-quasi-geodesic connecting the basepoint 
$x_i$ to $\xi_i$, then $d_{{\cal C\cal G}(S_i)}(c,x_i)\geq 2m(j)$.
\end{enumerate}
Recall to this end that $\tau[\nu]=\infty$ for every
$\nu\in P(\xi_i)$ since $\xi_i$ is minimal and filling by assumption, and 
that a shortest distance projection of ${\cal C\cal G}(S_i)$ into any
$p$-quasigeodesic connecting $x_i$ to $\xi_i$  
extends to $\partial {\cal C\cal G}(S_i) \setminus \xi_i$.

For a pair of points $X,Y\in {\cal T}(S_i)$ define
\[\hat d_{\cal T}(X,Y)=\max \{d_{{\cal C\cal G}(V)}({\rm pr}_V(\mu(X)),{\rm pr}_V(\mu(Y)))
 \mid V\}\]
where the maximum is over all subsurfaces $V$ of $S_i$ and $\mu(X),\mu(Y)$ are short markings.

\begin{theo}[Theorem B of \cite{R14}]\label{increase}
For any Teichm\"uller geodesic $\gamma:[0,\infty)\to {\cal T}(S_i)$, 
the function $t\to \hat d_{\cal T}(\gamma(0),\gamma(t))$ is coarsely non-decreasing, with control constant
not depending on $\gamma$. 
\end{theo}  
\begin{proof}
By Theorem B of \cite{R14}, 
there is 
a number $p>0$ only depending on the complexity of $S$ such that
for every subsurface $V$ of $S$,  
the image under the map ${\rm pr}_V\circ \mu$ of a Teichm\"uller geodesic $\gamma:\mathbb{R}\to 
{\cal T}(S)$ is an unparameterized $p$-quasi-geodesic in 
${\cal C\cal G}(V)$. 
This quasi-geodesic may be bounded, one-sided infinite or
two-sided infinite. Since ${\cal C\cal G}(V)$ is a hyperbolic geodesic metric space, 
this implies that 
the path $t\to {\rm pr}_V\circ \mu \circ \gamma(t)$ coarsely does not backtrack: There exists
a universal constant $q>0$ not depending on the subsurface $V$ such that 
for $0\leq s\leq t$, it holds
\[d_{{\cal C\cal G}(V)}({\rm pr}_V(\mu(\gamma(0))), {\rm pr}_V(\mu(\gamma(t)))) \geq
d_{{\cal C\cal G}(V)}({\rm pr}_V(\mu(\gamma(0))),{\rm pr}_V(\mu(\gamma(s))))-q.\] 
As the projections ${\rm pr}_V(\mu(X))$ only coarsely determine a point in the curve
graph of $V$, the distances in this formula are only coarsely well defined, but this does not
affect the validity of the estimate. 

As a consequence,
for every subsurface $V$ of $S_i$ and every Teichm\"uller geodesic $\gamma:
[0,\infty)\to {\cal T}(S_i)$ the function 
\[t\to d_{{\cal C\cal G}(V)}({\rm pr}_V(\mu(\gamma(0))), {\rm pr}_V(\mu(\gamma(t))))\]
is coarsely non-decreasing, with control constant $q$ not depending on $V$. Then the same holds
true for $\hat d_{\cal T}$. 
\end{proof}

The following proposition is the technically most involved
part of the proof of our main theorem. In its formulation, 
we denote by $\Lambda_i=\Lambda_{\epsilon,i}:{\cal T}(S_i)\to 
\mathring {\cal T}_\epsilon(S_i)$ 
a homeomorphism as constructed in 
Proposition \ref{homeomorphism}. Then 
$\Lambda_\epsilon=\prod \Lambda_{\epsilon,i}$ is a homeomorphism of
$\prod {\cal T}(S_i)$ onto $\prod \mathring {\cal T}_\epsilon(S_i)$.

\begin{proposition}\label{contractible}
Assume that 
$\xi=\sum_{i=1}^k a_i\xi_i\in {\cal X}(S)$ is such that $\sum_i a_i=1$, that 
$a_i>0$ precisely if $i\leq \ell$ 
and that $\xi_i\in \partial {\cal C\cal G}(S_i)$
for all $i$. 
For integers $j,n\geq 1$ and for $\delta >0$ 
there is an open subset $E(j,n,\delta)$ of $\prod {\cal T}(S_i)$ 
with the following property.
\begin{enumerate}
    \item $E(j,n,\delta)$ 
    is contractible. 
\item $E(j,n,\delta)\subset E(j^\prime,n^\prime,\delta^\prime)$ for 
$j\geq j^\prime, n\geq n^\prime, \delta \leq \delta^\prime$.
\item The small closures of the sets 
$\Lambda_\epsilon E(j,n,\delta)$ ($j\geq 1,n\geq 1,\delta >0$)
in $\overline{\cal T}(\cup_iS_i)$ 
define a neighborhood basis of $\xi$.
\end{enumerate}
\end{proposition} 
\begin{proof} For $i\leq \ell$ let $V_i^j$ be as above. 
For each $[\nu_i]\in V_i^j$ choose a parameterization of the geodesic 
$t\to \exp_{X_i}(t \rho_i[\nu_i])$ on $[0,\tau[\nu_i])$ as constructed 
in equation (\ref{hatgamma}) above. 
Note that by the choice of the constants 
$m(j)$, the domain of definition 
of this parameterization contains the interval
$ [0,m(j)]$, and 
the restriction of this parameterization to $[0,m(j)]$ 
depends
continuously on $[\nu_i]$. 
Denote by $\hat \gamma_{[\nu_i]}:[0,\tau [\nu_i])\to {\cal T}(S_i)$ this 
parameterization. 

Theorem \ref{increase} shows
that for any Teichm\"uller geodesic $\gamma:[0,\infty)\to {\cal T}(S_i)$, the function
$t\to \hat d_{\cal T}(\gamma(0),\gamma(t))$ is coarsely non-decreasing, with 
fixed control constant $q>0$. Put 
$\tilde d_{\cal T}(\gamma(0),\gamma(t))=
\sup_{s\leq t} \hat d_{\cal T}(\gamma(0),\gamma(s))$. By uniqueness of Teichm\"uller geodesics
between any pair of points, this defines a function ${\cal T}(S_i)\times {\cal T}(S_i)\to [0,\infty)$
which however may not be symmetric.
For any Teichm\"uller geodesic $\gamma$, the function 
$t\to \tilde d_{\cal T}(\gamma(0),\gamma(t))$ is non-decreasing.

For an $\ell$-tuple $(j_1,\dots,j_\ell)\in \mathbb{N}^\ell$ put
$m(j_1,\dots,j_\ell)=\min \{m(j_i)\mid i\}$. 
For $i\leq \ell$ and $j\geq 1$ put $\hat V_i^j =V_i^j\setminus V^{j+1}_i$.
For
$([\nu_1],\dots,[\nu_\ell])\in \hat V^{j_1}_1\times \dots \times \hat V^{j_\ell}_\ell$ 
and $\delta >0$, $n<m(j_1-1, \dots,j_\ell -1)/2$  define 
\begin{align*}
  F(n,\delta,[\nu_1],\dots,[\nu_\ell]) &
           =\{(\hat \gamma_{[\nu_1]}(t_1),\hat \gamma_{[\nu_2]}(t_2),\dots,
\hat \gamma_{[\nu_\ell]}(t_\ell),z_{\ell+1},\dots,z_k)\in 
\prod {\cal T}(S_i)\mid \\
t_i\geq n, & \quad
\vert  t_i/t_1-a_i/a_1\vert  
\leq \delta  \text{ if }t_i < m(j_1-1, \dots, j_\ell-1) \, \text{  for }  i\leq \ell, \\
&
\tilde d_{{\cal T}(S_i)}(X_i,z_i)<\delta t_1 \text{ for }i\geq \ell+1\}.\end{align*}
It is important to note that the constraints on the relation between the points 
in the components of a tuple in the set $F(n,\delta,[\nu_1],\dots,[\nu_\ell])$ become
stronger as the components $[\nu_i]$ get close to $P(\xi_i)$ as measured by the 
nested sequence $V_i^j$, and that the constraint
is determined by the component in the tuple which is furthest away from $P(\xi_i)$. 

\smallskip\noindent
{\bf Claim 1:} 
The set 
$E(j,n,\delta)=\cup_{[\nu_i]\in V^j_i}F(n,\delta,[\nu_1],\dots,[\nu_\ell])$
is contractible for every $n\leq m(j)$.

\smallskip\noindent
\emph{Proof of Claim 1.}
Note first that if $(z_1,\dots,z_\ell,z_{\ell+1},\dots,z_k)\in 
E(j,n,\delta)$ then the same holds true for 
$(z_1,\dots,z_\ell,z^\prime_{\ell+1},\dots,z_k^\prime)$ for any
$z_i^\prime$ which is contained in the Teichm\"uller geodesic connecting
$X_i$ to $z_i$ and all $i\geq \ell+1$. Thus retracting component wise the last
$k-\ell$ components $z_i$ to the basepoint $X_i$ $(i\geq \ell+1)$ along the unique Teichm\"uller geodesic connecting
$X_i$ to $z_i$ 
and keeping the remaining components 
fixed defines a retraction of $E(j,n,\delta)$ to 
$E(j,n,\delta)\cap \{(z_1,\dots,z_k)\mid z_i=X_i\text{ for }\ell+1\leq i\leq k\}.$
In particular, in the remainder of the construction, it suffices to assume that 
$\ell=k$.

Next observe that $E(j+1,n,\delta)\subset E(j,n,\delta)$ for all $j,n,\delta$.
We construct a homotopy of $E(j,n,\delta)$ into
its subset $E(j+1,n,\delta)$ as follows.

The set
\begin{equation*}
 S(m(j-1)) =
 \{(t_1,\dots,t_{k})\in [n,\infty)^k\mid 
\vert t_i/t_1- a_i/a_1\vert \leq \delta
  \text{ if } t_i< m(j-1) \, \forall i\}\end{equation*}
admits a deformation retraction onto its subset
\[S(m(j))= \{(t_1,\dots,t_{k})\in [n,\infty)^k\mid  \vert t_i/t_1- a_i/a_1\vert \leq \delta
  \text{ if } t_i< m(j) \, \forall i\}.\]
Namely, define a homotopy 
$h:[0,1]\times [n,\infty)\to [n,\infty)$ by
\[h(u,t)=\begin{cases}
\min\{t(1-u+u(m(j)/m(j-1))),m(j)\} \text{ if } t < m(j) \\
t \text { if } t\geq m(j).
\end{cases}\] 
Then for any $t_1,t_2\in [n,m(j-1))$ and $u$ with $h(u,t_1)<m(j),
h(u,t_2)<m(j)$ 
we have
$h(u,t_1)/h(u,t_2)=t_1/t_2$. As a consequence, 
the map 
$(u,(t_1,\dots,t_k))\to (h(u,t_1),\dots, h(u,t_k))$ 
preserves $S(m(j-1))$, and it defines a homotopy of 
$S(m(j-1))$ into $S(m(j))$.

Composing this deformation of the domain $S(m(j-1))$ into $S(m(j))$ 
with the map 
\[(t_1,\dots,t_k)\to 
(\hat \gamma_{[\nu_1]}(t_1),\dots, \hat \gamma_{[\nu_k]}(t_k))\] defines a homotopy 
of $E(j,n,\delta)$ into its subset 
\begin{align*} \Xi= E(j,n,\delta)\cap 
\{(\hat \gamma_{[\nu_1]}(t_1),\dots \hat \gamma_{[\nu_k]}(t_k))\mid 
\vert t_i/t_1- a_i/a_1\vert \leq \delta \text{ if } t_i< m(j) \,\forall i\}.
\end{align*} 

The deformation retractions $R_i^j:[0,1]\times V_i^j\to V_i^j$ of 
$V_i^j$ onto $R_i^j(V_i^j\times \{1\})=V^{j+1}_i$ 
induce a deformation retraction  
\[R^j: [0,1]\times 
V^j_1 \times \cdots  \times  V^j_k\to 
V^{j}_1 \times \cdots  \times V^{j}_k\]
onto 
$V^{j+1}_1 \times  \cdots  \times V^{j+1}_k$ by applying the deformation retractions 
$R^j_i$ component wise. 
Since for each $i$, the image of $V^j_i\setminus V^{j+1}_i$ is contained 
in $V^{j+1}_i\setminus V^{j+2}_i$, and $R^j_i$ equals the identity on $V_i^{j+1}$,  
we obtain a deformation retraction
of $\Xi$ onto its subset $E(j+1,n,\delta)$ by defining 
\[(s,(\gamma_{[\nu_1]}(t_1),\dots, \gamma_{[\nu_k]}(t_k))) \to  
(\gamma_{R_1^j(s,[\nu_1])}(t_1),\dots, \gamma_{R^j_k(s,[\nu_k])}(t_k)).\]
The composition of these two homotopies 
yields a homotopy 
of $E(j,n,\delta)$ into
$E(j+1,n,\delta)$. 

Now $\cap_j E(j,n,\delta)=
\cup_{[\nu_i]\in P(\xi_i )}F(n,\delta,[\nu_1],\dots, [\nu_k])$, and as $P(\xi_i)$ is 
contractible for all $i$, this
set is contractible as well. Concretely, for tuples 
$([\nu_1],\dots,[\nu_k])\in \prod P(\xi_i)$, 
we have $m(j)=\infty$ and 
the constraint on the time parameters $t_i$ in the definition of the set
$F(n,\delta,[\nu_1],\dots,[\nu_k])$ holds for all $t_i\geq n$. Since 
each of the sets $P(\xi_i)$ are contractible, we can define a retraction of 
$\cap_j E(j,n,\delta)$ onto $F(n,\delta,[\nu_1],\dots, [\nu_k])$ for a fixed choice
of $[\nu_i]\in P(\xi_i)$ and all $i$, keeping time parameters $t_i$ fixed. We are then left with
contracting the time parameters, which is easily possible by the definition of the time constraint. 
This completes the proof of the claim.
\hfill $\blacksquare$
\smallskip

So far we constructed from a tuple of 
contractible neighborhoods $V^j_i$ $(i=1,\dots,k)$ and numbers $j>0,\delta >0$ a contractible 
subset $E(j,n,\delta)$ 
of ${\cal T}(S)=\prod {\cal T}(S_i)$. We aim at using these sets to construct contractible 
neighborhoods of $\xi$ in $\overline{\cal T}(\cup_iS_i)$. 

\smallskip\noindent
{\bf Claim 2:}
For fixed $(j,n,\delta)$, 
if $X^u\subset 
\prod {\cal T}_\epsilon(S_i)$
is a sequence converging to $\xi$, then
$X^u\in \overline{\Lambda_\epsilon E(j,n,\delta)}$ for large enough $u$.
\smallskip

\noindent
\emph{Proof of Claim 2:} 
Let $X^u=(X_1^u,\dots,X_k^u) \subset \prod {\cal T}_\epsilon(S_i)$ be a sequence converging to $\xi$.  
Let $j>0,n>0, \delta >0$ be fixed. 
We show first that $X^u\in E(j,n,\delta)$ for large enough $u$.

For $i\leq \ell$ let $[\nu_i]\in {\cal P\cal M\cal L}(S_i)$ be such that ${\rm supp}([\nu_i])=\xi_i$. 
Let $\hat \gamma_i:[0,\infty)\to {\cal C\cal G}(S_i)$ be the $p$-quasi-geodesic constructed
as a reparameterization of the unparameterized quasi-geodesic 
$\gamma_i(t)=\Upsilon(\exp_{X_i} t[\nu_i])$ as before
and let $\Pi_i:{\cal C\cal G}(S_i)\to \hat \gamma_i$ be
a shortest distance projection. We then have
\begin{equation}\label{ratio}
  d_{{\cal C\cal G}(S_i)}(\Pi_i(\Upsilon(X_i)),\Pi_i(\Upsilon(X_i^u))) /
d_{{\cal C\cal G}(S_1)}(\Pi_1(\Upsilon(X_1)),\Pi_1(\Upsilon(X_1^u)))\to a_i/a_1.\end{equation}
Furthermore, for $j\geq \ell+1$
it holds
\begin{equation}\label{hatd}
\hat d_{\cal T}(X_j,X_j^u)/\min_{i\leq \ell}d_{{\cal C\cal G}(S_i)}
(\Pi_i(\Upsilon(X_i)),\Pi_i(\Upsilon(X_1^u)))\to 0.\end{equation}

Let $[\eta_i^u]\in {\cal P\cal M\cal L}(S_i)$ and $t_i^u\geq 0$ be such that 
$X_i^u=\hat \gamma_{[\eta_i^u]}(t_i^u)$. Then for all $i\leq \ell$, we have 
$t_i^u\to \infty$ $(u\to \infty)$, moreover 
by Lemma \ref{descend} and its proof,
it holds $[\eta_i^u]\to P(\xi_i)$ $(u\to \infty)$. Thus 
for large enough $u$ and all $i\leq \ell$, we have 
$[\eta_i^u]\in V_{j}^i$.
As $t_i^u\to \infty $ $(u\to \infty)$ for all $i$, equation (\ref{hatd}) shows that 
$\hat d_{\cal T}(X_i,X_i^u) < \min_{m\leq \ell} \delta t_m^u/2$
for sufficiently large $u$ and all $i\geq \ell +1$.
By the definition of the set $E(j,n,\delta)$, this implies that 
$X^u\in E(j,n,\delta)$ for large enough $u$ if and only if 
this holds true for 
$(X_1^u,\dots,X_\ell^u,X_{i+1},\dots,X_k)$ (here as before, $X_i$ is the basepoint). 
Consequently it follows as in the beginning of this proof  that 
it suffices to assume that $a_i>0$ for all $i\leq k$, in other words, that 
$\ell=k$. 

Now invoking the condition (\ref{ratio}) and the fact that 
$t_i^u\to \infty$ for all $i$, for all sufficiently large $u$ we have
$t_i^u>2n$ and 
\[ \vert d_{{\cal C\cal G}(S_i)}(\Pi_i(\Upsilon(X_i)),\Pi_i(\Upsilon(X_i^u)))/
d_{{\cal C\cal G}(S_1)}(\Pi_1(\Upsilon(X_1)),\Pi_1(\Upsilon(X_1^u)))-a_i/a_1\vert 
<\delta/2\]
for all $i$ 
which together with the definitions implies that $X_u\in E(j,n,\delta)$.

We are left with observing that in fact $X^u\in \overline{\Lambda_\epsilon(E(j,n,\delta))}$. 
However, the map $\Lambda_\epsilon=\Lambda_{1,\epsilon}\times \cdots \times \Lambda_{k,\epsilon}$ 
is coarsely $\Upsilon$-invariant for each $i$.
As the defining properties of the sets $E(j,n,\delta)$ only depend on 
distances in the curve graph of the surfaces $S_i$, we conclude that 
$X^u\in \overline{\Lambda_\epsilon(E(j,n,\delta))}$ for large $u$. 
\hfill $\blacksquare$

Claim 2 shows that each of the sets $\overline{\Lambda_\epsilon E(j,n,\delta)}$ is the intersection
with ${\cal T}_\epsilon(S)$ of a neighborhood of $\xi$. 
By the definition of the \emph{small} closure of a subset of 
${\cal T}(S)$, the small closures of 
the sets $\Lambda_\epsilon(E(j,n,\delta))$ are then neighborhoods of $\xi$ as well.

To complete the proof of the proposition it remains to show that the small closures
$\overline{\Lambda_\epsilon E(j,n,\delta)}_{\rm small}$ 
of the sets $\Lambda_\epsilon E(j,n,\delta)$
form a neighborhood basis of $\xi$ in $\overline {\cal T}(\cup_iS_i)$. 
By Proposition \ref{topology2} and 
Proposition \ref{topsmallbd}, 
the space $\overline{\cal T}(\cup_iS_i)$ is a compact Hausdorff space. 
Thus the complement of an open neighborhood $V$ of $\xi$ is compact and does not 
contain $\xi$ and hence since the sets $E(j,n,\delta)$ are nested, 
we only have to show that the intersection
$\cap_{j,n,\delta} \overline{\Lambda_\epsilon E(j,n,\delta)}
\supset \cap_{j,n,\delta} \overline{\Lambda_\epsilon E(j,n,\delta)}_{\rm small}=\{\xi\}$. 
As $\xi$ clearly is contained in this intersection, it suffices to show that 
it is unique with this property. 

To see that this is the case recall from 
the end 
of the proof of Claim 1 that $\cap_j E(j,n,\delta)=
\cup_{[\nu_i]\in P(\xi_i )}F(n,\delta,[\nu_1],\dots, [\nu_k])$. 
Furthermore, using again the definitions, the intersection
$\cap_n \overline{\Lambda_\epsilon (\cup_{[\nu_i]\in P(\xi_i )}F(n,\delta,[\nu_1],\dots,[\nu_k]))}$ is contained in ${\cal X}(S)$, 
and finally 
$\cap_\delta \cap_n\overline{\Lambda_\epsilon(\cup_{[\nu_i]\in P(\xi_i )}F(n,\delta,[\nu_1],\dots,[\nu_k]))}=\{\xi\}$ which is what we wanted to
show.
\end{proof}

The following corollary summarizes the results of this subsection.

\begin{corollary}\label{basis10}
Each $\xi=\sum_ia_i\xi_i\in {\cal E}$ 
admits a countable neighborhood basis 
in $\overline{\cal T}(\cup_iS_i)$ consisting of 
small closures of open contractible subsets of $\prod \mathring {\cal T}_\epsilon(S_i)$.
\end{corollary}
\begin{proof} By Proposition \ref{contractible}, for each $\xi=\sum_i a_i\xi_i$ with 
$\xi_i\in \partial {\cal C\cal G}(S_i)$ the countably many sets 
$E(j,n,\frac{1}{m})$ $(j,\ell,m \in \mathbb{N})$ are open and contractible, and the small closures
of their images under $\Lambda_\epsilon$ define a neighborhood basis of $\xi$ in 
$\overline{\cal T}(S)$ whose intersections with ${\cal T}_\epsilon(S)$ are small closures
of contractible open as stated in the corollary. 
\end{proof}

\subsection{Neighborhoods of arbitrary points}\label{sec:arbitrary}

In this section we  complete the proof of the first part of Theorem \ref{mainprecise}. 
The difficulty is as follows. In Section \ref{bdproduct}, 
we considered a disjoint union 
$S_1,\dots,S_k$ of subsurfaces of $S$. Denote by $S_i^*$ the surface obtained from 
$S_i$ by replacing each boundary component by a puncture. 
We constructed for any point $\sum_ia_i \xi_i\in {\cal J}(\cup_i S_i)={\cal J}(\cup_iS_i^*)$ 
(the notations are as in Section \ref{titsbd}) a neighborhood basis of $\xi$ in 
$\overline{\cal T}(\cup_iS_i^*)=\prod {\cal T}_\epsilon(S_i^*)\cup {\cal X}(S_1^*)* \cdots *{\cal X}(S_k^*)$ 
consisting of open sets whose intersections with 
$\prod {\cal T}_\epsilon(S_i^*)$ are contractible. From the description of the topology on 
$\overline{\cal T}(S)$ in Section \ref{small}, we also know how to construct from this neighborhood
basis a neighborhood basis of $\xi$ viewed as an element of ${\cal X}(S)\subset \overline{\cal T}(S)$.
However, there is no straightforward mechanism which can be applied to guarantee that the 
intersections with ${\cal T}_\epsilon(S)$ of the sets from 
this neighborhood basis can be chosen to be contractible. 

Example \ref{farey2} illustrates both this difficulty and its solution. Namely, in the case 
of the once punctured torus $S$,
when we want to construct a contractible neighborhood basis for a fixed point of a Dehn twist, 
viewed as a point in the Gromov boundary of the hyperbolic group ${\rm Mod}(S)$, then  
this task is not well adapted to the geometry  of Teichm\"uller space, which equals the
hyperbolic plane. But we can 
think  of Teichm\"uller space as foliated by geodesics which are 
forward asymptotic to this fixed point, viewed as a boundary point of the hyperbolic plane, 
 and use this foliation to construct the desired neighborhood basis. We shall use an extension of this
 strategy to arbitrary Teichm\"uller spaces to complete the proof of Theorem \ref{mainprecise}. 

Define the \emph{complexity} $\kappa(S)$ of a connected surface of 
genus $g\geq 0$ with $m\geq 0$ holes (which can be boundary components or
punctures) as
\[\kappa(S)=3g-3+m\]
if $S$ is not a sphere with two holes, that is,  an annulus. If $S$ is an annulus 
then define $\kappa(S)=0$. If $S=\sqcup_{i=1}^m S_i$ is a disjoint union of 
connected surfaces $S_i$ then define 
$\kappa(S)=\sum_i \kappa(S_i)$.

We proceed 
by induction on the complexity $\kappa(S)$ of the possibly disconnected surface $S$. 
Example \ref{uniofann} contains the case $\kappa(S)=0$, so assume that part (1) of 
Theorem \ref{mainprecise} has been established for all 
surfaces of complexity at most $k-1$ for some $k-1\geq 0$. 
Let $S$ be a possibly disconnected surface of complexity
$\kappa(S)=k$. 
By Section \ref{subsec:filling} and Section \ref{bdproduct}, we are left with constructing neighborhood bases for 
points $\xi=\sum_{i=1}^ma_i\xi_i\in {\cal X}(S)$ 
where each $\xi_i$ fills a proper subsurface $S_i$ of $S$ (which may be a connected component of $S$) 
and that furthermore there exists at least one $i$ such that $\xi_i$ does not fill a connected component of $S$.
In particular, ${\rm supp}(\xi)$ fills a (possibly disconnected) subsurface of $S$, and the boundary of this subsurface is 
non-empty and contains at least 
one non-peripheral simple closed curve $c\subset S$.

Let $c$ be such a simple closed curve. Then $S_c=S\setminus c$
is a (possibly disconnected) surface of complexity $k-1$ and (with a small abuse of 
notation) we can write $S=S_c\sqcup A_c$ where $A_c$ is the annulus with core curve $c$.
We then can view $\xi$ as an element in the geometric boundary of the 
disconnected surface $S_c^*\sqcup A_c$ where again $S_c^*$ denotes the finite type surface obtained 
from $S_c$ by replacing each boundary component by a puncture. 
Since the complexity of $S_c^*$ is at most $k-1$, 
by the induction hypothesis, the first part of Theorem \ref{mainprecise} holds true for 
$S_c^*\sqcup A_c$.

The infinite cyclic group generated by the left Dehn twist $T_c$ about $c$
equals the mapping class group of $A_c$. The stabilizer ${\rm Stab}(c)$ 
of $c$ in the mapping class group 
${\rm Mod}(S)$ fits into the exact sequence
\begin{equation}\label{dehnexact}
1 \to \langle T_c\rangle  \to {\rm Stab}(c)\to {\rm Mod}(S_c^*)\to 1,\end{equation}
however this sequence does not split in general.

Let $S^\prime$ be the connected component of $S$ containing $c$. We are interested
in analyzing the geometric relation between ${\cal T}(S^\prime)$ and 
${\cal T}(S^\prime\setminus c)$. 
To make the notations less cumbersome, let us assume for the moment that $S$ is connected,
that is, in what follows, we tacitly replace $S^\prime$ by $S$ in the notation and resume the 
case of a disconnected surface later on.

Consider the \emph{augmented Teichm\"uller space} ${\cal T}^{\rm aug}(S)$ of $S$ \cite{Wo03,Ya04}. 
This is a stratified space whose
open stratum of maximal dimension equals 
the Teichm\"uller space ${\cal T}(S)$.
For each multi-curve $\beta$ on $S$ there exists a stratum ${\cal S}(\beta)$ which equals the 
Teichm\"uller space of the \emph{noded} surface $(S\setminus \beta)^*$ obtained from $S\setminus \beta$
by replacing each boundary component by a puncture (and remembering that the punctures are 
identified at the nodes). 
This Teichm\"uller space 
is a direct product of Teichm\"uller spaces, one for each component of $S\setminus \beta$.
The strata in the boundary of 
${\cal S}(\beta)$ correspond to multi-curves containing $\beta$ as a subset.

To an essential (that is, non-peripheral) 
 simple closed curve $c$ on the finite area hyperbolic surface $Y\in {\cal T}(S)$ we can associate not only 
its hyperbolic length $\ell_Y(c)$ 
but also its \emph{extremal length} $e_Y(c)$. By a result of Maskit \cite{Ma85}, if $\ell_Y(c)$ is small 
then $e_Y(c)$ is uniformly proportional to $\ell_Y(c)$: it holds $\ell_Y(c)/e_Y(c)\to \pi$ locally uniformly 
in $Y$ as $\ell_Y(c)\to 0$, with precise global ratio bound in the region in ${\cal T}(S)$ 
where $\ell_Y(c)$ is sufficiently small. 
We do not need to know the precise definition of $e_Y(c)$, all what matters
for our purpose is how it can be computed. Namely, by the Hubbard Masur theorem \cite{HM79}, 
for each $Y\in {\cal T}(S)$ there exists a unique holomorphic \emph{quadratic differential} $q_c(Y)$ on $Y$,
that is, a meromorphic section of $K_Y\otimes K_Y$ 
which defines a finite area singular flat metric on $Y$ with the following property. 

The marked Riemann surface $Y$ is glued
from an Euclidean cylinder $C$ of area one, with core curve freely homotopic to $c$, of 
circumference $w(Y)$ and height $h(Y)$, by pairwise identifying arcs of the same euclidean length 
in the boundary of $C$. Note that the cylinder $C$ is foliated by closed Euclidian geodesics which we 
call \emph{horizontal} in the sequel, referring to standard coordinates in the Euclidian plane. 
The extremal length of $c$ is then computed as 
$e_Y(c)=w(Y)/h(Y)$. The quadratic differential determined by this singular flat metric 
is called a \emph{one-cylinder Strebel differential}, 
with core curve $c$, and it represents a point in the cotangent space of ${\cal T}(S)$ at $Y$.
The map which associates to $Y\in {\cal T}(S)$ the one cylinder Strebel differential $q_c(Y)$ with core
curve $c$ is an analytic section $q_c$ of the bundle of area one quadratic differentials over ${\cal T}(S)$.
As \cite{HM79} only covers the case when $S$ is compact but we have to work with arbitrary finite 
type surfaces, 
we refer to Theorem 21.1 of \cite{St84} for a precise reference with proof.

For small $\delta <\epsilon/2\pi$ consider the set $N(c)$  of 
points $Y\in {\cal T}(S)$ so that the extremal length of $c$ for the conformal structure 
defined by $Y$ is at most $\delta$. By \cite{Ma85} we may assume that the hyperbolic 
length of $c$ for points in $N(c)$ is smaller than $\epsilon$. The set $N(c)$ can be thought of as 
a tubular neighborhood
of the stratum 
${\cal S}(c)$ of noded Riemann surfaces 
in the augmented  Teichm\"uller space ${\cal T}^{\rm aug}(S)$.
Its boundary $\partial N(c)$  
is invariant under the action of the infinite cyclic group of Dehn twists $\langle T_c\rangle$ about $c$. 
Namely, the action of  ${\rm Stab}(c)\subset {\rm Mod}(S)$ 
preserves the extremal length of $c$. 

The \emph{horocycle flow} $h_t$ acts on $q_c({\cal T}(S))$ as follows. 
For each $Y\in {\cal T}(S)$, the flow line 
$t\to h_t(q_c(Y))$ is obtained
by shearing (that is, twisting) 
the one-cylinder differential $q_c(Y)$ with constant speed along its core curve, where the 
speed parameter is chosen so that $h_1(q_c(Y))= q_c(T_c(Y))$ \cite{St84}. 
As $q_c$ is an embedding, this flow descends to a twist flow on ${\cal T}(S)$ by defining 
(with a small abuse of notation) $h_t(Y)=q_c^{-1}(h_t(q_c(Y)))$.  

To be more explicit, 
as is familiar for twisting a hyperbolic metric on $S$ along a simple closed geodesic,
one can think of this shearing operation as cutting $S$ open along a horizontal closed geodesic in the interior of 
the flat cylinder and gluing both sides back with a twist whose size is prescribed by the time parameter $t$.
As this twisting operation preserves height and circumference of the cylinder, this flow indeed 
preserves $q_c(\partial N(c))$ and hence projects to a flow on $\partial N(c)$. 
The flow lines foliate $\partial N(c)$ and are invariant under the infinite cyclic group 
$\langle T_c \rangle$. Note that the horocycle flow preserves the  \emph{criticial graph} 
of the differential, which is the marked metric graph obtained as the image of the 
boundary of the flat cylinder under the gluing operation which recovers the 
Riemann surface $Y$. If $S$ is closed then the critical graph is compact, otherwise
it can be compactified as a marked metric graph by adding the punctures. 
This compactification is connected if $c$ is non-separating, and it is disconnected if $c$ is separating. 
The diameter of each component 
of this compactification equals at most twice the circumference of the flat cylinder.

We use this and the fundamental result of Strebel (Theorem 23.5 of \cite{St84}, we refer to 
Section 2 of \cite{Zv02} for a precise account on what we need) in the next lemma.
Recall that the choice of a base marking of $S$ coarsely determines for each 
$Y\in {\cal T}(S)$ a \emph{twist parameter} $\tau(Y,c)\in \mathbb{Z}$ about $c$, unique up to an error of 
$\pm 1$, which is just the subsurface projection of $\mu(Y)$ into an annulus with core curve $c$.

\begin{lemma}\label{section}
There is a continuous surjective map 
$\Pi:\partial N(c)\to {\cal S}(c)$ and a continuous map $\sigma:{\cal S}(c)\to \partial N(c)$ with 
the following properties.
\begin{enumerate}
\item For each $Y\in \partial N(c)$, the preimage $\Pi^{-1}(\Pi(Y))$ equals the flow line
$t\to h_t(Y)$ of the horocycle flow. 
\item $\Pi\circ \sigma={\rm Id}$.
\item There exists a constant
 $b>0$ so that $\tau(\sigma(Y),c)\in [-b,b]$ for all 
    $Y\in {\cal S}(c)$.
\end{enumerate}
\end{lemma}
\begin{proof}
We begin with defining the projection $\Pi$. Let $Y\in \partial N(c)$ and consider the 
one cylinder Strebel differential $q_c(Y)$ with core curve $c$. Its critical graph 
${\cal G}(Y)$ is an embedded
graph in $S$, equipped with the restriction of the flat metric. Thus ${\cal G}(Y)$ is a metric
ribbon graph with two faces, that is, the boundary of a small tubular neighborhood of 
${\cal G}(Y)$ in $S$ has two components which correspond to the two boundary components of the 
cylinder which determines $q_c(Y)$. 

By Proposition 2.3 of \cite{Zv02}, this ribbon graph determines uniquely a meromorphic quadratic 
differential $q_c(\Pi(Y))$ 
on a marked Riemann surface $\Pi(Y)\in {\cal S}(c)$ with one node corresponding to the closed curve $c$.
The differential has a double pole 
at each of the two punctures corresponding to the node and 
perhaps a simple pole at a puncture of the surface $\Pi(Y)$ but no
other pole. The residues at the two double poles are negative real numbers and equal 
to the negative of the circumference of the cylinder defining
$q_c(Y)$. In particular, by the definition of $\partial N(c)$, these residues do not depend on $Y$.
Proposition 2.3 of \cite{Zv02} shows that the map $\Pi:\partial N(c)\to {\cal S}(c)$ is continuous. 
As $\Pi(Y)$ is constructed geometrically by attaching a one-sided infinite cylinder to the each 
face of the critical graph of the one cylinder Strebel differential $q_c(Y)$, viewed as a 
marked metric ribbon graph, 
clearly $\Pi^{-1}(\Pi(Y))$ contains the orbit of $Y$ under the horocycle flow $h_t$, and it equals this 
orbit as the orbit is uniquely determined by the (marked) critical graph of the Strebel differential. 
This shows the first part of the
lemma.

We next observe that the map $\Pi$ is surjective. 
Let $r>0$ be the circumference of the cylinders of the Strebel differentials for points in $\partial N(c)$. 
By Theorem 23.5 of \cite{St84}, given a noded Riemann surface $Z\in {\cal S}(c)$, and viewing the 
node as a pair $z_1,z_2$ of punctures, there is a unique meromorphic quadratic differential on $Z$ with 
two double poles with residue $-r$ at the punctures $z_1,z_2$ 
and such that if we denote by $D_i$ the disk domain formed by the closed trajectories surrounding 
$z_i$ $(i=1,2)$ then $Z=\cup_i \overline{D_i}$. Note to this end
that none of the components of $S\setminus c$ are twice punctured spheres, hence Theorem 23.5 of \cite{St84} can 
be applied. 
The differential may have a simple pole
at some punctures of $S\setminus c$ different from the node. 
But then the critical graph of the differential is a marked metric 
ribbon graph, and to this marked metric ribbon graph one can attach a cylinder of height 
$1/r$ to obtain a Riemann surface in ${\cal T}(S)$, equipped  
with a Strebel differential with core curve $c$, and this Riemann surface  
is contained in 
$\partial N(c)$ by construction. The resulting surface is not unique, but any
two choices of such a surface are contained in the 
same orbit of the horocycle flow as their critical graphs are marked isometric, 
and this flow line is mapped to $Z$ by the map $\Pi$. 

The construction in the previous paragraph associates to $Z\in {\cal S}(c)$ a flow line for $h_t$ in 
$\partial N(c)$ in a continuously varying fashion. To promote this construction to a map $\sigma$
with properties (2),(3) in the lemma, note that by continuity, 
for each point $z\in {\cal S}(c)$ we can 
find a neighborhood $U_z$ of $z$ and a local section $\sigma_z:{\cal S}(c)\to \partial N(c)$ 
for the projection $\Pi$ so 
that $\tau(\sigma_z(y),c)\in [-m,m]$ for all $y\in U_z$ and some fixed constant $m>0$. 
This makes sense since the ambiguity in the definition
of the function $\tau$ is at most an additive constant one, moreover $\tau$ is coarsely continuous
in the sense that values of $\tau$ at nearby points only differ by a universal additive constant. 
Using a partition of unity and the fact that the fiber of $\Pi$ is contractible, 
these local sections can be patched together to a global section with the 
properties in the lemma. 
\end{proof}

A section $\sigma$ as in Lemma \ref{section} is an embedding of 
${\cal S}(c)$ into $\partial N(c)$. This embedding can be used to construct 
a homeomorphism 
$\Sigma:{\cal S}(c)\times \mathbb{R}\to \partial N(c)$ which is 
equivariant with respect to the action of $\mathbb{Z}$ on 
$\mathbb{R}$ by translation and the action of the infinite cyclic group $\langle T_c\rangle$ of Dehn
twists about $c$ on $\partial N(c)$. As ${\cal S}(c)\times \mathbb{R} $ is just
the product of the Teichm\"uller space ${\cal T}(S_c^*)$ and the Teichm\"uller space of the annulus
with core curve $c$, we can view $\partial N(c)$ also in this way. 

The \emph{Teichm\"uller geodesic flow} $\phi_t$ 
acts on the area one Strebel differentials with core curve $c$, 
that is, on the section $q_c$, by scaling the circumference of the cylinder 
which determines $Y\in {\cal T}(S)$ by $e^{t/2}$ and its height by $e^{-t/2}$. 
Thus ${\cal T}(S)$ is foliated by 
Teichm\"uller geodesics whose co-velocities are such differentials. By the above 
discussion, the foliation is invariant under the subgroup ${\rm Stab}(c)$ of ${\rm Mod}(S)$.
Moreover, if $\gamma:\mathbb{R}\to {\cal T}(S)$ is such a Teichm\"uller geodesic, 
then the extremal length of $c$ along $\gamma$ is strictly increasing and hence the geodesic
intersects $\partial N(c)$ transversely in a single point. 
The 
Teichm\"uller geodesic $\gamma_Y$ through a point $\gamma_Y(0)=Y\in \partial N(c)$ converges 
as $t\to -\infty$ to $\Pi(Y)$ in 
the augmented Teichm\"uller space, as can easily be seen from rescaling of the 
Strebel differential to keep the critical graph, viewed as a metric ribbon graph, constant. 

The following lemma is a consequence of the article \cite{R14}.

\begin{lemma}\label{proalongray}
There exists a number $D>0$ with the following property. 
Let $Y\in \partial N(c)$; then for any not necessarily proper
subsurface $V$ of $S_c=S\setminus c$ 
we have ${\rm diam}({\rm pr}_{V}(\gamma_Y(-\infty,\infty)))\leq D$.
\end{lemma}
\begin{proof}
Let $t\to q(t)$ be the cotangent line of the geodesic $\gamma_Y$. 
In the flat metric defined by $q(t)$, 
the surface $S_c$ is degenerate, 
that is, the critical graph is a deformation retraction of $S_c$. 
 Any simple closed curve $\alpha$ in $S_c$ 
 is then homotopic to 
 a closed edge path in the critical graph $G$ of the differential, 
 and a closed edge path of minimal length is unique up to 
 parameterization and is the geodesic representing the free homotopy class of $\alpha$ for the 
 locally ${\rm CAT}(0)$-metric $q(t)$.
 
 The singular flat metric of $q(t)$ is obtained from the singular flat metric of
 $q(0)$ by multiplying the horizontal length, that is, the 
 circumference of the cylinder, with $e^{t/2}$, and the vertical length, that is, the height, with 
 $e^{-t/2}$. As a consequence, if for an essential  subsurface $V$ of $S_c$ 
 we define the size ${\rm size}_{q(t)}(V)$ 
 of $V$ with respect to the metric $q(t)$ as the $q(t)$-length of the shortest essential closed curve 
 in $V$ and if for a non-peripheral simple closed curve $\alpha\subset S_c$ 
 we denote by
 $\ell_{q(t)}(\alpha)$ the length of its $q(t)$-geodesic representative, 
 then 
 \[\log \frac{{\rm size}_{q(t)}(V)}{\ell_{q(t)}(\alpha)} \]
 does not depend on $t$. 
 From Theorem 3.1 of \cite{R14}, one deduces that the extremal length 
 of any non-peripheral simple closed curve in $S_c$ along the Teichm\"uller geodesic 
 $t\to \gamma_Y(t)$ is bounded from below by
 a universal positive constant. The same holds true for the extremal length of 
 $c$ along the ray $\gamma_Y[0,\infty)$,  
 which is exponentially increasing along the ray. 
 
 Together with the results from Section 5 of \cite{R14}, one deduces 
 that for any (not necessarily proper) subsurface $V$ of 
 $S_c$ the \emph{active interval} for $V$ is empty along $\gamma_Y(-\infty,\infty)$.  
 Theorem A of 
 \cite{R14} then shows that for each such $V$, the diameter of the projection 
 $\{{\rm pr}_V(\mu(\gamma_Y(t))) \mid t\in [0,\infty)\}$ is bounded from above by a universal 
 constant. This is what we wanted to show.
 \end{proof}

Let us move now back to the more general situation when the surface $S$ may be disconnected
and we apply the above discussion to the connected component $S^\prime$ 
of $S$ containing the simple closed curve $c$.
Since ${\cal T}(S)$ is the product of the Teichm\"uller spaces of the components of $S$, 
the foliation of ${\cal T}(S^\prime)$ induces a foliation of ${\cal T}(S)$ into geodesics which 
are constant on the components different from $S^\prime$.

Recall that the Teichm\"uller space of the annulus $A_c$ is naturally identified with the real line $\mathbb{R}$.
Start with a countable family ${\cal V}=\{V_i\mid i\}$ 
of open contractible subsets of $\mathring{\cal T}_\epsilon(S_c^*)\times \mathbb{R}$ 
whose small closures define a neighborhood basis of 
$\xi$ in $\overline{\cal T}(S_c^*\sqcup A_c)={\cal T}_\epsilon(S_c^*)\times \mathbb{R} \cup 
{\cal X}(S_c^*)*{\cal X}(A_c)$.
 Such a neighborhood basis exists since by the induction hypothesis, part (1) of Theorem \ref{mainprecise} holds
 true for $S_c\sqcup A_c$. 
 Let $\Lambda_\epsilon: {\cal T}(S_c^*)\to \mathring {\cal T}_\epsilon(S_c^*)$ be the coarsely
 $\Upsilon$-invariant homeomorphism from Corollary \ref{homeomorphism}. 
 By induction, we may in fact assume that $V_i$ is the image of 
  an open and contractible
 subset of ${\cal T}(S_c^*)\times \mathbb{R}$ under the map $\Lambda_\epsilon \times {\rm Id}$. 
 Denote by $W_i\subset \partial N(c)$ the image of this set  
 under the identification of $\partial N(c)$ with ${\cal T}(S_c^*)\times \mathbb{R}$ using the section $\sigma$.

\begin{proposition}\label{final}
Put $E_i=\{\gamma_Y(-\infty,\infty) \mid Y\in W_i\subset \partial N(c)\}$; then the sets 
$\Lambda_\epsilon(E_i)\subset \mathring {\cal T}_\epsilon(S)$ are open and contractible, and 
their small closures in $\overline{\cal T}(S)$ 
define a neighborhood basis of $\xi$ in $\overline{\cal T}(S)$. 
\end{proposition} 
\begin{proof} Since 
the Teichm\"uller geodesics determined by the one-cylinder Strebel differentials with core curve $c$
foliate ${\cal T}(S)$ and $\partial N(c)$ is transverse to these geodesics, the 
set $E_i$ admits a deformation retraction onto $W_i$. Thus since the sets 
$W_i$ are contractible,
the same holds true for the sets $E_i$ and for the sets $U_i=\Lambda_\epsilon(E_i)$.


We have to show that the small closures of the sets $U_i\subset \mathring {\cal T}_\epsilon(S)$ 
in $\overline{\cal T}(S)$ define a 
neighborhood basis of $\xi$ in $\overline{\cal T}(S)$. As $\cap_i \overline{U}_i=\{\xi\}$ since this holds true for 
$\cap_i \overline{V_i}$, this is the case if 
for any sequence $X_u\subset \mathring{\cal T}_\epsilon(S)$ converging to $\xi$, all but finitely many
$X_u$ are contained in $U_i$. 

By Lemma \ref{proalongray}, for each $Y\in \partial N(c)$ and any subsurface $V$ of $S_c^\prime$, 
the diameter of the subsurface projection ${\rm pr}_{V}(\mu(\gamma_Y(-\infty,\infty)))$ is uniformly bounded,
independent of $Y$. By the definition of the topology on $\overline{\cal T}(S)$, 
coarse $\Upsilon$-invariance of the projection $\Lambda_\epsilon$ and the choice of the neighborhoods $U_i$, 
for sufficiently large $u$ the subsurface projections of short markings of $X_u$ into 
all of the subsurfaces of $S\setminus c$ are close to the projections for points in $V_i$.
But by the above, the sets $U_i$ are constructed precisely in such a way that subsurface projections into 
subsurfaces of $S\setminus c$ are close to subsurface projections for points in $U_i$. 
As a consequence, for large enough $u$ we have $X_u\in U_i$. 

To summarize, for each $i$ the small closure $\overline{U}_{i,{\rm small}}$ in $\overline{\cal T}(S)$ 
of the set $U_i$ 
is indeed a neighborhood of $\xi$ in ${\cal T}(S)$. Hence the
sets $\overline{U}_{i,{\rm small}}$ define 
a neighborhood basis of $\xi$ in $\overline{\cal T}(S)$ as claimed in the proposition.
\end{proof}

\section{Proof of the main Theorem}\label{Sec:metrizability}


The goal of this section is to complete the proof of Theorem \ref{mainprecise}
and of the corollaries. 

\begin{proposition}\label{metrizable}
$\overline{\cal T}(S)$ is metrizable.   
\end{proposition}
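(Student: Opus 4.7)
The plan is to invoke the Urysohn metrization theorem. Since $(\overline{\cal T}(S),{\cal O}_0)$ is compact and Hausdorff by Proposition \ref{topsmallbd}, it is in particular normal, and hence it is metrizable as soon as one exhibits a countable basis for its topology. All of the work therefore goes into producing such a basis explicitly by combining a basis on the manifold part with a basis at infinity built from the neighborhood basis of Corollary \ref{secondcountable}.

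First I would fix a countable basis $\mathcal{B}_1$ for ${\cal T}_\epsilon(S)$, which exists since ${\cal T}_\epsilon(S)$ is a second countable manifold with corners. Since ${\cal X}(S)$ is closed in $\overline{\cal T}(S)$, the subspace ${\cal T}_\epsilon(S)$ is open, so elements of $\mathcal{B}_1$ are open in $\overline{\cal T}(S)$ as well. Next, I would enumerate ingredients needed at infinity. The collection of isotopy classes of ordered tuples $(S_1,\dots,S_k)$ of pairwise disjoint essential subsurfaces of $S$ is countable, and for each such tuple the join ${\cal J}(\cup_i S_i)$ is a finite join of separable metrizable spaces, hence separable. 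Choose a countable dense subset $D_{S_1,\dots,S_k}\subset {\cal J}(\cup_i S_i)$ whose coefficients $a_i$ are rational, and let $D=\bigcup D_{S_1,\dots,S_k}$. Now form
\[\mathcal{B}_2=\{\operatorname{int}_{\overline{\cal T}(S)}\overline{W(\xi,j,\delta)}:\xi\in D,\ j\in\mathbb{N}_{>0},\ \delta\in\mathbb{Q}\cap(0,1)\},\]
which is countable, and each element contains the associated $\xi$ by Lemma \ref{first}(1).

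To verify that $\mathcal{B}_1\cup\mathcal{B}_2$ is a basis, let $U\subset\overline{\cal T}(S)$ be open and $x\in U$. If $x\in{\cal T}_\epsilon(S)$, then the openness of ${\cal T}_\epsilon(S)$ and the choice of $\mathcal{B}_1$ produce an appropriate element of $\mathcal{B}_1$. If $x=\xi=\sum_i a_i\xi_i\in{\cal X}(S)$ with $\xi_i\in\partial{\cal C\cal G}(S_i)$, then Corollary \ref{secondcountable} supplies $j\in\mathbb{N}_{>0}$ and $\delta\in\mathbb{Q}\cap(0,1)$ with $\overline{W(\xi,j,\delta)}\subset U$. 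Setting $a=\min_i a_i$ and picking rational $\delta^\prime<\delta a^2/4$, density of $D_{S_1,\dots,S_k}$ in ${\cal J}(\cup_i S_i)$ furnishes $\xi^\prime\in D_{S_1,\dots,S_k}$ meeting the hypotheses $\max_i|a_i-a_i^\prime|\le\delta^\prime$ and $d_{c_i}(\xi_i,\xi_i^\prime)<e^{-4j}$ of Lemma \ref{first}(2). That lemma then gives $W(\xi^\prime,2j,\delta^\prime)\subset W(\xi,j,\delta)$ with $\xi$ in the interior of $\overline{W(\xi^\prime,2j,\delta^\prime)}$, so the element $\operatorname{int}\overline{W(\xi^\prime,2j,\delta^\prime)}$ of $\mathcal{B}_2$ contains $\xi$ and lies in $U$.

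The main obstacle is the compatibility issue lurking in the last step: Lemma \ref{first}(2) requires $\xi^\prime$ to have the same filled subsurfaces $S_i$ as $\xi$, so one cannot get by with a single countable dense subset of ${\cal X}(S)$ — density has to be arranged in each ${\cal J}(\cup_i S_i)$ separately, and the countability of the indexing collection of tuples is essential. Once this is set up, the remaining verification is formal: second countability combined with compact Hausdorffness yields metrizability by Urysohn.
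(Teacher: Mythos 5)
Your argument is correct and takes essentially the same approach as the paper: Urysohn's metrization theorem applied to the compact Hausdorff space $\overline{\cal T}(S)$, with second countability established by combining a countable basis for ${\cal T}_\epsilon(S)$ with the sets $W(\xi,j,\delta)$ for $\xi$ ranging over a countable dense subset of each ${\cal J}(\cup_i S_i)$ with rational coefficients, and invoking Lemma \ref{first}(2) and Corollary \ref{secondcountable} to confirm these give a neighborhood basis. The per-stratum density you flag as the ``main obstacle'' is indeed exactly how the paper arranges the construction.
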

\begin{proof}
By Uryson's theorem, a second countable Hausdorff space is
metrizable. As by Proposition \ref{topsmallbd} the space 
$\overline{\cal T}(S)$ is Hausdorff, 
it suffices to show that $\overline{\cal T}(S)$ is second countable. Since 
${\cal T}_\epsilon(S)$ is second countable, for this it suffices to show that there are 
countably many open sets in $\overline{\cal T}(S)$ which contain a neighborhood 
basis for every $\xi\in {\cal X}(S)$.

Now ${\cal X}(S)$ is a countable union of sets of the form 
${\cal J}(\cup_{i=1}^kS_i)$ where  $S_1,\dots,S_k$ is a collection of pairwise disjoint
subsurfaces of $S$. Since there only countably many such collections, 
it suffices to find for a given \emph{maximal} collection 
$S_1,\dots,S_k$ of disjoint subsurfaces of $S$ a countable collection of open sets in 
$\overline{\cal T}(S)$ which contain a neighborhood basis for each 
$\xi\in {\cal J}(\cup_{i=1}^kS_i)$.

For each $i$ let $S_i^*$ be the surface obtained from $S_i$ by replacing each boundary component 
by a puncture if $S_i$ is different from an annulus, and put $S_i^*=S_i$ if $S_i$ is an annulus. 
For each $i$ consider the marking graph ${\cal M}(S_i^*)$ of $S_i^*$, which can be thought of 
as the marking graph of $S_i$ but forgetting information on the boundary components. 
There is a 
coarsely well defined  projection
${\rm pr}_{S_i}:{\cal T}_\epsilon(S)\to {\cal M}(S_i^*)$ 
which associates to $Y\in {\cal T}_\epsilon(S)$ the projection of a short marking 
for $Y$ to a marking on 
$S_i^*$. Put
\[{\rm pr}_{\cup_iS_i}(Y)=({\rm pr}_{S_1}(Y),\dots, {\rm pr}_{S_k}(Y))\in \prod {\cal M}(S_i^*).\]

Corollary \ref{cor2} shows that there is a countable collection 
$\{W(\sum_i a_i\xi_i^{j_i},j,\ell,m)\mid a_i,j,\ell,m\}=\{U_j\mid j\}$ of 
subsets of ${\cal T}_\epsilon(\cup_iS_i^*)$ which contain a neighborhood basis for 
every $\xi\in {\cal J}(\cup_iS_i)={\cal J}(\cup_iS_i^*)\subset {\cal X}(\cup_iS_i^*)\subset {\cal X}(S)$, and 
points in these sets are 
uniquely determined by their short markings. Then the preimages under the projection 
${\rm pr}_{\cup_iS_i}$ of 
these sets are countably many subsets of ${\cal T}_\epsilon(S)$. 

If we denote by $V_j\subset {\cal T}_\epsilon(S)$ the set determined in this way by the subset 
$U_j$ of ${\cal T}_\epsilon(\cup_iS_i)$, 
then by the definition of the topology on 
$\overline{\cal T}(S)$, the closures of the sets $V_j$ in $\overline{\cal T}(S)$ 
are neighborhoods of points of ${\cal J}(\cup_iS_i)$ in $\overline{\cal T}(S)$, 
and hence the same holds true for their interiors. 
As the collection of subsurfaces $S_i$ of $S$ was chosen to be maximal and the sets $U_j$ contain
a neighborhood basis for every $\xi\in {\cal J}(\cup_iS_i)$, the sets $V_j$ then determine a neighborhood
basis in $\overline{\cal T}(S)$ for the points in ${\cal J}(\cup_iS_i)$. 
This completes the proof of the proposition.
\end{proof}

As a consequence of Proposition \ref{metrizable}, the pair $(\overline{\cal T}(S),{\cal X}(S))$ is a pair of 
compact metrizable spaces, with ${\cal X}(S)$ nowhere dense in $\overline{\cal T}(S)$. 

Recall that the \emph{covering dimension} of a topological space $X$ is 
the minimum of the numbers $n\geq 0$ so that the following holds true. Any open cover
${\cal U}$ of $X$ has a refinement ${\cal V}$ so that a point in $X$ is contained in at most
$n+1$ of the sets $V\in {\cal V}$. With this terminology, the covering dimension of 
$\mathbb{R}^n$ is $n$, and hence the covering dimension of any subset of $\mathbb{R}^n$ 
equipped with the subspace topology is at most $n$. In particular, the covering dimension 
of ${\cal T}(S)$ equals $6g-6+2m$. 

The following result relies on work of Gabai \cite{Ga14}, see also \cite{BB19}. 

\begin{proposition}\label{covX}
The covering dimension of ${\cal X}(S)$ is finite
\end{proposition}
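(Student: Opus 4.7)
The plan is to bound the covering dimension of ${\cal X}(S)$ by expressing it as a countable union of closed subsets of uniformly bounded dimension, and then invoking the countable sum theorem for dimension in separable metric spaces (${\cal X}(S)$ is metrizable by Theorem \ref{thm:metrizable}, hence separable metric, so the countable sum theorem applies).

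First I would invoke Gabai's bound (Proposition 16.3 of \cite{Ga14}): for every essential subsurface $S_0\subseteq S$, the Gromov boundary $\partial {\cal C\cal G}(S_0)$ has finite covering dimension, bounded in terms of the complexity of $S_0$ and hence by a constant $D=D(S)$ depending only on $S$. For an annulus $A(c)$, the boundary ${\cal J}(A(c))=\{c^+,c^-\}$ has dimension zero. So every factor in every join ${\cal J}(\cup_i S_i)$ has dimension at most $D$.

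Next I would use the classical join formula: if $X,Y$ are compact metric spaces, then $\dim(X*Y)\leq \dim X+\dim Y+1$. Since the number of pairwise disjoint essential (non-peripheral, non-pants) subsurfaces of $S$ is bounded by a constant $K=K(S)$ (controlled by $3g-3+m$ plus a contribution from annular pieces), the join
\[ {\cal J}(S_1\cup\cdots\cup S_k)=\partial{\cal C\cal G}(S_1)*\cdots *\partial{\cal C\cal G}(S_k) \]
has covering dimension at most $N:=K(D+1)-1$ for every admissible collection $S_1,\dots,S_k$. Since each join is compact (a join of Gromov boundaries of curve graphs, each of which is compact metric), and by Proposition \ref{top2} embeds into the Hausdorff space ${\cal X}(S)$, each ${\cal J}(\cup_i S_i)$ is a closed subset of ${\cal X}(S)$.

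Finally I would observe that the set of isotopy classes of finite collections of pairwise disjoint essential subsurfaces of $S$ is countable, so ${\cal X}(S)$ is a countable union of closed subsets of covering dimension at most $N$. The countable sum theorem for dimension in separable metric spaces then yields $\dim {\cal X}(S)\leq N<\infty$. The only step that requires a bit of care is the bookkeeping needed to produce the uniform bound $N$: one must check that the number of components of a disjoint subsurface collection, including the annuli that must be appended to form a maximal decomposition, is bounded by a constant depending only on $S$. This is straightforward since a disjoint collection of essential annuli in $S$ has at most $3g-3+m$ members (being disjoint, their core curves form a subset of a pants decomposition), and the number of non-annular pieces is also bounded by $3g-3+m$.
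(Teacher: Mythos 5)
Your proposal has a genuine gap at a crucial step: you assert that each factor $\partial {\cal C\cal G}(S_i)$ is a \emph{compact} metric space, and hence that the joins ${\cal J}(\cup_i S_i)$ are compact and therefore closed in ${\cal X}(S)$, so that the countable sum theorem for closed sets applies. But the Gromov boundary of the curve graph of a surface of positive complexity (the ending lamination space) is \emph{not} compact: the curve graph is hyperbolic but not proper, and $\partial{\cal C\cal G}(S_0)$ is well known to be non-compact — for instance $\partial{\cal C\cal G}(S_0)$ for the once-punctured torus is the set of irrationals in the Farey-graph picture, and Gabai's own work describes these spaces as Nöbeling-type spaces, not compacta. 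Consequently the sets ${\cal J}(\cup_i S_i)$ are in general neither compact nor closed in the compact space ${\cal X}(S)$ (in particular $\partial{\cal C\cal G}(S)$ itself is not closed — a sequence of filling laminations can degenerate to a non-filling lamination and its limit in ${\cal X}(S)$ lives outside $\partial{\cal C\cal G}(S)$), and the closed-countable-sum theorem cannot be applied to this decomposition.

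The paper circumvents precisely this difficulty. It proceeds by induction on complexity and writes ${\cal X}(S)$ as a disjoint union $\partial{\cal C\cal G}(S)\sqcup {\cal Y}$, where ${\cal Y}$ is the union over \emph{proper} disjoint subsurface collections of the joins ${\cal X}(S_1)*\cdots*{\cal X}(S_p)$. Crucially these are built from the full geometric boundaries ${\cal X}(S_i)$, which \emph{are} compact (by Proposition \ref{compactbd}), so each such join is closed in ${\cal X}(S)$; the union ${\cal Y}$ is then $\sigma$-compact, its dimension is bounded via the countable-union theorem plus the induction hypothesis plus the join-dimension estimate, and the non-closed piece $\partial{\cal C\cal G}(S)$ is handled separately by Gabai's bound. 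Finally the two pieces are combined not by a sum theorem but by the Uryson--Menger decomposition inequality $\dim(A\cup B)\leq \dim A+\dim B+1$. Your argument correctly identifies Gabai's bound and the join-dimension formula as the right ingredients, but it misses the need to replace $\partial{\cal C\cal G}(S_i)$ by the compact ${\cal X}(S_i)$ in the decomposition, the induction on complexity, and the switch from a sum theorem to the Uryson--Menger inequality for the final step.
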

\begin{proof}
If $S$ is an annulus then ${\cal X}(S)$ consists of two points and there is nothing to show.
Consider next a four-holed sphere or a one-holed torus $S$. By Example \ref{oncepuncturedtorus} and 
Example \ref{farey}, 
the geometric boundary as a topological space
is homeomorphic to the Gromov boundary of the hyperbolic group ${\rm PSL}(2,\mathbb{Z})$.
Since the group ${\rm PSL}(2,\mathbb{Z})$ is  
virtually free, the  
boundary ${\cal X}(S)$ of $S$ is a Cantor set, which has covering dimension zero.

Let $X$ and $Y$ be compact spaces with covering dimensions $m,n$. We claim that the covering 
dimension of the join $X*Y$ is at most $m+n+1$. To see that this is the case recall 
that $X*Y$ is the quotient of $X\times Y\times [0,1]$ under an equivalence relation $\sim$ which 
is only nontrivial on $X\times Y\times \{0\}$ and $X\times Y\times \{1\}$. The projection
$X\times Y\times [0,1]\to X\times Y\times [0,1]/\sim$ maps 
$X\times Y\times \{0\}$ to $X\times \{0\}$ and maps
$X\times Y\times \{1\}$ to $Y\times \{1\}$. Thus we have $X*Y=X\cup Y \cup C$ 
where $X\subset X*Y$ is the closed set which is 
the quotient of $X\times Y\times \{0\}$, $Y\subset X*Y$ is 
the closed set which is the quotient of 
$X\times Y\times \{1\}$, and the set $C$ is homeomorphic to $X\times Y\times (0,1)$. 

By Alexandrov's definition of dimension (see Theorem 3.4 of \cite{Dr18}), we have
${\rm dim}(A\times B)\leq {\rm dim}(A)+ {\rm dim}(B)$ and hence 
${\rm dim}(C)\leq {\rm dim}(X)+{\rm dim}(Y) +1$. 
The compact space 
$X*Y$ is the union of the closed subset $X\cup Y$ with $C$ and hence 
the theorem of Menger and Uryson (see Theorem 3.1 of \cite{Dr18}) shows that
${\rm dim}(X*Y)
=\max\{{\rm dim}(X\sqcup Y),{\rm dim}(C)\}
\leq {\rm dim}(X)+{\rm dim}(Y)+1=m+n+1$ as claimed.

Since for a disconnected surface $S=\sqcup_{i=1}^k S_i$ it holds
${\cal X}(S)={\cal X}(S_1)*\cdots *{\cal X}(S_k)$, it now suffices 
to show the proposition for connected surfaces. 
Assume by induction that the proposition was established for all connected surfaces of 
complexity at most $k-1\geq 1$. Let $S$ be a connected surface of complexity $k$. We have
${\cal X}(S)=\partial {\cal C\cal G}(S)\sqcup {\cal Y}$ (disjoint union) 
where ${\cal Y}=\cup 
{\cal X}(S_1)*\cdots *{\cal X}(S_p)$ and 
the union in the definition of ${\cal Y}$ is over all disjoint collections of proper subsurfaces 
$S_1,\dots,S_p$ of $S$. The union ${\cal Y}$ is not disjoint. 

The number 
of disjoint surfaces in one of the joins appearing in the 
definition of ${\cal Y}$  is uniformly bounded in terms of $k$.
Thus by the induction hypothesis and the above dimension estimate for joins, applied inductively,  
there exists a number $n>0$ which bounds from above the covering dimension
of each of the sets ${\cal X}(S_1)*\cdots *{\cal X}(S_p)$. 
Example \ref{join}
 shows that as subsets of ${\cal X}(S)$, the sets 
 ${\cal X}(S_1)*\cdots *{\cal X}(S_p)$ 
 are closed 
and hence compact. As a consequence, 
the subspace ${\cal Y}$
of ${\cal X}(S)$, equipped with the induced topology, 
 is a $\sigma$-compact Hausdorff space as it is 
a countable union of compact spaces. 

If $K\subset {\cal Y}$ is compact,
then $K$ can be represented as a countable 
union of the compact spaces $K\cap {\cal X}(S_1)*\cdots *{\cal X}(S_p)$.
Then the countable union theorem Theorem 3.2 of \cite{Dr18} shows that
${\rm dim}(K)=\sup \{{\rm dim}(K\cap {\cal X}(S_1)*\cdots *{\cal X}(S_p))\}$
where the supremum is over all disjoint unions of proper subsurfaces of $S$. By the above 
estimate for the dimension of the spaces 
${\cal X}(S_1)*\cdots *{\cal X}(S_p)$, we have
\[{\rm dim}(K\cap {\cal X}(S_1)*\cdots *{\cal X}(S_p))\leq n\] for any such intersection.
But then the dimension of ${\cal Y}$ is at most $n$
(see p.316 of \cite{Mu14} for a sketch of a proof). 

Following \cite{Ga14}, if $g$ is the genus of $S$ and if $m$ is the number of its punctures, then 
the covering dimension of $\partial {\cal C\cal G}(S)$
is at most $4g-5+2m$. 
Then by the Uryson-Menger formula (see Theorem 3.3 of \cite{Dr18}), the 
dimension of the compactum ${\cal X}(S)$ is at most 
\[{\rm dim}({\cal X}(S))={\rm dim}(\partial {\cal C\cal G}(S))+{\rm dim}({\cal Y})+1 \]
and hence it is finite.
\end{proof}

As a consequence, we obtain

\begin{corollary}\label{finitedim}
The pair $(\overline{\cal T}(S),{\cal X}(S))$ is a pair of spaces of finite dimension.
\end{corollary}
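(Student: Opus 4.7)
Proof proposal. By Proposition \ref{covX} the covering dimension of ${\cal X}(S)$ is already known to be finite, so the entire content of the corollary is the statement that ${\rm dim}(\overline{\cal T}(S))<\infty$. The plan is to write $\overline{\cal T}(S)$ as the union of ${\cal T}_\epsilon(S)$ and ${\cal X}(S)$ and invoke the Uryson--Menger decomposition formula, exactly as was used in the final step of the proof of Proposition \ref{covX}.

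First I would observe that $\overline{\cal T}(S)$ is a separable metric space; this is Theorem \ref{thm:metrizable} (and separability was recorded in the proof of Proposition \ref{topsmallbd}). Thus the Uryson--Menger formula (Theorem 3.3 of \cite{Dr18}) applies: for any decomposition of a separable metric space into two subsets $A,B$ one has ${\rm dim}(A\cup B)\leq {\rm dim}(A)+{\rm dim}(B)+1$. Applied to the decomposition
\[\overline{\cal T}(S)={\cal T}_\epsilon(S)\cup {\cal X}(S),\]
this gives
\[{\rm dim}(\overline{\cal T}(S))\leq {\rm dim}({\cal T}_\epsilon(S))+{\rm dim}({\cal X}(S))+1.\]

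Next I would bound the two terms on the right. The space ${\cal T}_\epsilon(S)$ is a subset of ${\cal T}(S)$, which is homeomorphic to $\mathbb{R}^{6g-6+2m}$; the monotonicity of covering dimension under taking subspaces of a separable metric space yields ${\rm dim}({\cal T}_\epsilon(S))\leq 6g-6+2m$. For the second term, Proposition \ref{covX} guarantees that ${\rm dim}({\cal X}(S))$ is finite. Combining these bounds produces a finite upper bound on ${\rm dim}(\overline{\cal T}(S))$, which, together with Proposition \ref{covX}, establishes that both members of the pair $(\overline{\cal T}(S),{\cal X}(S))$ have finite covering dimension.

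There is no real obstacle here; the corollary is a direct packaging of Proposition \ref{covX} with the metrizability result and the standard additivity bound for dimension. The only mild point requiring care is to verify that the hypotheses of the Uryson--Menger formula (namely separability and metrizability of the ambient space) are in place for $\overline{\cal T}(S)$, which is why the plan explicitly cites Theorem \ref{thm:metrizable} and the separability observation from Proposition \ref{topsmallbd}.
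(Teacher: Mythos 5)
Your proof is correct and takes essentially the same approach as the paper: decompose $\overline{\cal T}(S)={\cal T}_\epsilon(S)\cup{\cal X}(S)$ and invoke a dimension-theoretic formula for unions, together with Proposition \ref{covX} and the obvious bound on $\dim({\cal T}_\epsilon(S))$. The one small difference is the formula cited: you use the Uryson--Menger additivity bound $\dim(A\cup B)\leq \dim A+\dim B+1$, whereas the paper exploits that ${\cal X}(S)$ is closed in the compactum and ${\cal T}_\epsilon(S)$ is $\sigma$-compact to get the sharper equality $\dim(\overline{\cal T}(S))=\max\{\dim({\cal T}_\epsilon(S)),\dim({\cal X}(S))\}$ via the countable closed sum theorem; this distinction is immaterial since only finiteness is asserted.
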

\begin{proof}
By Proposition \ref{covX}, the dimension of ${\cal X}(S)$ is finite.
As the compactum $\overline{\cal T}(S)={\cal T}_\epsilon(S)\cup {\cal X}(S)$ is a union of 
two subspaces of finite dimension, with ${\cal X}(S)\subset \overline{\cal T}(S)$ closed, 
we have 
\[{\rm dim}(\overline{\cal T}(S))=\max\{{\rm dim}({\cal T}_\epsilon(S)),{\rm dim}({\cal X}(S))\}<\infty.\]
\end{proof}


The action of ${\rm Mod}(S)$ on $\overline{\cal T}(S)$ is ${\cal U}$-small if
for any open covering ${\cal U}$ of the compact space 
$\overline{\cal T}(S)$ and for any compact subset $K\subset {\cal T}_\epsilon(S)$, 
the image of $K$ under all but finitely many elements $\psi\in {\rm Mod}(S)$ is 
completely contained 
in one of the sets from the covering ${\cal U}$ and hence in the intersection of one of these
sets with ${\cal T}_\epsilon(S)$.  
Informally, the images of any 
compact subset of ${\cal T}_\epsilon(S)$ only accumulate near the boundary 
${\cal X}(S)\subset \overline{\cal T}(S)$. 

\begin{proposition}\label{prop:small}
The action of ${\rm Mod}(S)$ on ${\cal T}(S)$ is ${\cal U}$-small for every
open covering ${\cal U}$ of  $\overline{\cal T}(S)$.
\end{proposition}
\begin{proof}
Let ${\cal U}$ be an open covering of $\overline{\cal T}(S)$.
By compactness, we may extract a finite subcovering, so we may
assume that ${\cal U}$ is in fact finite, that is, we have 
${\cal U}=\cup_{0\leq i\leq m}U_i$ for some open sets
$U_i\subset \overline{\cal T}(S)$. 

Arguing by contradiction, assume that there exists 
a compact set $K\subset {\cal T}_\epsilon(S)$
and infinitely many distinct elements $\phi_j\in {\rm Mod}(S)$ such that
$\phi_jK\not\subset U_i$ for any $i\leq m$ and all $j$.

Let $X\in K$.
Since the action of 
${\rm Mod}(S)$ on ${\cal T}_\epsilon(S)$ is properly discontinuous  and
$\overline{\cal T}(S)$ is compact, 
we conclude that up to passing to a subsequence, the sequence
$\phi_jX$ converges in $\overline{\cal T}(S)$ to a point $\xi\in {\cal X}(S)$.
Now recall that as the action of ${\rm Mod}(S)$  is isometric for the 
Teichm\"uller metric and $K\subset {\cal T}_\epsilon(S)$ is compact, all diameters 
of subsurface projections 
for short markings of all points in $\phi_j(K)$ are uniformly bounded, 
independent of $j$. 
Then it follows from the definition of the topology on $\overline{\cal T}(S)$  
that $\phi_jK\to \xi$. Hence if $p\leq m$ is such that $\xi\in U_p$ the 
$\phi_jK\subset U_p$ for all sufficiently large
$j$. This is a contradiction which completes the proof of the proposition.
\end{proof}

\begin{proof}[Proof of Theorem \ref{mainprecise}]
Let $S$ be any (possibly disconnected) surface of finite type. 

By Proposition \ref{topology2}, there exists a 
topology on $\overline{\cal T}(S)={\cal T}_\epsilon(S)\cup {\cal X}(S)$ 
with property (2) in the statement of
Theorem \ref{mainprecise}. By Proposition \ref{topsmallbd}, with respect to 
this topology, the space $\overline{\cal T}(S)$ is compact and separable, and 
${\rm Mod}(S)$ acts on $\overline{\cal T}(S)$ as a group of transformations. Moreover,
by construction, ${\cal X}(S)$ is a closed subset of $\overline{\cal T}(S)$, and 
${\cal T}_\epsilon(S)$ is open and dense.

Proposition \ref{metrizable} shows that $\overline{\cal T}(S)$, equipped with  this topology, is metrizable, 
and by Corollary \ref{finitedim}, the pair $(\overline{\cal T}(S),{\cal X}(S))$ is a pair of spaces
of finite dimension. Thus $(\overline{\cal T}(S), {\cal X}(S))$ is a pair of finite dimensional
compact metrizable spaces, with ${\cal X}(S)$ nowhere dense in 
$\overline{\cal T}(S)$.

We next verify that the pair $(\overline{\cal T}(S),{\cal X}(S))$ is an
${\cal E\cal Z}$-structure for ${\rm Mod}(S)$. 
Theorem \ref{model} and Theorem \ref{ji} confirm that ${\cal T}_\epsilon(S)$ is contractible and 
locally contractible, which is requirement (1) in Definition \ref{boundary}. 
By Proposition \ref{final}, every point $\xi\in {\cal X}(S)$ admits a neighborhood basis 
consisting of sets whose intersections with ${\cal T}_\epsilon(S)$ are contractible, whence
requirement (2) in Definition \ref{boundary}.
That the action of ${\rm Mod}(S)$ on ${\cal T}_\epsilon(S)$ is properly discontinuous and cocompact
is well known and does not require an additional proof, so requirement (3) holds true.
Furthermore, by Proposition \ref{prop:small}, the action of 
${\rm Mod}(S)$ on $\overline{\cal T}(S)$ is ${\cal U}$-small, 
verifying requirement (4). That the action of 
${\rm Mod}(S)$ extends to $\overline{\cal T}(S)$ was shown in Proposition \ref{topsmallbd}.
As a consequence, $(\overline{\cal T}(S),{\cal X}(S))$ is indeed an ${\cal E\cal Z}$-structure for 
${\rm Mod}(S)$. 

To complete the proof of Theorem \ref{mainprecise}, it now suffices to verify that 
the topology on $\overline{\cal T}(S)$ is nice. If $S$ is connected, then this follows from 
Proposition \ref{curvegraphbdneigh} and Proposition \ref{final}. If $S$ is disconnected, then it 
is a consequence of Proposition \ref{contractible} and Proposition \ref{final}. 
\end{proof}

From now on, we only consider connected surfaces $S$. The following corollary completes the proof of 
part (1) of Proposition \ref{details}.

\begin{corollary}\label{covering}
We have ${\rm dim}(\partial {\cal C\cal G}(S))\leq {\rm vcd}({\rm Mod}(S))-1$.
\end{corollary}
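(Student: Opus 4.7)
The plan is to combine the ${\cal Z}$-structure from Theorem \ref{zstructure} with Bestvina's theorem to bound ${\rm dim}\, {\cal X}(S)$, and then to pass to the embedded subspace $\partial {\cal C\cal G}(S) \subset {\cal X}(S)$.

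First I would fix a torsion free finite index subgroup $\Gamma \leq {\rm Mod}(S)$. Theorem \ref{zstructure} shows that $(\overline{\cal T}(S), {\cal X}(S))$ is a ${\cal Z}$-structure for $\Gamma$, and Proposition \ref{covX} ensures that ${\cal X}(S)$ is finite dimensional. As recorded in the introduction, Theorem 1.7 of \cite{B96} applied to this ${\cal Z}$-structure identifies the \v{C}ech cohomology of the boundary with the cohomology of $\Gamma$ with $\mathbb{Z}\Gamma$-coefficients up to a dimension shift of one; combined with the standard relation between covering dimension and \v{C}ech cohomological dimension for finite-dimensional compact metric spaces, this yields
\[{\rm dim}\, {\cal X}(S) \leq {\rm cd}(\Gamma) - 1 = {\rm vcd}({\rm Mod}(S)) - 1.\]
When $m = 0$, Harer's formula gives ${\rm vcd}({\rm Mod}(S)) = 4g - 5$, and hence ${\rm dim}\, {\cal X}(S) \leq 4g - 6$.

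Next I would apply Proposition \ref{details}(1) to the one-term collection consisting of $S$ itself: this provides a topological embedding $\partial {\cal C\cal G}(S) = {\cal J}(S) \hookrightarrow {\cal X}(S)$. Since ${\cal X}(S)$ is separable and metrizable (Theorem \ref{thm:metrizable}) and of finite dimension, and covering dimension is monotonic for subspaces of separable metric spaces, we obtain
\[{\rm dim}\, \partial {\cal C\cal G}(S) \leq {\rm dim}\, {\cal X}(S) \leq 4g - 6,\]
which is the desired statement.

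The essential input is Bestvina's theorem applied to the ${\cal Z}$-structure constructed in the body of the paper; the remainder is standard dimension theory. I expect no significant technical obstacle here, since the embedding of $\partial {\cal C\cal G}(S)$ and the dimension-computing property of ${\cal Z}$-structures are already packaged in Proposition \ref{details}(1) and Theorem \ref{zstructure}. Running the same argument for $m > 0$ with Harer's formula ${\rm vcd}({\rm Mod}(S)) = 4g-4+m$ yields the more general bound ${\rm dim}\, \partial {\cal C\cal G}(S) \leq 4g-5+m$, recovering the bound attributed to Gabai after Proposition \ref{details}.
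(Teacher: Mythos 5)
Your proof is correct and follows the same route as the paper: use the ${\cal Z}$-structure from Theorem \ref{zstructure}, Bestvina's result \cite{B96} identifying the covering dimension of the boundary with ${\rm vcd}({\rm Mod}(S))-1$, Harer's computation of the vcd, and then the embedding $\partial{\cal C\cal G}(S)\hookrightarrow {\cal X}(S)$ from Proposition \ref{details}(1) together with monotonicity of covering dimension for subspaces of separable metric spaces. Your write-up is in fact slightly more careful than the paper's (you cite the metrizability result explicitly and note the $m>0$ variant), but the argument is the same.
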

\begin{proof}
Since ${\cal X}(S)$ is a ${\cal Z}$-set for a torsion free finite index subgroup $\Gamma$
of ${\rm Mod}(S)$, 
the cohomological dimension of ${\cal X}(S)$ equals 
${\rm vcd}({\rm Mod}(S))-1$ \cite{B96}. Furthermore, 
this dimension also equals the covering dimension of ${\cal X}(S)$ \cite{B96}.

Now as $\partial {\cal C\cal G}(S)$ is embedded in ${\cal X}(S)$, it is equipped with
the subspace topology. This means that any open covering of $\partial {\cal C\cal G}(S)$
is the restriction of an open covering of ${\cal X}(S)$. Such a covering then has a
${\rm vcd}({\rm Mod}(S))-1$-finite refinement and hence the same holds true for the 
refinement of the original cover of $\partial {\cal C\cal G}(S)$.
\end{proof}

The following conjecture is taken from \cite{BB19}. We believe that the 
results in this work support this conjecture.

\begin{conjecture}
For any surface $S$ of finite type, 
${\rm asdim}({\rm Mod}(S))={\rm vcd}({\rm Mod}(S))$.
\end{conjecture}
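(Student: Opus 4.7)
The conjecture decomposes as $\mathrm{vd}(\mathrm{Mod}(S))=4g-5$ (Harer's theorem, already invoked in the introduction) together with $\mathrm{asdim}(\mathrm{Mod}(S))=\mathrm{vd}(\mathrm{Mod}(S))$; only the second equality carries open content, and it further splits into a lower bound and an upper bound. For the lower bound, a torsion-free finite-index subgroup $\Gamma\leq\mathrm{Mod}(S)$ acts properly and cocompactly on the contractible manifold-with-corners $Q\subset\mathcal{T}(S)$ by Proposition \ref{homeomorphism}, so $\Gamma$ is of type $F$ with $\mathrm{cd}(\Gamma)=4g-5$. I would then invoke the principle, valid for groups admitting a $\mathcal{Z}$-structure, that $\mathrm{asdim}(\Gamma)\geq\mathrm{cd}(\Gamma)$; this can be derived by combining the Bestvina--Mess isomorphism $\check{H}^*(\mathcal{X}(S))\cong H^{*+1}_c(Q)$ from Theorem \ref{zstructure} with the standard bound of \v{C}ech cohomological dimension of the boundary by the asymptotic dimension of the group. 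Since $\mathrm{asdim}$ is a quasi-isometry invariant and $\Gamma$ has finite index in $\mathrm{Mod}(S)$, this gives $\mathrm{asdim}(\mathrm{Mod}(S))\geq 4g-5$.

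The upper bound $\mathrm{asdim}(\mathrm{Mod}(S))\leq 4g-5$ is the substantive open problem. My approach would model the hyperbolic case, where Buyalo--Lebedeva proved $\mathrm{asdim}(G)=\dim_{\mathrm{top}}(\partial G)+1$ when $G$ is Gromov-hyperbolic. Since by Proposition \ref{covX} and Corollary \ref{covering} the boundary $\mathcal{X}(S)$ has covering dimension $4g-6$, an analogue of the Buyalo--Lebedeva bound adapted to groups equipped with a $\mathcal{Z}$-structure whose boundary has suitable control on the growth of coarse covers would yield the conjectured bound. Concretely, given a scale $R$, one covers $Q$ by translates of an $R$-ball under $\Gamma$; by the null-sequence property of the $\mathcal{Z}$-structure, only finitely many translates meet any fixed compact subset of $\overline{\mathcal{T}}(S)\setminus\mathcal{X}(S)$, while the remaining balls accumulate at $\mathcal{X}(S)$. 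One then refines this coarse cover using a finite open cover of $\mathcal{X}(S)$ of multiplicity $\leq 4g-5$ (afforded by the dimension bound) together with the contractible neighborhood bases constructed in Section \ref{Sec:neighborhoodbasis}, inductively assembling local refinements across the strata $\mathcal{J}(\cup_iS_i)$ in order of increasing complexity of the underlying multicurve.

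The principal obstacle is quantitative. The neighborhood bases from Section \ref{Sec:neighborhoodbasis} are defined via Teichm\"uller geodesic depth and contractibility parameters, but there is no a priori uniform relationship between the topological ``size'' of such a neighborhood in $\mathcal{X}(S)$ and its diameter in $Q$ after truncation at depth $R$; worse, when two strata $\mathcal{J}(\cup_iS_i)$ and $\mathcal{J}(\cup_jS_j')$ share a common sub-stratum, the local refinements must be compatible as one varies the underlying multicurve. Making the assembly uniform so that the global multiplicity stays bounded by $4g-5$ rather than growing with $R$ is the heart of the matter and will likely require estimates going beyond what is established in this paper. A promising route is to compare $\mathcal{X}(S)$ with the hierarchically hyperbolic boundary of Durham--Hagen--Sisto \cite{DHS17} and import the asymptotic-dimension techniques developed in the hierarchically hyperbolic setting; the translation between the two boundaries, noted as an open problem in the introduction, is itself a prerequisite step.
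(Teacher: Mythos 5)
The statement you were asked to prove is, in the paper, explicitly labeled a \emph{Conjecture} and attributed to Bestvina--Bromberg \cite{BB19}; the paper does not claim to prove it, stating only that its results ``support'' the conjecture. So there is no paper proof to compare yours against, and any submission that claimed a complete proof would have to be treated with great suspicion. You have, to your credit, not done that: you decompose the statement honestly, correctly observe that the equality $\mathrm{vd}(\mathrm{Mod}(S))=4g-5$ is Harer's theorem and that the open content is $\mathrm{asdim}(\mathrm{Mod}(S))=\mathrm{vd}(\mathrm{Mod}(S))$, and you flag the upper bound as the genuinely open part.

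A few remarks on the substance of your sketch. Your lower bound reasoning is essentially right, though the cleanest route is not via the $\mathcal{Z}$-structure at all: Dranishnikov's theorem that $\mathrm{cd}(\Gamma)\leq\mathrm{asdim}(\Gamma)$ for groups of finite asymptotic dimension applies directly once one knows $\mathrm{asdim}(\mathrm{Mod}(S))<\infty$ (which is known), and quasi-isometry invariance of $\mathrm{asdim}$ handles the passage to the finite-index subgroup. Invoking the Bestvina--Mess duality through the $\mathcal{Z}$-structure of Theorem \ref{zstructure} is a valid alternative but adds machinery. For the upper bound, your plan to imitate Buyalo--Lebedeva is the natural one and matches the spirit of what the paper is set up for, but you are right that the paper's neighborhood bases (Section \ref{Sec:neighborhoodbasis}) come with no quantitative control linking their depth parameters to coarse diameter in $Q$, and the compatibility problem you identify across overlapping strata $\mathcal{J}(\cup_i S_i)$ is exactly where such an argument would stall. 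In short: your proposal is a reasonable research program, not a proof, and it should be framed as such --- which you largely do. The one thing to fix is the opening framing: do not call this a ``proof'' of the conjecture; present it as a strategy with an identified missing estimate.
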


We are left with showing 
Corollary \ref{concrete} and Corollary \ref{bdcurvegr}  from the introduction.

\begin{proof}[Proof of Corollary \ref{concrete}]
By Theorem \ref{main}, ${\rm Mod}(S)$ admits an 
${\cal E\cal Z}$-structure $(\overline{X}, Z)$ where $X=\overline{X}\setminus Z$
is a manifold with boundary of dimension $6g-6+2m$. Assume that $6g-6+2m\geq 5$.
By Lemma 2.3 of \cite{FL05}, there exists a new ${\cal E\cal Z}$-structure for 
${\rm Mod}(S)$ obtained by doubling $X$ along the boundary. 
By Proposition 2.1 of \cite{FL05}, this 
structure is of the form $(\overline{Y},Z)$ where $\overline{Y}$ is a manifold with 
boundary of dimension $6g-5+2m$.

Proposition 2.2 of \cite{FL05} then shows that another application of this construction
to the pair $(\overline{Y},Z)$ results in an ${\cal E\cal Z}$-structure given
by the unit ball in $\mathbb{R}^{6g-4+2m}$ and a subset $Z$ of its boundary, the 
sphere of dimension $6g-5+2m$. This is what we wanted to show.
\end{proof}

\begin{proof}[Proof of Corollary \ref{bdcurvegr}]
The corollary follows from the fact that $\partial {\cal C\cal G}(S)$ is embedded in ${\cal X}(S)$, and by Corollary
\ref{concrete} and its proof, ${\cal X}(S)$ embeds into a manifold of dimension $6g-6+2m$ and
into the sphere $S^{6g-5+2m}$.
\end{proof}

\begin{question}
What is the smallest dimension $n$ so that ${\rm Mod}(S)$ admits an ${\cal E\cal Z}$ structure on a pair
$(\mathbb{D}^n,\Delta)$ where $\Delta$ is a subset of $S^{n-1}$?
\end{question}

\bigskip

\noindent
MATHEMATISCHES INSTITUT DER UNIVERSIT\"AT BONN\\
ENDENICHER ALLEE 60, \\
D-53115 BONN, GERMANY\\

\bigskip\noindent
email: ursula@math.uni-bonn.de

\end{document}